\tikzstyle{vertex}=[circle, draw, inner sep=2pt, fill=white]
\newcommand{\vertex}{\node[vertex]}
\newtheorem{thm}{Theorem}[section]
\newtheorem{cor}[thm]{Corollary}
\newtheorem{lem}[thm]{Lemma}
\newtheorem{prop}[thm]{Proposition}
\theoremstyle{definition}
\newtheorem{defn}[thm]{Definition}
\newtheorem{rem}[thm]{Remark}
\newtheorem{exa}[thm]{Example}
\numberwithin{equation}{section}
\numberwithin{thm}{section}
\newcommand{\R}{\mathbb{R}}
\newcommand{\N}{\mathbb{N}}
\renewcommand{\S}{\mathbb{S}}
\newcommand{\T}{{\bf T}}
\newcommand{\PP}{\mathbb{P}}
\newcommand{\QQ}{\mathbb{Q}}
\newcommand{\EE}{\mathbb{E}}
\newcommand{\expec}{\mathbb{E}}
\newcommand{\cadlag}{{c\`adl\`ag\ }}
\newcommand{\dbra}[1]{[\kern-0.15em[ #1 ]\kern-0.15em]}
\newcommand{\dbraco}[1]{[\kern-0.15em[ #1 [\kern-0.15em[}
\newcommand{\dbraoc}[1]{]\kern-0.15em] #1 ]\kern-0.15em]}
\newcommand{\dbraoo}[1]{]\kern-0.15em] #1 [\kern-0.15em[}
\newcommand{\Fcal}{\mathcal{F}}
\newcommand{\Gcal}{\mathcal{G}}
\newcommand{\Hcal}{\mathcal{H}}
\newcommand{\Mcal}{\mathcal{M}}
\newcommand{\Pcal}{\mathcal{P}}
\newcommand{\Scal}{\mathcal{S}}
\newcommand{\FF}{\mathbb{F}}
\newcommand{\GG}{\mathbb{G}}
\newcommand{\HH}{\mathbb{H}}
\DeclareMathOperator{\rk}{rank}
\DeclareMathOperator{\Diag}{Diag}
\DeclareMathOperator{\ext}{ext}
\DeclareMathOperator{\vspan}{span}
\DeclareMathOperator{\sgn}{sgn}
\newcommand{\lc}{[\![}
\newcommand{\rc}{]\!]}
\newcommand{\id}{{\bf I}}
\begin{document}

\title{Semi-static completeness and \\robust pricing by informed investors\footnote{We would like to thank Monique Jeanblanc for useful comments and discussions. We also thank the associate editor and two anonymous referees for their valuable comments.}}

\author{B. Acciaio\thanks{London School of Economics and Political Science, Dept. Statistics, 10 Houghton St, WC2A 2AE London, UK. Email: b.acciaio@lse.ac.uk} \and M. Larsson\thanks{ETH Zurich, Department of Mathematics, R\"amistrasse 101, CH-8092, Zurich, Switzerland. Email: martin.larsson@math.ethz.ch}}

\date{\today}

\maketitle

\begin{abstract}
We consider a continuous-time financial market that consists of securities available for dynamic trading, and securities only available for static trading. We work in a robust framework where a set of non-dominated models is given. The concept of semi-static completeness is introduced: it corresponds to having exact replication by means of semi-static strategies.  We show that semi-static completeness is equivalent to an extremality property, and give a characterization of the induced filtration structure. Furthermore, we consider investors with additional information and, for specific types of extra information, we characterize the models that are semi-statically complete for the informed investors. Finally, we provide some examples where robust pricing for informed and uninformed agents can be done over semi-statically complete models.\\[2ex]

\noindent{\textbf {Keywords:} Semi-static completeness; Robust finance; Extreme points; Filtration enlargement; Informed pricing.}
\\[2ex]
\noindent{\textbf {MSC2010 subject classifications:} 91G10, 60G44.}
\end{abstract}

\section{Introduction}

We consider a continuous-time financial market with two types of traded instruments: securities that can be traded dynamically over time, and securities that can be traded only at time zero and then held until maturity. In this setting we study replicability of contingent claims. The central notion of the present paper is {\em semi-static completeness}, which refers to the possibility of perfectly replicating any contingent claim by trading in the two sets of available securities.

Markets with semi-static trading opportunities are typically considered in the robust and model-free paradigm in mathematical finance. According to this paradigm, rather than committing to one particular model (probability measure), one considers a whole class of models that may be non-dominated in the sense of absolute continuity of probability measures. The goal is to account for model uncertainty via a worst-case analysis over models that are deemed plausible. A large and growing literature has developed around the following informally stated problem: In a suitable framework, establish the duality formula
\begin{equation} \label{eq:intro}
\sup_{\QQ\in\Mcal} \EE_\QQ[ \Phi ] = \inf \left\{ x\in\R\colon \begin{array}{l}\text{$\Phi$ can be super-replicated by semi-static trading}\\\text{starting from initial capital $x$ } \end{array}\right\},
\end{equation}
where $\Mcal$ denotes an appropriate set of martingale measures, and $\Phi$ is a contingent claim. The crucial feature is that super-replication is required $\Mcal$-quasi-surely. Following the seminal paper by Hobson~\cite{Ho98}, this problem has been addressed by many authors; for a survey, see \cite{Ho11} and \cite{To14}. In particular, \eqref{eq:intro} has been proved in a variety of settings; see e.g.~\cite{BHP13,ABPS13,Nu14,BN15,BZ14,FH14} in discrete time, and \cite{GHT14,DS14a,DS14b,BCH14,BBKN14,BNT15,HO15,GTT15,BCHPP15} in continuous time, among many others. 

How is this related to semi-static completeness? First, in some situations the set $\Mcal$ turns out to be convex and compact; see Section~\ref{sect:ex} for some examples. It is then often possible to reduce the left-hand side of~\eqref{eq:intro} to a maximization problem over the extreme points of $\Mcal$. Our first main result extends the classical Jacod-Yor theorem on extreme points and martingale representation to the setting of semi-static trading and non-dominated models. We show that semi-static completeness is equivalent to an extremality property of the model under consideration; see Theorem~\ref{T:ext}. This result is related to work by Campi and Martini \cite{CM15}, who study extremality in a two period model with countable state space.

Second, it is clear that both sides of~\eqref{eq:intro} depend on the underlying filtration, which models the information set available to investors in the market. Indeed, as the filtration becomes larger, the martingale property becomes more restrictive. This reduces the set $\Mcal$ and decreases the left-hand side of~\eqref{eq:intro}. Similarly, a larger information set increases the number of available trading strategies, which decreases the right-hand side of~\eqref{eq:intro}. Under structural assumptions on the filtrations involved, we characterize those models that are semi-statically complete for an informed agent with access to a larger filtration; see Theorem~\ref{T:finiteljump}. Under a semi-statically complete model, the informed agent will find that the larger filtration coincides with the original one up to nullsets. The message that emerges is that the only pricing measures that remain relevant for the informed agent are those under which the two filtrations are indistinguishable.

Third, to leverage these observations, the probabilistic structure of semi-statically complete models needs to be clarified. Indeed, given the long history of the notion of completeness in mathematical finance, it seems natural to study this notion in the context of semi-static trading. Assuming that the dynamically traded securities have continuous price paths, we provide a full characterization of semi-static completeness in terms of dynamic completeness; see Theorem~\ref{T:S cont char}. Dynamically complete models have been studied extensively in mathematical finance, and their structure is well understood. Our characterization theorem can thus be viewed as a recipe by which semi-statically complete models can be constructed. Let us mention that the theorem relies crucially on a structure that we refer to as an atomic tree, which is related to the underlying filtration; see Definition~\ref{D:atomic tree}.

The rest of the paper is organized as follows. In Section~\ref{Sect:setup} the mathematical setup is given, in particular the definition of semi-static completeness. Section~\ref{Sect:JY} contains the generalization of the Jacod-Yor theorem to the semi-static setting. Section~\ref{Sect:tree} is devoted to the characterization of semi-statically complete models. In Section~\ref{S:filter} semi-static completeness is studied in relation to changes of filtration.  In Section~\ref{sect:ex} we provides some examples where the sets of martingale measures are compact, thus the results of Section~\ref{S:filter} can be used to compare robust pricing for agents with different sets of information.
Finally, the Appendix contains the proof of the main result in Section~\ref{Sect:tree}, and an extension of the Jeulin-Yor theorem needed in Section~\ref{S:filter}.

\section{Setup and notation}\label{Sect:setup}

Let $T\in(0,\infty)$ be a finite time horizon and fix a filtered measurable space $(\Omega,\Fcal,\FF)$ whose filtration $\FF=(\Fcal_t)_{0\le t\le T}$ is right-continuous. No probability measure is given a priori. Consider a financial market consisting of two types of securities: a risk-fee savings account and $m$ risky assets available for dynamic trading, as well as $n$ securities available for static trading, meaning that they can only be bought or sold at time zero and must be held until time $T$. The prices of the dynamically traded risky assets, discounted by the savings account, are modeled by a \cadlag $\FF$-adapted process $S=(S_t)_{0\le t\le T}$, $S_t=(S^1_t,\ldots,S^m_t)$. We set $S_0=0$ without loss of generality. The discounted time $T$ payoffs of the statically traded securities are represented by a set $\Psi=\{\psi_1,\ldots,\psi_n\}$ of $\Fcal_T$-measurable random variables. Without loss of generality, we fix the price at time zero of each statically traded security to be zero. Note that discrete time models are included in this framework; see Example~\ref{ex:d}.

\paragraph{Calibrated martingale measures.}

Let $\Pcal=\Pcal(\FF)$ be a fixed set of probability measures on $\Fcal_T$ (\emph{priors}). The role of the set $\Pcal$ is to identify which events are deemed relevant and which are not. Adopting the notation in \cite{BN15}, we write $\QQ\lll\Pcal$ when $\QQ$ is a probability measure on $\Fcal_T$ such that $\QQ\ll\PP$ for some $\PP\in\Pcal$. We consider the following set of {\em calibrated martingale measures}, under which the price process $S$ is a martingale and the statically traded securities are priced correctly:
\begin{align*}
\Mcal(\FF) &= \left\{\QQ\lll\Pcal \colon \begin{array}{l} S \text{ is an $\FF$-martingale under $\QQ$},\\[1ex] \EE_\QQ[\psi_i]=0\text{ and } \EE_\QQ[\psi_i^2]<\infty \text{ for all $i$} \end{array} \right\}.
\end{align*}
Thus $\Mcal(\FF)$ is the set of martingale measures emerging in a robust framework, corresponding to the set $\Pcal$ of priors; see e.g.~\cite{GHT14} and \cite{BN15}. The martingale and calibration conditions are standard in the robust (non-dominated) setting. In addition, we require square integrability of the statically traded securities. It is common in the literature to impose some integrability condition on the asset's terminal distribution; for instance, \cite{ABPS13} require the existence of some superlinear moment, and \cite{DS14a,DS14b} and \cite{HO15} assume $L^p$-integrability for some $p>1$. Here we only require $S$ to be a martingale, but will impose $L^2$ integrability on trading strategies. This will allow us to use tools which are particular to the $L^2$ structure, such as orthogonal projections.

Note that, if $\Pcal$ is chosen to be the set of {\em all} probability measures on $\Fcal_T$, then, modulo integrability conditions, $\Mcal(\FF)$ is the usual set of martingale measures considered in the model-independent framework; see e.g.~\cite{BHP13} and \cite{DS14a}.

\paragraph{Semi-static completeness and extreme points.}
We now define semi-static completeness, which is the key notion of the present paper. For a probability measure $\QQ\in\Mcal(\FF)$, we denote by $\Hcal^2(\FF,\QQ)$ the set of square integrable martingales, and for a martingale $M=(M^1,\ldots,M^d)$ we let $L^2(M,\FF,\QQ)$ denote the set of $M$-integrable processes $H$ such that $H\cdot M\in\Hcal^2(\FF,\QQ)$. Here the usual vector stochastic integral as in \cite{JS03} or \cite{SC02} is used. Following \cite{DM80}, we use the convention $\Delta M_0=M_0$ and $(H\cdot M)_0=\sum_{i=1}^d H_0^iM_0^i$. This does not affect integrals with respect to the price process $S$ since $S_0=0$, but will be important when we integrate with respect to other martingales. Note that $\FF$ is not augmented with the $\QQ$-nullsets. If the filtration $\FF$ and probability measure $\QQ$ are clear from the context, we often drop them from the notation.

\begin{defn}
We say that {\em semi-static completeness holds under $\QQ\in\Mcal(\FF)$} if any $X\in L^2(\Fcal_T,\QQ)$ can be represented as
\[
X = x + a_1\psi_1 + \cdots + a_n\psi_n + (H\cdot S)_T \qquad \QQ\text{-a.s.}
\]
for some $x,a_1,\ldots,a_n\in\R$ and $H\in L^2(S,\FF,\QQ)$.
\end{defn}

Thus semi-static completeness means that any square-integrable payoff can be replicated using a semi-static strategy. In the absence of statically traded securities, semi-static completeness corresponds exactly to the usual predictable representation property in an $L^2$ setting. The main result of Section~\ref{Sect:JY} relates semi-static completeness to extremality of measures.

\begin{defn}
An element $\QQ\in\Mcal(\FF)$ is called an \emph{extreme point} if $\QQ=\lambda\QQ^1+(1-\lambda)\QQ^2$ with $\QQ^1,\QQ^2\in\Mcal(\FF)$ and $0<\lambda<1$ implies $\QQ^1=\QQ^2=\QQ$. The set of all extreme points of $\Mcal(\FF)$ is denoted by $\ext \Mcal(\FF)$.
\end{defn}

If $\Mcal(\FF)$ is convex, and the space of probability measures is endowed with a topology under which $\Mcal(\FF)$ is compact, the Krein-Milman theorem implies that $\Mcal(\FF)$ is the closed convex hull of its extreme points. For any payoff~$\Phi$ such that $\QQ\mapsto\EE_\QQ[\Phi]$ is continuous, one can then compute its robust super-hedging price over the set of extreme points:
\begin{equation}\label{eq:supexp}
\sup_{\QQ\in\Mcal(\FF)}\expec_\QQ[\Phi]=
\sup_{\QQ\in\ext\Mcal(\FF)}\expec_\QQ[\Phi].
\end{equation}
In Section~\ref{sect:ex} we provide two examples where $\Mcal(\FF)$ is compact. Note, however, that compactness of $\Mcal(\FF)$ is {\em not} assumed in any of our subsequent results; the above remarks merely serve as one motivation for studying its extreme points.

\begin{rem}\label{rem:rssc}
Semi-static completeness is a property of a given model $\QQ\in\Mcal(\FF)$. One could also think of a \emph{robust} notion, where replication is possible under all models in $\Mcal(\FF)$ simultaneously. However, in view of Theorem~\ref{T:ext}, this is equivalent to $\Mcal(\FF)$ being a singleton. Apart from the classical case $\Pcal=\{\PP\}$, such a situation seems too restrictive to be of much interest. Furthermore, Theorem~\ref{T:ext} shows that our notion of semi-static completeness is the right one in order to characterize $\ext\Mcal(\FF)$, which is a robust object in that it does not depend on any specific choice of reference measure.
\end{rem}

\paragraph{Stable subspaces and orthogonality.}
For a fixed probability measure~$\QQ\in\Mcal(\FF)$ and a possibly multi-dimensional martingale $M=(M^1,\ldots,M^d)$, we denote by $\Scal(M)$ the closed subspace of $\Hcal^2$ given by
\[
\Scal(M) = \{ H\cdot M \colon H\in L^2(M,\FF,\QQ)\}.
\]
If $M$ is square-integrable, then $\Scal(M)$ is the smallest closed subspace of $\Hcal^2$ that contains $M^1,\ldots,M^d$ and is stable under stopping. This is usually taken as the definition of $\Scal(M)$, which however is inconvenient for us, since in particular the prices process $S$ need not be square-integrable. In this paper, we only need the fact that $\Scal(M)$ is closed in $\Hcal^2$ and stable under stopping.

Recall that there are two notions of orthogonality for square-integrable martingales: $M$ and $N$ are {\em weakly orthogonal} if $\EE_\QQ[M_TN_T]=0$, that is, if $M$ and $N$ are orthogonal with respect to the inner product on the Hilbert space $\Hcal^2$. They are {\em strongly orthogonal} if $MN$ is a martingale. To simplify notation, we often identify square-integrable martingales with their final values. In particular, we then view $\Scal(M)$ as a subspace of $L^2(\Fcal_T)$.

\section{A semi-static Jacod-Yor theorem}\label{Sect:JY}

The classical Jacod-Yor theorem relates the predictable representation property of a process~$X$ to the extreme points of the set of martingale measures for~$X$; see  \cite{JY77}. The main result of the present section is an analog of the Jacod-Yor theorem in the context of semi-static hedging. It states that the extreme points of $\Mcal(\FF)$ exactly correspond to those models which are semi-statically complete.

\begin{thm}\label{T:ext}
Let $\QQ\in\Mcal(\FF)$. The following conditions are equivalent:
\begin{enumerate}
\item\label{T:ext:1} $\QQ \in \ext \Mcal(\FF)$.
\item\label{T:ext:3} Semi-static completeness holds under $\QQ$.
\end{enumerate}
\end{thm}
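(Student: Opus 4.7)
The plan is to identify $\QQ\in\ext\Mcal(\FF)$ with triviality of the orthogonal complement, in $L^2(\Fcal_T,\QQ)$, of the subspace
\[
V := \R + \vspan(\psi_1,\ldots,\psi_n) + \Scal(S)
\]
of square-integrable payoffs reachable by semi-static hedging; closedness of $V$ follows from closedness of $\Scal(S)\subset\Hcal^2$ combined with the finite-dimensional lift by $\R$ and the $\psi_j$, so semi-static completeness under $\QQ$ is precisely $V^\perp=\{0\}$. The bridge in both directions is that a nontrivial bounded $Y\in V^\perp$ produces distinct densities $1\pm\eps Y$ that split $\QQ$ nontrivially inside $\Mcal(\FF)$, while conversely a nontrivial splitting of $\QQ$ yields such a bounded $Y$ as the difference of the two densities.

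For (ii)$\Rightarrow$(i), suppose $\QQ=\lambda\QQ^1+(1-\lambda)\QQ^2$ with $\QQ^1,\QQ^2\in\Mcal(\FF)$ and $\lambda\in(0,1)$. The densities $Z^i:=d\QQ^i/d\QQ$ are bounded, and the density martingales $N^i_t:=\EE_\QQ[Z^i|\Fcal_t]$ satisfy that $N^i-1$ is strongly orthogonal to $S$ in $\Hcal^2(\FF,\QQ)$, because $S$ is simultaneously a $\QQ$- and a $\QQ^i$-martingale. For any $X\in L^2(\Fcal_T,\QQ)$ apply semi-static completeness to write $X=x+\sum_j a_j\psi_j+(H\cdot S)_T$ and take $\QQ^i$-expectations: the $\psi_j$-term vanishes by calibration of $\QQ^i$, and the stochastic-integral term vanishes through the weak $L^2$-orthogonality $\EE_\QQ[(Z^i-1)(H\cdot S)_T]=0$ that descends from strong orthogonality of $N^i-1$ and the stable subspace $\Scal(S)$. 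Hence $\EE_{\QQ^i}[X]=\EE_\QQ[X]$ on all of $L^2(\Fcal_T,\QQ)$; testing against indicators of $\Fcal_T$-measurable sets yields $\QQ^i=\QQ$.

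For (i)$\Rightarrow$(ii), argue the contrapositive: assume $V^\perp\neq\{0\}$ and produce a bounded $Y\in V^\perp\setminus\{0\}$. Given any $Y_0\in V^\perp\setminus\{0\}$, the martingale $N^0_t:=\EE_\QQ[Y_0|\Fcal_t]$ lives in the stable subspace $\Scal(S)^\perp\subset\Hcal^2$ and is weakly orthogonal to each $\psi_j$. Approximate $N^0$ by bounded martingales inside $\Scal(S)^\perp$ (bounded elements are $\Hcal^2$-dense in any stable subspace), restore the finite-dimensional orthogonality constraints against the $\psi_j$ by subtracting an appropriate linear combination of the Kunita--Watanabe residuals $\psi_j^\perp:=\psi_j-(K^j\cdot S)_T$, and stop at a first-exit time to ensure honest boundedness of the terminal value. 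With bounded $Y$ in hand, set $d\QQ^\pm/d\QQ=1\pm\eps Y$ for $\eps\in(0,1/\|Y\|_\infty)$: the $\QQ^\pm$ are distinct probability measures with $\QQ^\pm\lll\Pcal$, satisfy $\EE_{\QQ^\pm}[\psi_j]=0$ from $\EE_\QQ[Y\psi_j]=0$, and inherit $\EE_{\QQ^\pm}[\psi_j^2]<\infty$ from bounded density. Finally $S$ remains a $\QQ^\pm$-martingale by Girsanov: strong orthogonality of the density process $1\pm\eps N$, with $N_t=\EE_\QQ[Y|\Fcal_t]$, against $S$ makes $S(1\pm\eps N)$ a $\QQ$-local-martingale, which through boundedness of $N$ together with the $\QQ$-martingale domination $|S_t|\le\EE_\QQ[|S_T||\Fcal_t]$ is a true $\QQ$-martingale. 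Thus $\QQ=(\QQ^++\QQ^-)/2$ with $\QQ^+\neq\QQ^-$, contradicting extremality. The main obstacle is the bounded-$Y$ construction; once it is secured, both implications reduce to routine applications of Girsanov and Kunita--Watanabe orthogonality against the stable subspace $\Scal(S)$.
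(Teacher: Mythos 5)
Your direction (ii) $\Rightarrow$ (i) is essentially the paper's argument and is fine, but your direction (i) $\Rightarrow$ (ii) has a genuine gap, and you have in fact correctly identified where it lives: the production of a \emph{bounded} $Y\in V^\perp\setminus\{0\}$. The proposed fix does not work. The claim that ``bounded elements are $\Hcal^2$-dense in any stable subspace'' is false in general: take a filtration with a single jump at $t=1$ revealing everything on a countable probability space, let $X\in L^2_0$ be unbounded, and consider the stable subspace generated by the martingale $X\bm 1_{\lc 1,T\rc}$; every element is a constant plus a scalar multiple of $X$, so the only bounded elements are the constants, which are not dense. Stopping at a first-exit time also does not help: $M^{\tau_k}$ stays in $\Scal(S)^\perp$ because the latter is stable, but the jump $\Delta M_{\tau_k}$ can be arbitrarily large, so $M^{\tau_k}$ need not be bounded. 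Truncating the terminal value $M_T$ does give boundedness, but the truncated martingale leaves $\Scal(S)^\perp$. Finally, re-projecting onto $\Scal(S)^\perp$ restores membership but destroys boundedness again. There is no elementary loop out of this.

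The paper sidesteps the problem by changing the duality. It defines $W$ as the set of outcomes of semi-static strategies and proves, via an $L^1$-closure lemma (Lemma~\ref{L:Wclosed}, which in turn rests on Yor's theorem that the set of terminal values of martingale stochastic integrals is closed in $L^1$), that if $X_k\in W$ with $X_k\to X$ in $L^1$ and $X\in L^2$, then $X\in W$. It then applies Hahn--Banach in the $L^1$/$L^\infty$ pairing: any continuous linear functional on $L^1(\Fcal_T)$ annihilating $W$ is represented by a \emph{bounded} $Z$, so the perturbation $d\QQ^\pm=(1\pm Z)\,d\QQ$ is well-posed with no boundedness worries. Extremality then forces $Z=0$, giving $L^1$-density of $W$, and the closure lemma upgrades this to $W=L^2$. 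In effect, the nontrivial implication ``$V^\perp\neq\{0\}$ in $L^2$ $\Rightarrow$ $V^\perp\cap L^\infty\neq\{0\}$'' that you need is \emph{equivalent} to the paper's closure lemma, so your contrapositive cannot avoid some version of Yor's theorem. If you want to repair your argument within the $L^2$ framework, you should either import the $L^1$-closure lemma and deduce the existence of a bounded $Y$ from it (which is circular with respect to what you are trying to replace), or simply switch to the $L^1$/$L^\infty$ duality as the paper does.

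As a minor point, for your (ii) $\Rightarrow$ (i) direction the paper argues more directly: it sets $Z=d\QQ^1/d\QQ$, observes $\EE_\QQ[(Z-1)X]=0$ for all $X\in W=L^2(\Fcal_T,\QQ)$ because $\EE_{\QQ^1}[X]=\EE_\QQ[X]=a_0$ for every semi-static outcome $X$, and concludes $Z=1$. Your version via density martingales and orthogonality to the stable subspace $\Scal(S)$ reaches the same conclusion but is more involved; either route is acceptable once the boundedness of $Z$ (automatic here, since $Z\le 1/\lambda$) is in hand.
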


The proof of Theorem~\ref{T:ext} requires two auxiliary results. The first one shows that the set of outcomes of semi-static strategies as a subset of $L^2(\Fcal_T)$ is closed with respect to convergence in $L^1$.

\begin{lem} \label{L:Wclosed}
Let $\QQ\in\Mcal(\FF)$. The set of outcomes of semi-static strategies,
\[
W = \{a_0 + a_1\psi_1 + \cdots + a_n\psi_n + (H\cdot S)_T \colon a_0,\ldots,a_n\in\R \text{ and } H\in L^2(S)\},
\]
is closed in the following sense: if $(X_k)\subseteq W$ satisfies $X_k\to X$ in $L^1$ for some $X\in L^2(\Fcal_T)$, then $X\in W$.
\end{lem}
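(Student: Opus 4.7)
The plan is in two phases: first establish that $W$ is closed in the $L^2(\Fcal_T,\QQ)$-topology, then use Hahn-Banach duality together with an approximation of $W^\perp$ by bounded random variables to upgrade $L^2$-closedness to the stated closedness under $L^1$-convergence of sequences with $L^2$-limit.

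For the first phase, I would write $W = V + \Scal(S)$ with $V := \vspan\{1,\psi_1,\ldots,\psi_n\}$ finite-dimensional in $L^2(\Fcal_T,\QQ)$ (since $\EE_\QQ[\psi_i^2]<\infty$ by definition of $\Mcal(\FF)$) and $\Scal(S)$ closed in $L^2(\Fcal_T,\QQ)$ via the identification of $\Hcal^2(\FF,\QQ)$-martingales with their terminal values. The sum of a closed subspace and a finite-dimensional subspace of a Banach space is closed, so $W$ is $L^2$-closed. It will be convenient to rewrite this as an orthogonal direct sum $W=\tilde V\oplus\Scal(S)$, where $\tilde V:=(I-P)V$ and $P$ denotes the $L^2$-orthogonal projection onto $\Scal(S)$; then $\tilde V$ is finite-dimensional, orthogonal to $\Scal(S)$, and $W^\perp=\tilde V^\perp\cap\Scal(S)^\perp$.

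For the second phase, $L^2$-closedness of $W$ reduces the task to showing $\EE_\QQ[XZ]=0$ for every $Z\in W^\perp$. When $Z\in W^\perp\cap L^\infty$ this is immediate, since $\EE_\QQ[X_kZ]=0$ for each $k$ (as $X_k\in W$) and the $L^1$--$L^\infty$ estimate $|\EE_\QQ[(X_k-X)Z]|\le\|Z\|_\infty\|X_k-X\|_1\to 0$ gives $\EE_\QQ[XZ]=\lim_k\EE_\QQ[X_kZ]=0$. So it remains to approximate a general $Z\in W^\perp$ in $L^2$ by elements of $W^\perp\cap L^\infty$. My plan is to set $Z_t:=\EE_\QQ[Z\mid\Fcal_t]$ and $\tau_N:=\inf\{t\colon|Z_t|>N\}\wedge T$; then the stopped terminal value $Z_{\tau_N}$ converges to $Z$ in $L^2$ as $N\to\infty$ by martingale convergence, and lies in $\Scal(S)^\perp$ by stability of $\Scal(S)$ under stopping. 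Subtracting the $L^2$-projection of $Z_{\tau_N}$ onto the finite-dimensional subspace $\tilde V$ then produces an element $\tilde Z^{(N)}\in W^\perp$ still converging to $Z$ in $L^2$.

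The main obstacle is ensuring that $\tilde Z^{(N)}$ is genuinely essentially bounded in the \cadlag setting: while $|Z_{\tau_N-}|\le N$, a jump of $Z$ at $\tau_N$ can make $|Z_{\tau_N}|$ arbitrarily large, and the finite-dimensional correction coming from $\tilde V$ is not automatically bounded either. I would resolve the first difficulty either by pre-stopping along an announcing sequence for $\tau_N$ or by a jump-truncation that preserves membership in $\Scal(S)^\perp$ through the strongly-orthogonal-martingale structure, and the second by a further truncation of the finite-dimensional correction, keeping $\tilde Z^{(N)}\in W^\perp$ up to an $L^2$-small error. Once such bounded approximations are in hand, $\EE_\QQ[X\tilde Z^{(N)}]=0$ passes to the limit as $N\to\infty$ to yield $\EE_\QQ[XZ]=0$ for every $Z\in W^\perp$, whence $X\in W$.
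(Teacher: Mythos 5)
Your phase 1 is fine: $\Scal(S)$ is closed in $L^2$, the sum of a closed subspace and a finite-dimensional subspace is closed, so $W$ is $L^2$-closed. But phase 2 has a genuine gap, and your proposal itself flags the problem without resolving it. To conclude $\EE_\QQ[XZ]=0$ for a given $Z\in W^\perp$, you need to produce a sequence of random variables that lie \emph{exactly} in $W^\perp\cap L^\infty$ and converge to $Z$ in $L^2$: exact membership in $W^\perp$ is needed so that $\EE_\QQ[X_k\cdot(\text{test r.v.})]=0$, and exact boundedness is needed to pass the $L^1$-limit $X_k\to X$ through the pairing. Your proposed constructions deliver neither. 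The stopping time $\tau_N=\inf\{t:|Z_t|>N\}\wedge T$ is in general not predictable, so there is no announcing sequence; stopping the martingale at $\tau_N$ leaves an uncontrolled jump $\Delta Z_{\tau_N}$, and a ``jump-truncation'' of a martingale is not in general a martingale, let alone one strongly orthogonal to $\Scal(S)$. Subtracting the finite-dimensional $\tilde V$-projection reintroduces a possibly unbounded correction. And the fallback of accepting membership in $W^\perp$ only ``up to an $L^2$-small error'' does not work: if $\tilde Z^{(N)}=U_N+E_N$ with $U_N\in W^\perp$ and $\|E_N\|_2$ small, you still need $U_N\in L^\infty$ (not just $\tilde Z^{(N)}\in L^\infty$) to get $\EE_\QQ[X_kU_N]\to\EE_\QQ[XU_N]=0$, and there is no reason for $U_N$ to be bounded. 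In short, the auxiliary claim that $W^\perp\cap L^\infty$ is $L^2$-dense in $W^\perp$ is the real content, and it is not established.

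For comparison, the paper avoids this difficulty entirely: it proves the lemma by induction on $n$, reducing at each step to the statement with one fewer static claim via a Hahn-Banach separation of $\psi_n$ from the smaller $W'$ inside $L^1$ (which controls the coefficients $a_k$ directly), and handling the base case $n=0$ with Yor's theorem on $L^1$-closedness of the set of terminal values $\{(H\cdot S)_T\}$. That theorem does precisely the hard work that your approach tries to reconstruct by hand, and it is invoked as a black box rather than reproved. If you want to salvage your route, the cleanest fix would be to invoke Yor's $L^1$-closedness result for $\Scal(S)$ and then treat the finite-dimensional part as the paper does via a separating functional in $L^1$ — at which point you have essentially reproduced the paper's argument.
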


\begin{proof}
We proceed by induction on $n$. Suppose the result is known to be true with $n$ replaced by $n-1$, and let $W'$ be defined as $W$ with $n$ replaced by $n-1$. Let $(X_k)\subseteq W$ be a sequence satisfying $X_k\to X$ in $L^1$ for some $X\in L^2(\Fcal_T)$. Then $X_k=X_k' + a_k \psi_n$ for some $X_k'\in W'$ and $a_k\in\R$. If $\psi_n$ lies in the $L^1$-closure of $W'$, then the induction hypothesis yields $\psi_n\in W'$, so that in fact $(X_k)\subseteq W'$ and hence $X\in W'\subseteq W$ by another application of the induction hypothesis. We may thus suppose that $\psi_n$ does not lie in the $L^1$-closure of $W'$. Then by the Hahn-Banach theorem there exists a continuous linear functional $F$ on $L^1(\Fcal_T)$ that vanishes on $W'$ and satisfies $F(\psi_n)=1$. Thus $a_k=F(X_k)\to F(X)$, whence $X'_k \to X-F(X)\psi_n$ in $L^1$. The induction assumption then yields $X-F(X)\psi_n\in W'$ and hence $X\in W$, as desired.

It remains to prove the result for $n=0$. This follows immediately from the following result by \cite{Yo78}; see Theorem~15.4.7 in~\cite{DS06} for a formulation that covers the multidimensional case:

{\it Let $H^k$, $k\ge1$, be $S$-integrable processes such that $H^k\cdot S$ is a martingale for each $n$, and suppose $(H^k\cdot S)_T\to X$ in $L^1$ for some random variable $X$. Then there exists an $S$-integrable process $H$ such that $H\cdot S$ is a martingale with $(H\cdot S)_T=X$ a.s.}

Since in our case $X\in L^2(\Fcal_T)$, we additionally obtain $H\in L^2(S)$ and hence $X\in W$. This completes the proof.
\end{proof}

\begin{proof}[Proof of Theorem~\ref{T:ext}]
\ref{T:ext:1} $\Longrightarrow$ \ref{T:ext:3}:
By Lemma~\ref{L:Wclosed} it suffices to show that the linear span of $\Scal(S)$ and $1,\psi_1,\ldots,\psi_n$, which we denote by $W$, is dense in $L^1(\Fcal_T)$. Indeed, suppose this has been proved. Then for any $X\in L^2(\Fcal_T)$ we can find a sequence $(X_k)\subseteq W$ with $X_k\to X$ in $L^1$, whence $X\in W$ by Lemma~\ref{L:Wclosed}.

It remains to prove that $W$ is dense in $L^1(\Fcal_T)$. This follows from an application of Douglas's theorem; see \cite{Do64}. For completeness we provide the short argument. By the Hahn-Banach theorem it suffices to show that $Z=0$ for any random variable $Z\in L^\infty(\Fcal_T)$ such that $\EE_\QQ[YZ]=0$ for all $Y\in W$. Pick any such $Z$. By scaling, we may assume $|Z|\le 1/2$. Define probability measures $\QQ^+$ and $\QQ^-$ by
\[
d\QQ^{\pm} = (1 \pm Z)d\QQ.
\]
Since the Radon-Nikodym derivatives lie in $[1/2,3/2]$, a random variable is square integrable under $\QQ$ if and only if it is square integrable under $\QQ^{\pm}$. Moreover, we have
\[
\EE_{\QQ^{\pm}}[\psi_i]=\EE_\QQ[\psi_i] \pm\EE_\QQ[Z\psi_i]=0
\]
for all $i=1,\ldots,n$. Similarly, $\EE_{\QQ^{\pm}}[(H\cdot S)_T]=\EE_\QQ[(H\cdot S)_T]=0$ for all simple predictable bounded integrands~$H$.
Since also $\QQ^{\pm}\ll\QQ$, we have  $\QQ^{\pm}\in\Mcal(\FF)$. Now, since $\QQ=\frac{1}{2}\QQ^++\frac{1}{2}\QQ^-$ and $\QQ$ is an extreme point, it follows that $\QQ^+=\QQ^-=\QQ$, whence $Z=0$ as required.

\ref{T:ext:3} $\Longrightarrow$ \ref{T:ext:1}:
Suppose $\QQ=\lambda\QQ^1+(1-\lambda)\QQ^2$ for some $\QQ^1,\QQ^2\in\Mcal(\FF)$ and $\lambda\in(0,1)$. Then $\QQ^1\ll\QQ$, so we may define $Z=\frac{d\QQ^1}{d\QQ}$. For any $X=a_0+a_1\psi_1+\cdots+a_n\psi_n+(H\cdot S)_T\in W$ we then have
\[
\EE_\QQ[(Z-1) X] = \EE_{\QQ^1}[X] - \EE_{\QQ}[X]=a_0-a_0=0.
\]
Since $W$ is all of $L^2(\Fcal_T,\QQ)$ by assumption, it follows that $Z=1$ and hence $\QQ^1=\QQ^2=\QQ$. Thus $\QQ$ is an extreme point.
\end{proof}

\begin{rem}
Theorem~\ref{T:ext} could also be stated and proved in the $L^1$ setting, where the $\psi_i$ are only assumed integrable, and semi-static completeness is defined using integrands $H$ such that $H\cdot S$ is a martingale. This $L^1$ version of Theorem~\ref{T:ext} is easily proved by observing that the correspondingly modified set $W$ in~Lemma~\ref{L:Wclosed} is closed in $L^1$ (the proof is essentially the same). Since subsequent developments rely rather strongly on the Hilbert space structure of $L^2$, we opt to work in the $L^2$ setting throughout the paper in order to maintain consistency.
\end{rem}

In the classical setting of dynamic hedging without static components, there exists a wide range of complete models. For instance, dynamic completeness holds as soon as the price process is a strong solution to a possibly path-dependent stochastic differential equation of the form $dS_t = \sigma(t,S_u:u\le t)dW_t$, where $W$ is Brownian motion and $\sigma$ never vanishes. One may thus wonder whether, in the semi-static setting, there is any reason to expect complete models to exhibit further structural properties.

We now indicate why one might expect this to be the case. To this end, consider some $\QQ\in\Mcal(\FF)$ under which semi-static completeness holds, and suppose temporarily that $\Psi=\{\psi\}$ contains one single element. Consider the non-hedgeable part $\psi - \pi(\psi)$ of $\psi$, where $\pi$ denotes the orthogonal projection onto the closed subspace $\{X_T:X\in \Scal(S)\}\subseteq L^2(\Fcal_T)$. Let $M$ be the square-integrable martingale generated by this non-hedgeable part, $M_t=\EE_\QQ[\psi-\pi(\psi)\mid\Fcal_t]$. Then $M$ is weakly, hence strongly, orthogonal to $\Scal(S)$. It follows that $H\cdot M$ is (weakly and strongly) orthogonal to $\Scal(S)$ for any $H\in L^2(M)$, so that, by semi-static completeness, $H\cdot M$ lies in $\vspan\{M\}$, the linear span of $M$. Furthermore, the inclusion $\vspan\{M\}\subseteq \Scal(M)$ holds due to our convention regarding the time-zero value of stochastic integrals. Consequently,
\[
\Scal(M) = \vspan\{M\}.
\]
Thus the set of stochastic integrals with respect to $M$ is one-dimensional, which obviously imposes severe restrictions on the behavior of $M$; see Proposition~\ref{P:SM=spanM} for a precise statement in a multidimensional setting. Developing these observations further, one is led to a description of the behavior of semi-statically complete models in terms of dynamically complete models. This is the topic of the next section.

\section{Semi-static completeness for continuous price processes}\label{Sect:tree}

The goal of this section is to characterize the behavior of semi-statically complete models with continuous price processes. We consider a probability measure~$\QQ\in\Mcal(\FF)$ that will remain fixed throughout this section. Relations between random quantities are understood in the $\QQ$-almost sure sense, and to simplify notation we drop the subscript $\QQ$ and write $\EE[\,\cdot\,]=\EE_\QQ[\,\cdot\,]$.

A key notion needed in the characterization theorem is that of an atomic tree. For a set $A\in\Fcal_T$ we denote by $t(A)$ the first time $A$ becomes measurable,
\[
t(A) = \inf\{ t\in[0,T] \colon A\in\Fcal_t\}.
\]
Note that $A\in\Fcal_{t(A)}$ by right-continuity of $\FF$, and that $A\notin\Fcal_{t(A)-}$ if $t(A)>0$. Recall that $A$ is an {\em atom} of $\Fcal_t$ if $A\in\Fcal_t$ and $\QQ(B)$ equals zero or $\QQ(A)$ whenever $B\in\Fcal_t$, $B\subseteq A$.

\begin{defn} \label{D:atomic tree}
An {\em atomic tree} is a finite collection $\T$ of events in $\Fcal_T$ satisfying the following properties:
\begin{enumerate}
\item every $A\in\T$ is a non-null atom of $\Fcal_{t(A)}$;
\item for every $A,A'\in\T$ such that $t(A)< t(A')$, either $A\supseteq A'$ or $A\cap A'=\emptyset$;
\item for every $A,A'\in\T$ such that $A\supsetneq A'$, $\QQ(A\setminus A')>0$.
\end{enumerate}
\end{defn}

In order to discuss atomic trees $\T$ it is useful to have the following terminology: an element $A'\in\T$ is called a {\em child} of another element $A\in\T$ if $A'\subsetneq A$ and there is no $A''\in\T$ such that $A'\subsetneq A''\subsetneq A$. Moreover, $A\in\T$ is called a {\em leaf} if it has no children, or equivalently, if there is no $A'\in\T$ such that $A'\subsetneq A$. It is clear that $\T$ admits a natural tree structure obtained by connecting each element $A$ to its children. In particular, the above notion of a leaf coincides with the usual graph-theoretic notion. See Figure~\ref{F:tree} for an illustration. A natural measure of the size of an atomic tree $\T$ is the number of paths through the tree, or equivalently the number of leaves. We refer to this quantity as the {\em dimension} of $\T$,
\[
\dim\T = \text{number of leaves in $\T$}.
\]
This terminology is motivated by the fact that the space of functions defined on the set of paths through the tree has dimension $\dim\T$; see also Remark~\ref{R:atomic tree}\ref{R:atomic tree:4} below.

We now introduce a natural non-degeneracy condition on atomic trees.

\begin{defn}
An atomic tree $\T$ is called {\em full} if its leaves form a partition of $\Omega$ (up to nullsets), and if $A$ is an atom of $\Fcal_{t(A')-}$ whenever $A'$ is a child of $A$.
\end{defn}

\begin{figure}[h]
\begin{center}
\begin{tikzpicture}
    \node[left] () at (-0.2, 0) {$\Omega$};
    \node[left] () at (7-0.2, 2.7) {$A_7$};
    \node[left] () at (7-0.2, 1.7) {$A_6$};
    \node[left] () at (4.5-0.2, 0.7) {$A_5$};
    \node[left] () at (4.5-0.2, -0.7) {$A_4$};
    \node[left] () at (2-0.2, -2.2) {$A_1$};
    \node[left] () at (2+0.7, -0.4) {$A_2$};
    \node[left] () at (2-0.2, 2.2) {$A_3$};
    
    \vertex (0) at (0, 0) {};
    \coordinate (10) at (1.7, 0) {};
    \vertex (11) at (2, 2.2) {}; 
    \vertex (12) at (2, 0) {}; 
    \vertex (13) at (2, -2.2) {}; 
    \coordinate (110) at (6.8, 2.2) {}; 
    \vertex (111) at (7, 2.7) {}; 
    \vertex (112) at (7, 1.7) {}; 
    \coordinate (120) at (4.3, 0) {}; 
    \vertex (121) at (4.5, 0.7) {}; 
    \vertex (122) at (4.5, -0.7) {}; 
    \coordinate (1210) at (8, 0.7) {}; 

    \begin{scope}[every path/.style={-}]
       \draw (0) -- (10) ;
       \draw (10) -- (11) ;
       \draw (10) -- (12) ;
       \draw (10) -- (13) ;

       \draw (11) -- (110) ;
       \draw (110) -- (111) ;
       \draw (110) -- (112) ;

       \draw (12) -- (120) ;
       \draw (120) -- (121) ;
       \draw (120) -- (122) ;
    \end{scope}
    
    \draw[dotted] (0,-3) -- (0,3);
    \draw[dotted] (2,-3) -- (2,3);
    \draw[dotted] (4.5,-3) -- (4.5,3);
    \draw[dotted] (7,-3) -- (7,3);
    \draw[dotted] (8,-3) -- (8,3);
    \draw[thick,->] (0,-3) -- (0,-3) node[below] {$0$} -- (2,-3) node[below] {$t_1$}  -- (4.5,-3) node[below] {$t_2$}  -- (7,-3) node[below] {$t_3$}  -- (8,-3) node[below] {$T$} -- (9,-3);
\end{tikzpicture}
\end{center}
\caption{Schematic illustration of the filtration $\FF$ containing a full atomic tree $\T$. Each circle denotes an event $A\in\T$. In particular, the leaves of $\T$ are $\{A_1,A_4,A_5,A_6,A_7\}$. The lines denote relations between the elements of $\T$; for example, $A_4$ and $A_5$ are children of $A_2$, which in turn is a child of $\Omega$. Furthermore, we have $t_1=t(A_1)$, $t_2=t(A_4)=t(A_5)$, and $t_3=t(A_6)=t(A_7)$, and thus $\zeta(\T)=t_1\bm 1_{A_1}+t_2\bm 1_{A_2}+t_3\bm 1_{A_3}$.}
\label{F:tree}
\end{figure}
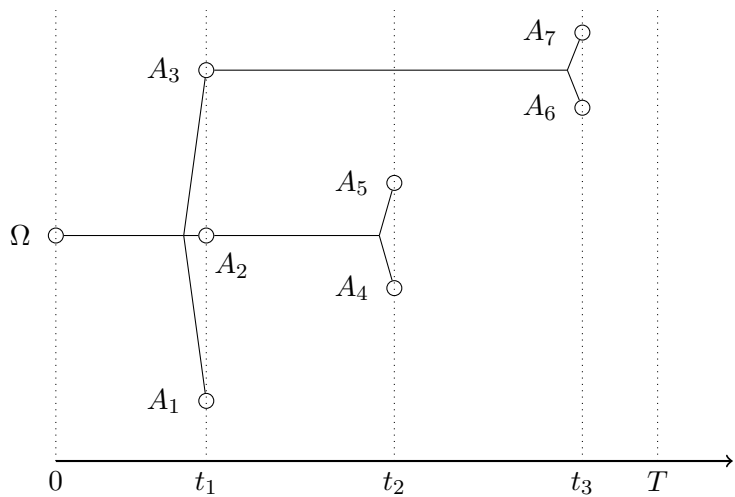

The following remark collects some basic properties and observations regarding full atomic trees that are immediate consequences of the above definitions.

\begin{rem} \label{R:atomic tree}
Let $\T$ be a full atomic tree.
\begin{enumerate}
\item Each $A\in\T$ that is not a leaf is the union of its children up to nullsets. Moreover, if $A'$ and $A''$ are children of $A$, then $t(A')=t(A'')$.
\item\label{R:atomic tree:3} Since $\T$ is a collection of elements of $\Fcal$, the sigma-algebra $\sigma(\T)\subseteq\Fcal$ is well-defined. Furthermore, up to nullsets, $\sigma(\T)=\sigma(A\in\T\colon\text{$A$ is a leaf})$. Since also the leaves form a partition of $\Omega$,
\begin{equation} \label{eq:EXsigma(T)}
\EE[X\mid\sigma(\T)] = \sum_A \frac{\EE[X\bm 1_A]}{\QQ(A)}\bm 1_A
\end{equation}
holds for any $X\in L^1(\Fcal_T)$, where the sum extends over all leaves $A\in\T$. Furthermore, $\sigma(\T)$ can alternatively be described up to nullsets as
\begin{equation} \label{eq:zeta(T)}
\sigma(\T) = \Fcal_{\zeta(\T)}  \qquad\text{with}\qquad \zeta(\T) = \sum_A t(A)\bm 1_A,
\end{equation}
where again the sum extends over the leaves of $\T$. Note that $\zeta(\T)$ is a stopping time bounded above by~$T$, which can be thought of as the ``end of the tree''.
\item\label{R:atomic tree:4} In view of~\eqref{eq:EXsigma(T)}, the dimension of $L^2(\sigma(\T))$ equals the number of leaves in $\T$. This motivates the definition of~$\dim\T$.
\end{enumerate}
\end{rem}

Finally, the following restricted notion of (dynamic) completeness is needed.

\begin{defn} \label{D:restrcompl}
Given $t\in[0,T]$ and $A\in\Fcal_t$, we say that $S$ is {\em complete on $A\times[t,T]$} if any $X\in L^2(\Fcal_T)$ can be replicated on $A$ by dynamic trading over $[t,T]$; that is, if
\[
X = x + (H\cdot S)_T \quad \text{on} \quad A
\]
holds almost surely for some $x\in\R$ and some $H\in L^2(S)$ with $H=0$ on $\lc0,t\rc$.
\end{defn}

\begin{rem}
If $S$ is complete on $A\times[t,T]$, then $A$ is necessarily an atom of~$\Fcal_t$. Indeed, if the arbitrarily chosen random variable $X$ in Definition~\ref{D:restrcompl} is $\Fcal_t$-measurable, then it is necessarily almost surely constant on~$A$ since $X\bm 1_A=\EE[x+(H\cdot X)_T\mid\Fcal_t]\bm 1_A=x\bm 1_A$.
\end{rem}

We can now state our main characterization theorem. The proof is given in Appendix~\ref{app:proof}. Recall that we work under an arbitrary fixed measure $\QQ\in\Mcal(\FF)$. We let $\zeta(\T)$ denote the stopping time in~\eqref{eq:zeta(T)} associated with an atomic tree $\T$.

\begin{thm} \label{T:S cont char}
Assume $S$ is continuous. Then semi-static completeness holds if and only if there exists a full atomic tree $\T$ such that
\begin{enumerate}
\item\label{T:S cont char:1} $S$ is complete on $A\times[t(A),T]$ for each leaf $A\in\T$,
\item\label{T:S cont char:2} the set $\{\EE[\psi_i\mid\sigma(\T)] \colon i=1,\ldots,n\}$ contains $\dim\T - 1$ linearly independent elements.
\end{enumerate}
In this case, $L^2(\Fcal_T)= L^2(\sigma(\T))\oplus\Scal(S)$, and $S$ is constant on $\lc0,\zeta(\T)\rc$.
\end{thm}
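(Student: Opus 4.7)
\emph{Sufficiency.} Given a full atomic tree $\T$ satisfying~(i) and~(ii), I will replicate an arbitrary $X \in L^2(\Fcal_T)$ in two stages. Setting $Y = \EE[X \mid \sigma(\T)]$, formula~\eqref{eq:EXsigma(T)} shows that $Y|_A$ is a.s.\ constant on each leaf $A\in\T$ and equal to $\EE[X \mid \Fcal_{t(A)}]|_A$, since $A$ is an atom of $\Fcal_{t(A)}$. Property~(i) then yields, on each leaf $A$, an integrand $H^A$ supported on $A\times[t(A),T]$ with $(X - Y)\bm 1_A = (H^A\cdot S)_T\bm 1_A$; the constant initial capital from Definition~\ref{D:restrcompl} vanishes because it equals $\EE[X - Y \mid \Fcal_{t(A)}]|_A = 0$. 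Summing the $H^A$ over the finitely many leaves produces $H \in L^2(S)$ with $X - Y = (H\cdot S)_T$. Property~(ii) lets me further expand $Y = a_0 + \sum_i a_i \EE[\psi_i\mid\sigma(\T)]$, and applying the same leaf-wise argument to each residual $\psi_i - \EE[\psi_i\mid\sigma(\T)]$ produces integrands $H^i$ realizing $\psi_i = \EE[\psi_i\mid\sigma(\T)] + (H^i\cdot S)_T$. Combining these pieces delivers the semi-static representation of $X$, and simultaneously the decomposition $L^2(\Fcal_T) = L^2(\sigma(\T)) \oplus \Scal(S)$ (orthogonality holds thanks to the constancy of $S$ on $\lc 0,\zeta(\T)\rc$ established below).

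\emph{Necessity.} For the converse, I would start from the observation made just before the theorem: the closed linear span of the non-hedgeable parts $\{\psi_i - \pi(\psi_i)\}$ generates a martingale $M = (M^1,\ldots,M^n)$ that is strongly orthogonal to $\Scal(S)$ and satisfies $\Scal(M) = \vspan\{M\}$. Proposition~\ref{P:SM=spanM} should then constrain $M$ sharply, forcing each component to be purely discontinuous, to jump only at finitely many stopping times, and to be constant between jumps. From this skeleton I would define $\T$ as the collection of minimal non-null events carved out by successive jumps of $M$: each node $A$ corresponds to a distinct jump history of $M$ up to its first-measurable time $t(A)$, and its children correspond to the possible next jump values. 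Fullness---atomicity of $A$ inside $\Fcal_{t(A')-}$ for each child $A'$---follows because between two consecutive jumps of $M$ the filtration on $A$ cannot refine beyond $\sigma(\T)$, the only remaining $L^2$-randomness being supplied by $S$ (continuous) and $M$ (constant on this interval).

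With $\T$ in hand, property~(i) drops out because on a leaf $A$ all jumps of $M$ have already occurred, so $\Scal(S)^\perp$ contributes no further $\Fcal_T$-randomness on $A$, whence semi-static completeness collapses to dynamic completeness of $S$ on $A\times[t(A),T]$. Property~(ii) follows from a dimension count: the splitting $L^2(\Fcal_T) = L^2(\sigma(\T)) \oplus \Scal(S)$ forces $\{1\}\cup\{\EE[\psi_i\mid\sigma(\T)]\}_i$ to span the $\dim\T$-dimensional space $L^2(\sigma(\T))$, so $\dim\T - 1$ of these projections must be linearly independent. Constancy of $S$ on $\lc 0,\zeta(\T)\rc$ is then a consequence of fullness: on each stochastic interval $\lc t(A), t(A')\rc$ lying inside a non-leaf node $A$, the filtration restricted to $A$ does not grow, so the continuous martingale $S$ must be constant there, and continuity patches these intervals across branching times.

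\emph{Main obstacle.} The crux is the structural statement behind Proposition~\ref{P:SM=spanM}---that a vector martingale $M$ with $\Scal(M)=\vspan\{M\}$ has finitely many jump times, takes only finitely many values at each, and is constant in between---which is what makes the extraction of the atomic tree possible. Controlling the interplay between this discrete skeleton and the continuous process $S$, in particular ensuring that leaf-wise completeness of $S$ is compatible with its forced constancy on the interior of the tree, is the delicate technical step naturally deferred to the appendix.
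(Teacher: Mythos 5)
Your sufficiency argument is correct and essentially identical to the paper's: in both cases, property~\ref{T:S cont char:1} yields the representation $X = \EE[X\mid\sigma(\T)] + (H\cdot S)_T$ by gluing leaf-wise replications, and property~\ref{T:S cont char:2} then handles the $\sigma(\T)$-measurable part. (You additionally verify that the leaf constants vanish, which is a cosmetic variant of the paper's bookkeeping.) The observation that orthogonality of the direct sum follows from constancy of $S$ on $\lc0,\zeta(\T)\rc$ is right, though in the sufficiency direction that constancy is Corollary~\ref{C:S const on 0zeta(T)} applied to the given tree rather than something ``established below.''

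For necessity, your high-level strategy---orthonormalize the unhedgeable residuals into a martingale $M$ with $\Scal(M)=\vspan\{M^1,\ldots,M^d\}$, invoke Proposition~\ref{P:SM=spanM} to discretize it, and read the tree off the resulting atoms---does match the paper's Lemma~\ref{L:psi_rep}. But the proposal stops at the skeleton, and what you defer is exactly where the content lies, so this is a genuine gap rather than a complete proof. Two things are missing. First, Proposition~\ref{P:SM=spanM} yields \emph{deterministic} jump times $t_1<\cdots<t_m$ (not stopping times, as you write) and atoms $B^k_j$ of $\Fcal_{t_k-}$, but it does not by itself say that $N^{(k)}_T$ takes finitely many values; one must combine it with semi-static completeness via~\eqref{eq:L2SN1Nd} to show that the space of $\Fcal_{t_k}$-measurable variables supported on $B^k_1\cup\cdots\cup B^k_{d_k}$ is finite-dimensional, which is what makes the branching finite. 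Second, and more importantly, the inductive construction of $\T$ hinges on the identification $A = B^k_j$ up to a nullset, where $A$ is the leaf of the tree built so far that meets $B^k_j$; this is~\eqref{eq:A=Bk1} in the paper, a quantitative statement $\QQ(A)=\QQ(B^k_j)$ proved by multiplying the semi-static representation of $\bm 1_{B^k_j}$ through by $\bm 1_A$, killing the stochastic-integral term via Lemma~\ref{L:contMconst}, and taking expectations. Your appeal to ``the filtration on $A$ cannot refine beyond $\sigma(\T)$'' names the conclusion of this step but supplies no argument for it. What you defer is therefore precisely the proof of Lemma~\ref{L:psi_rep}; granting it, your derivations of \ref{T:S cont char:1}, \ref{T:S cont char:2}, and the constancy of $S$ on $\lc0,\zeta(\T)\rc$ are sound and agree with the paper.
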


Semi-static completeness is therefore fully specified by the structure of the filtration. More precisely, a model is semi-statically complete if and only if, under that model, the filtration has the shape depicted in Figure~\ref{F:atomic tree}: there is an atomic component, generated by the part of the statically traded securities that is not replicable by trading in $S$ (see also Lemma~\ref{L:psi_rep}), and a richer component generated by $S$. As a result, a semi-statically complete model is obtained as a combination of dynamically complete models ``glued'' together via an atomic tree.

Clearly such models are ``unphysical'' in the sense that they do not give realistic descriptions of real asset prices. Nonetheless, they do characterize the extreme points of the set of calibrated martingale measures (see Theorem~\ref{T:ext}). Therefore, one may regard Theorem~\ref{T:S cont char} as providing a {\em parameterization} of this set of extreme points in terms of dynamically complete models and atomic trees.

Let us briefly mention how the atomic tree $\T$ arises in the proof of Theorem~\ref{T:S cont char}. The key idea is to consider the unhedgeable parts $V^i_t = \EE[\psi_i\mid\Fcal_t] - (H^i\cdot S)_t$ of the static claims $\psi_i$, where $H^i\cdot S$ is the orthogonal projection of $\EE[\psi_i\mid\Fcal_t]$ onto $\Scal(S)$. Semi-static completeness then implies, by the argument sketched at the end of Section~\ref{Sect:JY}, that
\[
\Scal(V^1,\ldots,V^n) = \vspan\{V^1,\ldots,V^n\}.
\]
This yields a set of atoms via Proposition~\ref{P:SM=spanM}, which are used to construct an atomic tree $\T$ such that $\psi_i=\EE[\psi_i\mid\sigma(\T)]+(H^i\cdot S)_T$; see Lemma~\ref{L:psi_rep}. From this, one deduces~\ref{T:S cont char:1} and~\ref{T:S cont char:2} in a relatively straightforward manner.

\begin{figure}[h]
\begin{center}
\begin{tikzpicture}
    \node (myfirstpic) at (7.5,2.7) {\includegraphics[width=.065\textwidth,height=.06\textwidth]{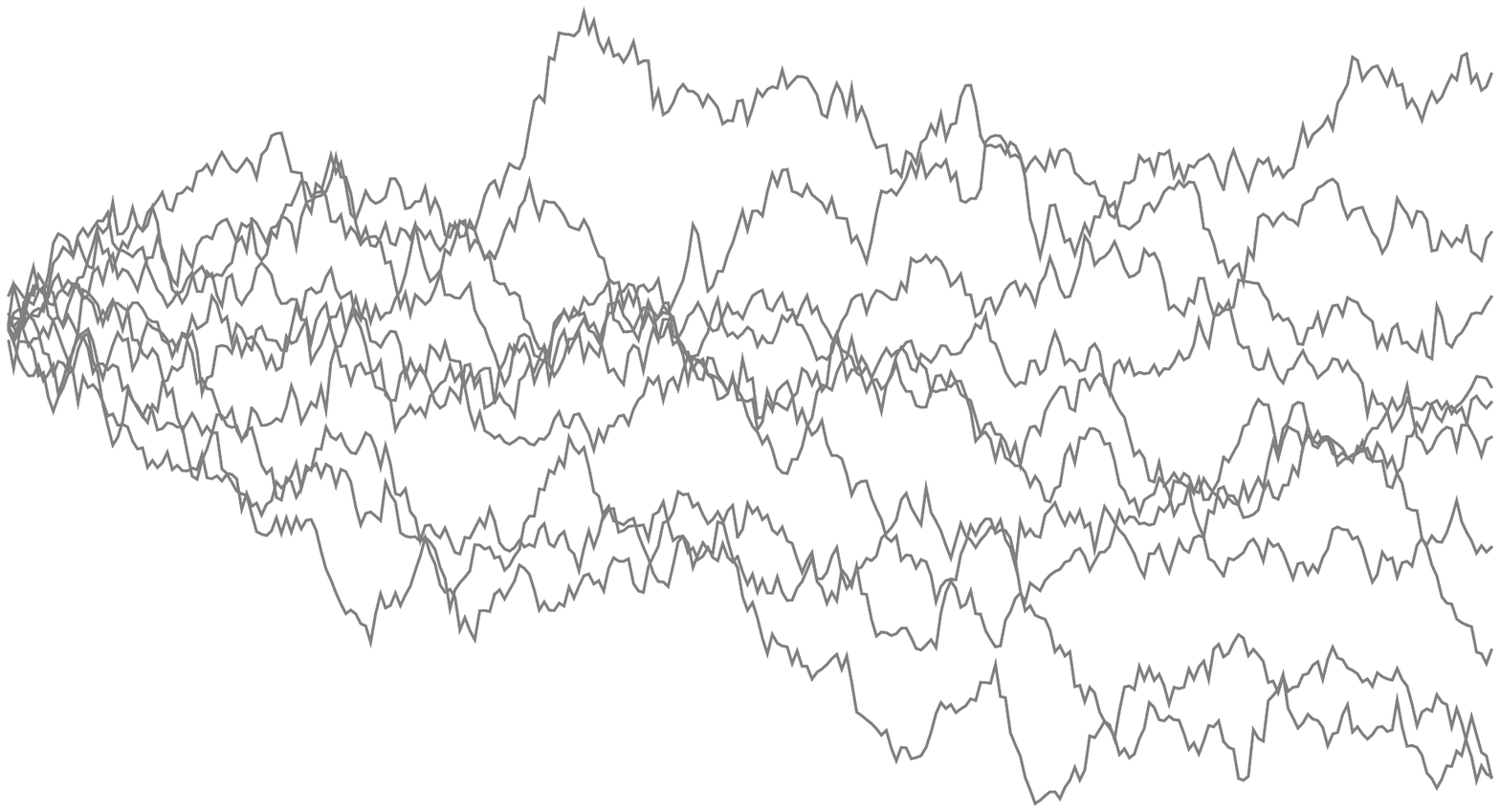}};
    \node (myfirstpic) at (7.5,1.7) {\includegraphics[width=.065\textwidth,height=.06\textwidth]{BM_paths_3.pdf}};
    \node (myfirstpic) at (6.25,-0.61) {\includegraphics[width=.23\textwidth,height=.1\textwidth]{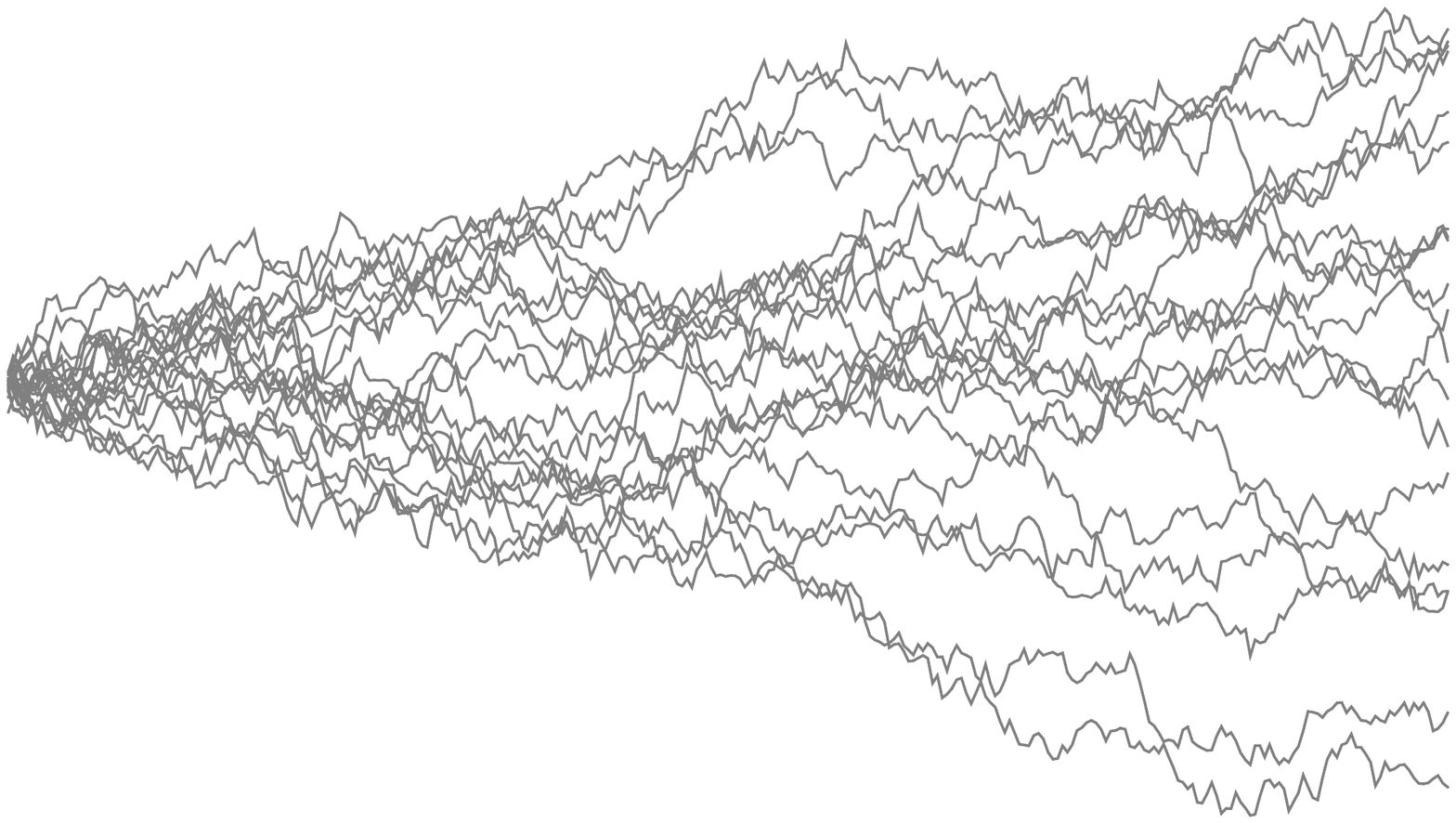}};
    \node (myfirstpic) at (4.97,-2.1) {\includegraphics[width=.4\textwidth,height=.1\textwidth]{BM_paths_2.pdf}};

    \node[left] () at (-0.2, 0) {$\Omega$};
    \node[left] () at (7-0.2, 2.7) {$A_7$};
    \node[left] () at (7-0.2, 1.7) {$A_6$};
    \node[left] () at (4.5-0.2, 0.7) {$A_5$};
    \node[left] () at (4.5-0.2, -0.7) {$A_4$};
    \node[left] () at (2-0.2, -2.2) {$A_1$};
    \node[left] () at (2+0.7, -0.4) {$A_2$};
    \node[left] () at (2-0.2, 2.2) {$A_3$};
    
    \vertex (0) at (0, 0) {};
    \coordinate (10) at (1.7, 0) {};
    \vertex (11) at (2, 2.2) {}; 
    \vertex (12) at (2, 0) {}; 
    \vertex (13) at (2, -2.2) {}; 
    \coordinate (110) at (6.8, 2.2) {}; 
    \vertex (111) at (7, 2.7) {}; 
    \vertex (112) at (7, 1.7) {}; 
    \coordinate (120) at (4.3, 0) {}; 
    \vertex (121) at (4.5, 0.7) {}; 
    \vertex (122) at (4.5, -0.7) {}; 
    \coordinate (1210) at (8, 0.7) {}; 

    \begin{scope}[every path/.style={-}]
       \draw (0) -- (10) ;
       \draw (10) -- (11) ;
       \draw (10) -- (12) ;
       \draw (10) -- (13) ;

       \draw (11) -- (110) ;
       \draw (110) -- (111) ;
       \draw (110) -- (112) ;

       \draw (12) -- (120) ;
       \draw (120) -- (121) ;
       \draw (120) -- (122) ;

       \draw (121) -- node[midway,above] {}  (1210) ;
    \end{scope}
    
    \draw[dotted] (0,-3) -- (0,3);
    \draw[dotted] (2,-3) -- (2,3);
    \draw[dotted] (4.5,-3) -- (4.5,3);
    \draw[dotted] (7,-3) -- (7,3);
    \draw[dotted] (8,-3) -- (8,3);
    \draw[thick,->] (0,-3) -- (0,-3) node[below] {$0$} -- (2,-3) node[below] {$t_1$}  -- (4.5,-3) node[below] {$t_2$}  -- (7,-3) node[below] {$t_3$}  -- (8,-3) node[below] {$T$} -- (9,-3);
\end{tikzpicture}
\end{center}
\caption{Schematic illustration of the filtration $\FF$ when semi-static completeness holds. The wiggly curves emanating from the leaves (except $A_5$) illustrate that the filtration may quickly become rich after~$\zeta(\T)$. It is, however, also possible that no further events occur once a leaf is reached; this is illustrated by the flat line emanating from $A_5$. By semi-static completeness, each of the models starting at the leaves is dynamically complete.}
\label{F:atomic tree}
\end{figure}

\begin{rem}
The tree $\T$ in Theorem~\ref{T:S cont char} is nonempty since it is full. Thus it contains at least one leaf, whence $\dim\T\ge 1$. In the degenerate case where $\T=\{\Omega\}$, \ref{T:S cont char:1} says that $S$ is complete on $\Omega\times[0,T]$, which is simply the usual notion of dynamic completeness. Furthermore, $\T$ is unique up to nullsets. Indeed, if $\T'$ is another possible tree, the theorem implies that $L^2(\sigma(\T))=L^2(\sigma(\T'))$.
\end{rem}

\begin{cor}\label{cor:cont}
Assume $S$ is continuous, let $\QQ\in\ext\Mcal(\FF)$, and let $\T$ denote the associated full atomic tree. Then the jumps of any martingale $M$ are supported on $\T$, in the sense that $\{\Delta M\ne 0\} \subseteq \{ A\times\{t(A)\} \colon A\in\T\}$.
\end{cor}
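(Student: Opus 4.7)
Since $\QQ\in\ext\Mcal(\FF)$, Theorem~\ref{T:ext} gives semi-static completeness, and then Theorem~\ref{T:S cont char} yields the orthogonal decomposition $L^2(\Fcal_T)=L^2(\sigma(\T))\oplus\Scal(S)$. The plan is first to reduce to the case of a square-integrable $\QQ$-martingale by a standard truncation/localization argument: replacing $M_T$ by $(M_T\wedge n)\vee(-n)\in L^\infty$, the associated martingales have jumps in the target set $\bigcup_{A\in\T}A\times\{t(A)\}$, and since this set is a finite union of graphs of deterministic times the containment is preserved in the limit. For a square-integrable $M$, the decomposition writes $M_T=X+(H\cdot S)_T$ with $X\in L^2(\sigma(\T))$ and $H\in L^2(S)$, so $M_t=N_t+(H\cdot S)_t$ where $N_t:=\EE[X\mid\Fcal_t]$. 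Continuity of $S$ implies that $H\cdot S$ has continuous paths, whence $\Delta M=\Delta N$.

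It therefore suffices to control the jumps of $N$. Because the leaves of $\T$ partition $\Omega$ up to nullsets and generate $\sigma(\T)$ (Remark~\ref{R:atomic tree}), $X$ is a finite linear combination $X=\sum_{L\text{ leaf}}c_L\bm{1}_L$, and by linearity it is enough to show $\{\Delta N^L\neq 0\}\subseteq\bigcup_{A\in\T}A\times\{t(A)\}$ for each martingale $N^L_t:=\QQ(L\mid\Fcal_t)$. Fix a leaf $L$ and let $L=L_k\subsetneq L_{k-1}\subsetneq\cdots\subsetneq L_0$ be its chain of ancestors in $\T$, with $\tau_j:=t(L_j)$. The key computation is that, on each interval $[\tau_j,\tau_{j+1})$ (extended by $[\tau_k,T]$ with $N^L_t=\bm{1}_L$), one has $N^L_t=\frac{\QQ(L)}{\QQ(L_j)}\bm{1}_{L_j}$. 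Indeed, by fullness $L_j$ is an atom of $\Fcal_{\tau_{j+1}-}$, and since $\Fcal_t\subseteq\Fcal_{\tau_{j+1}-}$ for $t<\tau_{j+1}$, $L_j$ remains an atom of $\Fcal_t$ throughout the interval, giving $\QQ(L\mid\Fcal_t)=\QQ(L)/\QQ(L_j)$ on $L_j$; off $L_j$ the value is $0$, since $L_j\in\Fcal_t$ and $L\subseteq L_j$. Consequently $N^L$ is piecewise constant and can only jump at the times $\tau_0,\tau_1,\ldots,\tau_k$, each such jump at $\tau_{j+1}$ being supported on $L_j$. Since siblings share a common time in $\T$ (Remark~\ref{R:atomic tree}), $L_j$ equals the union of its children in $\T$ up to nullsets (for $j\geq 1$) or of the roots of $\T$ (for $j=0$), all carrying the same time $\tau_{j+1}$; thus $L_j\times\{\tau_{j+1}\}\subseteq\bigcup_{A\in\T}A\times\{t(A)\}$ as required.

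The most delicate point is the root case $j=0$, where the ``parent'' $L_{-1}=\Omega$ is not itself in $\T$. The resolution is that in the full atomic tree produced by Theorem~\ref{T:S cont char} the maximal elements of $\T$ partition $\Omega$ up to nullsets and share a common time $\tau_0$---equivalently, $\Omega$ is an atom of $\Fcal_{\tau_0-}$; this plays the role of a ``virtual parent'' at the top. With this in hand, the same atom computation shows the jump at $\tau_0$ is supported on $\Omega\times\{\tau_0\}=\bigcup_{R\,\text{root}}R\times\{t(R)\}\subseteq\bigcup_{A\in\T}A\times\{t(A)\}$, completing the argument.
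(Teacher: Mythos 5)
Your proof is correct in substance and reaches the right conclusion, but it takes a genuinely different route from the paper. The paper's argument is top--down and brief: applying Theorem~\ref{T:S cont char}, it writes $M=x+V+H\cdot S$ with $V_T$ being $\Fcal_{\zeta(\T)}$-measurable, notes that $H\cdot S$ is continuous, observes that $M$ is constant on $\rc\zeta(\T),T\rc$ and on each strip $A\times(t(A),t(A'))$ (for $A'$ a child of $A$, using that $A$ is an atom of $\Fcal_{t(A')-}$ by fullness), and then identifies the complement of the union $D$ of these constancy regions with $\{A\times\{t(A)\}\colon A\in\T\}$. Your argument is bottom--up: after the same decomposition, you observe $X=V_T$ is a combination of leaf indicators, reduce to the leaf martingales $N^L_t=\QQ(L\mid\Fcal_t)$, and compute these explicitly as piecewise constant processes $\frac{\QQ(L)}{\QQ(L_j)}\bm 1_{L_j}$ on $[\tau_j,\tau_{j+1})$, using precisely the same atom property of the full atomic tree. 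The concrete computation is illuminating and gives sharper information (you identify the jump sizes, not just their support), but it is also longer and requires bookkeeping over the ancestor chain. You also explicitly address the reduction from general martingales to square-integrable ones, which the paper glosses over; in a final write-up you should fill the step ``the containment is preserved in the limit'' by invoking Doob's $L^1$ maximal inequality applied to $|M^n-M|$ to get uniform convergence in probability, hence almost surely along a subsequence, from which continuity off the deterministic times $t_1,\ldots,t_m$ is inherited.

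One small inaccuracy: at the root level you claim that the maximal elements of $\T$ sharing the common time $\tau_0$ is ``equivalently, $\Omega$ is an atom of $\Fcal_{\tau_0-}$.'' These are not equivalent: in the tree built in Lemma~\ref{L:psi_rep} the roots may be the several non-null atoms of $\Fcal_0$, all at time $\tau_0=0$, while $\Fcal_0=\Fcal_{0-}$ is nontrivial and $\Omega$ is not an atom of it. This misstatement is not load-bearing, though: your actual computation correctly shows that the jump at $\tau_0$ of $N^L$ is supported on the root ancestor $R_0$ of $L$, and since $R_0\times\{\tau_0\}=R_0\times\{t(R_0)\}$ is one of the target sets, the inclusion holds without appealing to $\Omega$ being an atom. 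Note that both your proof and the paper's tacitly use that all roots have time $0$ (as is the case for the tree produced in Lemma~\ref{L:psi_rep}); otherwise the paper's identity $D^c=\{A\times\{t(A)\}\colon A\in\T\}$ would leave a gap $[0,\tau_0)$ unaccounted for.
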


\begin{proof}
By Theorem~\ref{T:S cont char}, $M=x+V+H\cdot S$ for some $x\in\R$, some martingale $V$ such that $V_T$ is $\Fcal_{\zeta(\T)}$-measurable, and some $H\in L^2(S)$. In particular, $\rc\zeta(\T),T\rc\subseteq\{\Delta M= 0\}$. Next, let $A'\in\T$ be a child of $A\in\T$. Since $\T$ is full, $A$ is an atom of $\Fcal_{t(A')-}$. It is then clear that $A\times(t(A),t(A'))\subseteq\{\Delta M=0\}$. Let $D$ denote the union of all sets of this form and $\rc\zeta(\T),T\rc$. Then $\{\Delta M\ne0\}\subseteq D^c=\{ A\times\{t(A)\} \colon A\in\T\}$, which yields the assertion.
\end{proof}

We conclude the section with two examples. The first example illustrates how Theorem~\ref{T:S cont char} can be used to build semi-statically complete models in a continuous path setting. The second example shows that the statement of Theorem~\ref{T:S cont char} need not be valid if $S$ has jumps.

\begin{exa}
Here we use Theorem~\ref{T:S cont char} to build a semi-statically complete model, putting together two (dynamically) complete models by means of a $2$-dimensional atomic tree. Let $\Omega=C_0([0,T],\R)$ denote the set of real-valued continuous functions on $[0,T]$ vanishing at zero. Let $S$ be the coordinate process, $S_t(\omega)=\omega(t)$, and let $\FF$ be the right-continuous filtration generated by $S$. Let $\Pcal$ be the set of all probability measures on $\Fcal_T$. Assume there is one statically traded security $\psi = [S,S]_T - K$, for some fixed $K>0$.
Fix $t^*\in(0,T)$ and constants $\sigma_1,\sigma_2$ such that $\sigma_1> \sqrt{K/(T-t^*)}>\sigma_2>0$.
We consider two probability measures $\QQ^1, \QQ^2$ on $\Fcal_T$ such that
\[
S_t = \sigma_i W_{t-t^*}\bm 1_{\{t\ge t^*\}} \quad\text{under}\quad \QQ^i,
\]
where $W$ is a standard Brownian motion under $\QQ^i$. We now set $\QQ=\lambda \QQ^1 + (1-\lambda)\QQ^2$, where $\lambda\in(0,1)$ is determined by the calibration condition $\EE_\QQ[\psi]=0$:
\[
0 = \EE_\QQ[\psi] = \lambda\sigma_1^2(T-t^*) + (1-\lambda)\sigma_2^2(T-t^*) - K,
\]
which implies $\QQ\in\Mcal(\FF)$. We let $A_i= \{ \partial^+[S,S]_{t^*} = \sigma_i^2\}$, where $\partial^+$ denotes the right derivative, and note that $\T = \{\Omega,A_1,A_2\}$ is an atomic tree under $\QQ$ with $\dim\T=2$, and such that
\[
\EE_\QQ[\psi\mid\sigma(\T)] = \sigma_1^2(T-t^*)\bm 1_{A_1} + \sigma_2^2(T-t^*)\bm 1_{A_2} - K \not\equiv 0.
\]
Therefore, by Theorem~\ref{T:S cont char}, $\QQ$ is a semi-statically complete model. 
The representation of the corresponding filtration is given in Figure~\ref{F:ExB}.
\end{exa}

\begin{figure}[h]
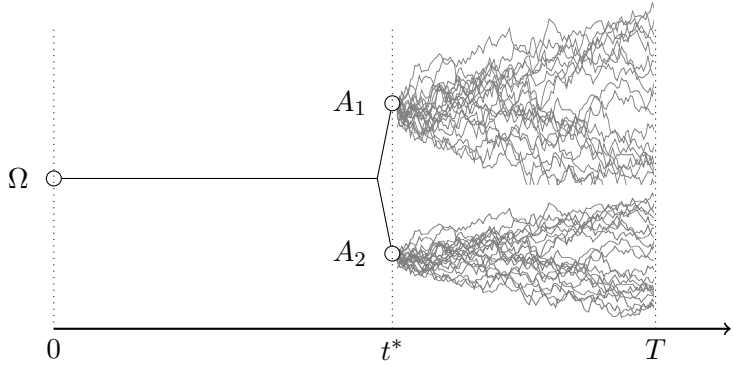

\begin{center}
\begin{tikzpicture}
    \node (myfirstpic) at (6.25,1) {\includegraphics[width=.23\textwidth,height=.2\textwidth]{BM_paths_2.pdf}};
    \node (myfirstpic) at (6.25,-1) {\includegraphics[width=.23\textwidth,height=.12\textwidth]{BM_paths_2.pdf}};

    \node[left] () at (-0.2, 0) {$\Omega$};
    \node[left] () at (4.5-0.2, 1) {$A_1$};
    \node[left] () at (4.5-0.2, -1) {$A_2$};
    
    \vertex (0) at (0, 0) {};
    \coordinate (10) at (1.7, 0) {};
    \coordinate (120) at (4.3, 0) {}; 
    \vertex (121) at (4.5, 1) {}; 
    \vertex (122) at (4.5, -1) {}; 

    \begin{scope}[every path/.style={-}]
       \draw (0) -- node[midway,above] {}  (120) ;
       \draw (120) -- node[midway,above] {}  (121) ;
       \draw (120) -- node[midway,above] {}  (122) ;
    \end{scope}
    
    \draw[dotted] (0,-2) -- (0,2);
    \draw[dotted] (4.5,-2) -- (4.5,2);
    \draw[dotted] (8,-2) -- (8,2);
    \draw[thick,->] (0,-2) node[below] {$0$} -- (4.5,-2) node[below] {$t^*$} -- (8,-2) node[below] {$T$} -- (9,-2);
\end{tikzpicture}
\end{center}
\caption{The leaves $A_1,A_2$ correspond to the two Bachelier models with volatilities $\sigma_1>\sigma_2$. Thus the variance swap $\psi=[S,S]_T-K$ is priced differently under the two models, and can be used to hedge against $A_1$ or $A_2$.
}
\label{F:ExB}
\end{figure}

\begin{exa} \label{ex:2}
Here we provide a semi-statically complete model, for which the filtration structure given in Theorem~\ref{T:S cont char} fails.
We let $\Omega=C_0([0,T],\R)\times\R_+\times\{0,1\}$, and write $(W,\theta,\xi)$ for the coordinate element. Fix $t^*\in(0,T)$ and $\sigma_1,\sigma_2>0$ with $\sigma_1\neq\sigma_2$. The price process $S$ is defined by
\[
S_t = \begin{cases}
-t,	& t<\theta\wedge t^* \\
1-\theta,  & t\ge\theta,\ \theta<t^* \\
-t^* + (\xi\sigma_1 + (1-\xi)\sigma_2) W_{t-t^*},  & t\ge t^*,\ t^*\le \theta,
\end{cases}
\]
and $\FF$ is the right-continuous filtration it generates. Let $\Pcal$ be the set of all probability measures on $\Fcal_T$. Let $\QQ$ be a probability measure on $\Fcal_T$ under which $W$, $\theta$, and $\xi$ are mutually independent, $W$ is a standard Brownian motion, $\theta$ a standard Exponential, and $\xi$ a Bernoulli with parameter $1/2$. Then under $\QQ$, $S$ behaves like a compensated one-jump Poisson process strictly prior to $t^*$. If $S$ jumps, then it stays constant until $T$. Otherwise, if there is no jump before $t^*$, $S$ continues as a Brownian motion whose volatility is either $\sigma_1$ or $\sigma_2$, depending on the outcome of the Bernoulli variable $\xi$.

It is clear that $S$ is a martingale under $\QQ$.
We now introduce the statically traded security $\psi= [S,S]_T - K$ with $K=\EE_\QQ[[S,S]_T]$. As is shown in Lemma~\ref{L:Ex2} below, this makes the model semi-statically complete. However, the filtration $\FF$ does not admit a full atomic tree $\T$ as is guaranteed in the continuous case by Theorem~\ref{T:S cont char}. Thus the statement of the theorem does not carry over to the case where $S$ has jumps.
\end{exa}

\begin{lem} \label{L:Ex2}
The model defined in Example~\ref{ex:2} is semi-statically complete.
\end{lem}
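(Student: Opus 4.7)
The plan is to exhibit the semi-static hedge directly: for every $X\in L^2(\Fcal_T,\QQ)$, produce constants $x,a\in\R$ and $H\in L^2(S,\FF,\QQ)$ with $X=x+a\psi+(H\cdot S)_T$. An alternative would be to verify extremality of $\QQ$ in $\Mcal(\FF)$ and invoke Theorem~\ref{T:ext}, but the direct construction is more informative.

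The structure of the filtration does the bulk of the work. Since $\Fcal_T$ is generated by $S$, it decomposes along the partition $\{A,B_1,B_2\}$, where $A=\{\theta<t^*\}$ and $B_i=\{\theta\ge t^*,\ \xi\sigma_1+(1-\xi)\sigma_2=\sigma_i\}$; on $A$ the only randomness is $\theta$, on $B_i$ it is the Brownian path $W|_{[0,T-t^*]}$. A direct computation of the quadratic variation gives $[S,S]_T=\bm 1_A+\sigma_1^2(T-t^*)\bm 1_{B_1}+\sigma_2^2(T-t^*)\bm 1_{B_2}$, so $\psi$ is piecewise constant on the three regimes. Because $\Fcal_{s-}$ is trivial on $\{s\le\theta\wedge t^*\}$, any predictable $H_s$ coincides a.s.\ there with a deterministic function $h(s)$; for $s>t^*$ on $A^c$, $H_s$ is $\sigma(\xi,W|_{[0,s-t^*]})$-measurable and splits as $H^{(i)}_{s-t^*}$ on $B_i$. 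Using $dS=-ds$ before $\theta\wedge t^*$, a jump of size $1$ at $\theta$ on $A$, and $dS=\sigma_i\,dW_{s-t^*}$ on $B_i$ after $t^*$, one finds
\[
(H\cdot S)_T=\Bigl(h(\theta)-\!\int_0^\theta h(s)\,ds\Bigr)\bm 1_A+\sum_{i=1}^{2}\Bigl(-\!\int_0^{t^*}\! h(s)\,ds+\sigma_i\!\int_0^{T-t^*}\! H^{(i)}_s\,dW_s\Bigr)\bm 1_{B_i}.
\]

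To solve the hedging identity, decompose $X=g(\theta)\bm 1_A+X_1\bm 1_{B_1}+X_2\bm 1_{B_2}$ with $X_i\in L^2$, and apply It\^o's representation theorem on $B_i$ to write $X_i=\mu_i+\int_0^{T-t^*} K^{(i)}_s\,dW_s$ with $K^{(i)}\in L^2(dt)$. Setting $H^{(i)}_s=K^{(i)}_s/\sigma_i$ matches the Brownian integrals; matching on $A$ reduces to the first-order linear ODE $h(\theta)-\int_0^\theta h(s)\,ds=g(\theta)+C$ with $C:=-(x+a(1-K))$, which has the integrating-factor solution $h(\theta)=g(\theta)+C+e^\theta\!\int_0^\theta e^{-s}(g(s)+C)\,ds$, so that $c:=\int_0^{t^*}h(s)\,ds$ is affine in $C$; matching on $B_i$ leaves the two scalar conditions $\mu_i+c=x+a(\sigma_i^2(T-t^*)-K)$ for $i=1,2$. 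Subtracting yields $a(\sigma_1^2-\sigma_2^2)(T-t^*)=\mu_1-\mu_2$, which uniquely determines $a$ since $\sigma_1\neq\sigma_2$, and then $C$ and $x$ follow.

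The main obstacle is to verify that $h$, $H^{(1)}$, $H^{(2)}$ assemble into a single predictable $H\in L^2(S,\FF,\QQ)$. Because $\langle S,S\rangle_T=(\theta\wedge t^*)+\sigma^2(T-t^*)\bm 1_{A^c}$, this reduces to $h\in L^2([0,t^*],dt)$ and $H^{(i)}\in L^2([0,T-t^*],dt)$; the former follows from the integrating-factor formula since $g\in L^2(\QQ|_A)$, and the latter is inherited from $K^{(i)}\in L^2(dt)$. A minor technical point is justifying that the predictable integrand may legitimately take the value $h(\theta)$ at the totally inaccessible jump time~$\theta$; this is standard in the Poisson filtration, since $\theta$ itself arises as the evaluation at $\theta$ of the left-continuous adapted process $s\mapsto s\wedge\theta$, so Borel functions of $\theta$ are realized as predictable integrands at $\theta$ in the usual way.
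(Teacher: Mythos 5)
Your proof is correct, and it reaches the conclusion via a genuinely different, more explicit route than the paper's. The paper splits $X$ by conditioning: $X=(X-\EE[X|\Fcal_{t^*}])+\EE[X|\Fcal_{t^*-}]+(\EE[X|\Fcal_{t^*}]-\EE[X|\Fcal_{t^*-}])$, then disposes of the first two terms by the Brownian and Poisson martingale representation theorems (applied on $[t^*,T]$ and $[0,t^*)$ respectively), and only the third term — the ``one bit'' of information revealed at $t^*$, namely the atom $\{\theta\ge t^*\}\cap\{\xi=1\}$ — needs the static claim. You instead decompose $X$ along the partition $\{A,B_1,B_2\}$, compute $(H\cdot S)_T$ in closed form on each cell by exploiting the explicit dynamics (drift $-dt$ before $\theta\wedge t^*$, unit jump at $\theta$ on $A$, Brownian on $B_i$), and reduce the hedging identity to a linear ODE for the deterministic part $h$ plus two scalar matching conditions, whose solvability rests exactly on $\sigma_1\ne\sigma_2$. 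The two proofs use the same underlying facts (predictable processes are deterministic on $\{s\le\theta\wedge t^*\}$; Brownian martingale representation on $B_i$; $\psi$ separates $B_1$ from $B_2$), but your route is self-contained and quantitative — it exhibits $x$, $a$, $h$, $H^{(i)}$ explicitly and exposes the integrating-factor structure — whereas the paper's is shorter and cleaner because it delegates the hard work to the representation theorems and only constructs the hedge of $\bm 1_A$ explicitly. Two small points worth being careful about: (i) in the formula for $(H\cdot S)_T$ on $B_i$ one should note that the term $-\int_0^{t^*}h(s)\,ds$ uses that $H_s=h(s)$ on $\{\theta\ge s\}$ for all $s\le t^*$, which holds by the same triviality-of-$\Fcal_{s-}$ argument you invoke; (ii) as you flag, the identification $H_\theta=h(\theta)$ on $A$ requires that the equality $H_s\bm 1_{\{s\le\theta\wedge t^*\}}=h(s)\bm 1_{\{s\le\theta\wedge t^*\}}$ holds as an identity of processes (up to evanescence), not merely for each fixed $s$ — this is the standard Jeulin--Yor/Dellacherie fact in single-jump filtrations, and your appeal to it is appropriate.
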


\begin{proof}
Consider any $X\in L^2(\Fcal_T)$ and write
\[
X = \big(X-\EE_\QQ[X|\Fcal_{t^*}]\big) + \EE_\QQ[X|\Fcal_{t^*-}] +
\big(\EE_\QQ[X|\Fcal_{t^*}]-\EE_\QQ[X|\Fcal_{t^*-}]\big).
\]
Using the martingale representation theorem for the Poisson process and Brownian motion, one readily shows that the first two terms on the right-hand side are of the form $\EE_\QQ[X]+(H\cdot S)_T$. To deal with the third term, note that $\Fcal_{t^*}=\Fcal_{t^*-}\vee\sigma(A)$ up to nullsets, where $A=\{\theta\ge t^*\}\cap\{\xi=1\}$ is an atom of $\Fcal_{t^*}$. Thus,
\[
\EE_\QQ[X|\Fcal_{t^*}]-\EE_\QQ[X|\Fcal_{t^*-}] = c\bm 1_A + Y
\]
for some constant $c$ and some $\Fcal_{t^*-}$-measurable random variable $Y$, which admits a representation $Y=y+(J\cdot S)_T$. Thus it remains to show that $\bm 1_A$ can be semi-statically replicated. This follows by taking $X=\psi$ above. Indeed, we have
\[
\psi=\sigma_1^2(T-t^*)\bm1_{A}+\sigma_2^2(T-t^*)\bm1_{B}+\bm1_{C} - K,
\]
where $A$ is as above, $B=\{\theta\ge t^*\}\cap\{\xi=0\}$, and $C=\Omega\setminus (A\cup B)$. Since $A\cup B\in\Fcal_{t^*-}$ and $\sigma_1\ne\sigma_2$, we obtain
\[
\bm 1_A = \frac{1}{(\sigma_1^2-\sigma_2^2)(T-t^*)}\psi + Y'
\]
for some $\Fcal_{t^*-}$-measurable random variable $Y'$, which as above admits a representation in terms of $S$. This concludes the proof of semi-static completeness.
\end{proof}

\section{Pricing by informed investors} \label{S:filter}

In addition to $\FF$, we now consider right-continuous filtrations $\GG=(\Gcal_t)_{0\le t\le T}$ on $(\Omega,\Fcal)$ with $\Fcal_t\subseteq\Gcal_t$ for all $t\le T$. While $\FF$ should be thought of as the information available to a typical market participant, $\GG$ includes additional information that only some investors observe. Notice that $\Mcal(\FF)$ is defined using a family $\Pcal=\Pcal(\FF)$ of probability measures on $\Fcal_T$, while $\Mcal(\GG)$ is similarly defined using a family $\Pcal(\GG)$ of probability measures on $\Gcal_T$. In order to compare the two sets of calibrated martingale measures, we always assume that
\[
\Pcal(\FF) = \{\QQ|_{\Fcal_T}\colon \QQ\in \Pcal(\GG)\}.
\]

For any filtration $\HH=(\Hcal_t)_{0\le t\le T}$, the {\em progressive enlargement} of $\FF$ with $\HH$ is the filtration $\GG=(\Gcal_t)_{0\le t\le T}$ defined by
\begin{equation}\label{eq:GFH}
\Gcal_t = \bigcap_{u>t} \Fcal_u \vee \Hcal_u.
\end{equation}
Thus $\GG$ is the smallest right-continuous filtration that contains both $\FF$ and $\HH$. Our main results consider specifications where $S$ is continuous and $\HH$ is generated by a collection of single-jump processes. By this we mean processes of the form $X\bm 1_{\lc\tau,T\rc}$, where $X$ is a random variable and $\tau$ is a {\em random time}, that is a $[0,T]\cup\{\infty\}$-valued random variable. Without loss of generality we always suppose $\tau=\infty$ on $\{X=0\}$.

To state these results, we define the following $\FF$-stopping time, which is the first time $S$ starts to move:
\[
\sigma = \inf\{t\in[0,T]\colon S_t\ne 0\}.
\]
Moreover, we say that $\FF$ and $\GG$ {\em coincide under $\QQ$} if $\Fcal_t$ equals $\Gcal_t$ up to $\QQ$-nullsets for each $t\in[0,T]$. The following theorem relates semi-static completeness for informed and uninformed investors.

\begin{thm} \label{T:finiteljump}
Assume $S$ is continuous. Let $\GG$ be given by~\eqref{eq:GFH} with $\HH$ generated by finitely many nonnegative bounded single-jump processes $X_i\bm 1_{\lc\tau_i,T\rc}$, $i=1,\ldots,p$. Assume $\tau_i>\sigma$ on $\{0<\tau_i<\infty\}$ for all $i$. Then
\begin{equation}\label{eq:finiteljump}
\ext\Mcal(\GG) = \left\{\QQ\colon \text{$\FF$ and $\GG$ coincide under $\QQ$, and $\QQ\in\ext\Mcal(\FF)$}\right\}.
\end{equation}
\end{thm}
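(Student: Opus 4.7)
\emph{The easy inclusion $\supseteq$.} Suppose $\QQ$ satisfies $\FF = \GG$ under $\QQ$ and $\QQ|_{\Fcal_T} \in \ext\Mcal(\FF)$. By Theorem~\ref{T:ext}, semi-static completeness holds under $\QQ$ with respect to $\FF$. Since $\Gcal_t = \Fcal_t$ modulo $\QQ$-nullsets for every $t$, each $X \in L^2(\Gcal_T,\QQ)$ agrees $\QQ$-a.s.\ with an element of $L^2(\Fcal_T,\QQ)$, and every $H \in L^2(S,\FF,\QQ)$ is also in $L^2(S,\GG,\QQ)$ with the same stochastic integral. So semi-static completeness transfers verbatim to $\GG$, and Theorem~\ref{T:ext} applied in the $\GG$-world gives $\QQ \in \ext\Mcal(\GG)$. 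In the same vein, one checks that $\QQ \in \Mcal(\GG)$: $S$ is a $\GG$-martingale (the $\GG$-martingale property is identical to the $\FF$-one under $\QQ$), and the calibration/square-integrability of the $\psi_i$ are unchanged.

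\emph{Setting up the hard inclusion $\subseteq$.} Fix $\QQ \in \ext\Mcal(\GG)$. By Theorem~\ref{T:ext} there is semi-static completeness under $\QQ$ with respect to $\GG$, and by Theorem~\ref{T:S cont char} (applied to the filtration $\GG$) there exists a full atomic tree $\T$ in $\GG$ such that $L^2(\Gcal_T,\QQ) = L^2(\sigma(\T)) \oplus \Scal_{\GG}(S)$ and $S$ is constant on $\lc 0,\zeta(\T)\rc$. In particular $\zeta(\T) \le \sigma$, so by the standing hypothesis $\tau_i > \sigma \ge \zeta(\T)$ on $\{0 < \tau_i < \infty\}$. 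The strategy is then to prove that $\FF$ and $\GG$ coincide under $\QQ$; once this is done, semi-static completeness w.r.t.\ $\GG$ under $\QQ$ is the same as w.r.t.\ $\FF$ under $\QQ|_{\Fcal_T}$, and a second application of Theorem~\ref{T:ext} yields $\QQ|_{\Fcal_T} \in \ext\Mcal(\FF)$.

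\emph{Coincidence of the filtrations via Jeulin–Yor.} To show $\Gcal_t = \Fcal_t$ modulo $\QQ$-nullsets at every $t$, I would invoke the extension of the Jeulin–Yor theorem proven in the appendix. Applied to the continuous $\FF$-martingale~$S$ and to the finitely many bounded single-jump processes generating $\HH$, it gives a canonical $\GG$-semimartingale decomposition in which every $\GG$-martingale splits as an $\FF$-adapted part plus finite-variation jump terms supported on $\bigcup_i \lc \tau_i,\tau_i\rc$. Because $\tau_i > \zeta(\T)$ on $\{0 < \tau_i < \infty\}$, these jump times occur strictly after the tree ends. Now, for each $A \in \T$, represent $\bm 1_A \in L^2(\sigma(\T)) \subseteq L^2(\Gcal_T,\QQ)$ via semi-static completeness as $c_A + \sum_j a_{A,j}\psi_j + (H_A \cdot S)_T$; taking $\GG$-conditional expectations at $t(A) \le \zeta(\T)$ and feeding the resulting $\GG$-martingale into the Jeulin–Yor decomposition forces $\bm 1_A$ to be $\Fcal_{t(A)}$-measurable under $\QQ$ (the enlargement contributes no new information strictly before $\zeta(\T)$ under the hypothesis on $\tau_i$; the atomic structure of~$\T$ at the root handles the possible $\tau_i = 0$ boundary case). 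This gives $\sigma(\T) \subseteq \Fcal_T$ under $\QQ$. Combined with the orthogonal decomposition $L^2(\Gcal_T,\QQ) = L^2(\sigma(\T)) \oplus \Scal_{\GG}(S)$ and the fact that $\Scal_{\GG}(S) \subseteq L^2(\Fcal_T,\QQ)$ (since $S$ is continuous and $\FF$-adapted, any $\GG$-integrand against $S$ is $\FF$-adapted up to indistinguishability under $\QQ$, again by Jeulin–Yor), one deduces $L^2(\Gcal_T,\QQ) \subseteq L^2(\Fcal_T,\QQ)$, hence $\Gcal_T = \Fcal_T$ modulo nullsets. Running the same argument with $t$ in place of $T$ (stopping everything at $t$) then gives $\Gcal_t = \Fcal_t$ modulo nullsets for every $t \in [0,T]$.

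\emph{Expected main obstacle.} The key difficulty is executing the Jeulin–Yor step cleanly: one needs to know that $\GG$-predictable integrands against the continuous $\FF$-martingale $S$ can be replaced by $\FF$-predictable integrands giving the same stochastic integral under $\QQ$, in an $L^2$-respecting way. The hypothesis $\tau_i > \sigma$ is what makes this work, since it prevents any $\HH$-generated jumps from interleaving with the dynamic structure of $S$; without it the decomposition would mix the two in ways that cannot be separated by the tree. The bounded, single-jump structure of the enlargement processes is precisely what makes the appendix extension of Jeulin–Yor applicable.
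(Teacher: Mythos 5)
Your broad setup is right --- apply Theorem~\ref{T:ext} and Theorem~\ref{T:S cont char} to the $\GG$-world, note $\zeta(\T)\le\sigma$, and invoke the appendix's Jeulin--Yor material --- but the execution of the coincidence step has genuine gaps.

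First, Lemma~\ref{L:JeulinYor} does not supply ``a canonical $\GG$-semimartingale decomposition in which every $\GG$-martingale splits as an $\FF$-adapted part plus finite-variation jump terms.'' It produces one specific $\GG$-martingale, $M_t = X\bm 1_{\{\tau\le t\}}-\int_0^{t\wedge\tau}Z_{s-}^{-1}\,dA_s$, not a decomposition theorem for arbitrary $\GG$-martingales. Relatedly, your claim that $\Scal_{\GG}(S)\subseteq L^2(\Fcal_T,\QQ)$ --- which you deduce from ``any $\GG$-integrand against $S$ is $\FF$-adapted up to indistinguishability, again by Jeulin--Yor'' --- is not justified by the appendix. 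Relation~\eqref{eq:A:H=J} identifies $\GG$-predictable with $\FF$-predictable processes only on $\lc0,\tau\rc$, not after $\tau$; asserting the identification globally is essentially assuming what you want to prove. The mechanism by which ``feeding the resulting $\GG$-martingale into the Jeulin--Yor decomposition forces $\bm1_A$ to be $\Fcal_{t(A)}$-measurable'' is also unspecified: the tree $\T$ lives in $\GG$, and nothing you write extracts $\FF$-measurability from the semi-static representation of $\bm1_A$.

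The paper's argument runs in the opposite direction. It treats the Jeulin--Yor martingale $M$ itself as the claim to be represented, localizes, and applies semi-static completeness to write $M=M_0+V+H\cdot S$ with $V_T\in L^2(\Gcal_{\zeta(\T)})$. Because $S$ is continuous, reading off the jump process gives $X\bm1_{\lc\tau\rc}=\frac{\Delta A}{Z_-}\bm1_{\lc0,\tau\rc}+\Delta V$. The hypothesis $\tau>\sigma\ge\zeta(\T)$ on $\{0<\tau<\infty\}$ together with Corollary~\ref{cor:cont} kills $\Delta V$ on the graph of $\tau$, leaving $X\bm1_{\lc\tau\rc}=\frac{\Delta A}{Z_-}\bm1_{\lc0\rc\cup\rc\sigma,\tau\rc}$, whose right-hand side is $\FF$-adapted. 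Since $X>0$ on $\{\tau<\infty\}$, this exhibits $\tau$ as an $\FF$-stopping time, so the enlarging process is already $\FF$-adapted, and $\FF=\GG$ under $\QQ$. That step --- pushing $M$ through semi-static completeness and extracting $\tau$ from the jump structure --- is the missing idea. Finally, Lemma~\ref{L:JeulinYor} covers one pair $(\tau,X)$; the paper reduces the case of $p$ processes to the base case by induction on $p$, enlarging one process at a time. You handle all $p$ simultaneously with no such reduction, which is another unaddressed gap.
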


Theorem~\ref{T:finiteljump} can be interpreted as follows: Consider an informed agent who computes super-hedging prices by maximizing over extreme points of $\Mcal(\GG)$ as in~\eqref{eq:supexp}. This agent will find that the relevant models $\QQ$ are those under which the additional information $\HH$ is in fact already contained in $\FF$ (up to nullsets, of course). Example~\ref{ex:tau} below gives a simple illustration of how this restriction can cause $\Mcal(\GG)$ to be significantly smaller than $\Mcal(\FF)$. The difference between these sets yields a potentially large difference between the robust super-hedging prices computed by the informed and uninformed agents. Further examples are given in Section~\ref{sect:ex}.

\begin{exa}\label{ex:tau}
Let $S$ be continuous with $S_0=0$.
Suppose $\HH$ is generated by the single-jump process $\bm 1_{\lc\tau,T\rc}$, where $\tau=\sup\{t\in[0,T]\colon S_t=1\}$ is the last time $S$ hits level~$1$, and where we set $\tau=0$ if this never happens. Note that $\tau$ satisfies the condition in Theorem~\ref{T:finiteljump}. Then, in order for $S$ to be a martingale for $\GG$, we must have $S<1$ almost surely. To see this, observe that a continuous martingale attaining a certain level at a stopping time (in this case, $S_\tau=1$) will return to that level infinitely many times, unless this happens at time $T$. Therefore, either $\tau=T$ or $S<1$ hence $\tau=0$. This implies $\{S_T=1\}=\{\tau=T\}=\{\tau=0\}^c\in \Gcal_0$. Since $S_0=0$, the martingale property imposes $\tau=0$ almost surely, thus forcing $S<1$.
In addition to this property, any $\QQ\in\Mcal(\GG)$ should price the statically traded securities correctly. This example will reappear in Section~\ref{sect:ex}.
\end{exa}

\begin{rem}
Note that the filtration $\GG$ considered in Theorem~\ref{T:finiteljump} is the smallest right-continuous filtration which contains $\FF$, makes the $\tau_i$ stopping times, and the $X_i\bm 1_{\{\tau_i<\infty\}}$ become $\Gcal_{\tau_i}$-measurable. Both the progressive enlargement with a random time and the initial enlargement with a random variable are included as special cases; simply take $X=1$ for the former, and $\tau=0$ for the latter. These classical types of filtration enlargement are the most studied in the literature, and our analysis draws heavily on this theory; see e.g. \cite{JY78a} and \cite{Je80a}.
\end{rem}

\begin{rem}
Due to Theorem~\ref{T:ext}, Theorem~\ref{T:finiteljump} implies that extra information of the form considered here cannot be used to complete the market under a given model. The only way an informed agent can face a semi-statically complete market is when semi-static completeness already holds for the uninformed agent. Therefore, while additional information may reduce the cost of super-replication in an incomplete market, it will in general not be enough to guarantee exact replication.
\end{rem}

\begin{proof}[Proof of Theorem~\ref{T:finiteljump}]
The only inclusion that needs proof is ``$\subseteq$''. In fact, we will prove by induction on $p$ the statement
\begin{equation} \label{eq:5.1:1}
\begin{gathered}
\text{$\ext\Mcal(\GG) \subseteq \{\QQ\colon \text{$\FF'$ and $\GG$ coincide under $\QQ$}\}$ holds} \\
\text{for any right-continuous base filtration $\FF'$, where $\GG$ is the} \\
\text{progressive enlargement of $\FF'$ with $p\ge1$ single-jump processes.}
\end{gathered}
\end{equation}
Suppose for the moment that the base case $p=1$ has been proved. Let $p\ge2$ and assume~\eqref{eq:5.1:1} is true for $p-1$. Let $\HH'$ be the filtration generated by $X_p\bm 1_{\lc\tau_p,T\rc}$, and let $\HH''$ be the filtration generated by $X_i\bm 1_{\lc\tau_i,T\rc}$, $i=1,\ldots,p-1$. Then, with the obvious notation, we have
\[
\GG = \left( \FF \vee \HH' \right) \vee \HH''.
\]
The induction assumption applied with base filtration $\FF'=\FF\vee\HH'$ implies that $\FF \vee \HH'$ and $\GG$ coincide under any $\QQ\in\ext\Mcal(\GG)$. Thus $\Mcal(\GG)=\Mcal(\FF\vee\HH')$. Thus, applying the base case with $\FF'=\FF$ we find that $\FF$ and $\FF\vee\HH'$, and hence $\FF$ and $\GG$, coincide under any $\QQ\in\ext\Mcal(\GG)$. This completes the induction step.

It only remains to prove~\eqref{eq:5.1:1} for the base case where $\HH$ is generated by a single one-jump process $X\bm 1_{\lc\tau,T\lc}$. We write $\FF'=\FF$. Fix a measure $\QQ\in\ext\Mcal(\GG)$. Define a process $M$ by
\begin{equation} \label{eq:JeulinYor_-1}
M_t = X\bm 1_{\{\tau\leq t\}}-\int_0^{t\wedge\tau}\frac1{Z_{s-}}dA_s,
\end{equation}
where $Z$ is the Az\'ema supermartingale~\eqref{eq:A:Azema} associated with $\tau$, and $A$ is the dual predictable projection of the process $X\bm 1_{\lc\tau,\infty\lc}$. By Lemma~\ref{L:JeulinYor}, $M$ is a $\GG$-martingale. A localization argument in conjunction with semi-static completeness, which follows from Theorem~\ref{T:S cont char}, yields
\begin{equation} \label{eq:JeulinYor_0}
M = M_0 + V + H\cdot S
\end{equation}
for some $S$-integrable process $H$ and some $\GG$-martingale $V$ with $V_T\in L^2(\sigma(\T))$, where $\T$ is the corresponding full atomic tree. To see this, first note that $M$ is locally bounded. Indeed, $X$ is bounded, and the integral in~\eqref{eq:JeulinYor_-1} defines a c\`adl\`ag predictable processes which is automatically locally bounded; see VII.32 in~\cite{DM80}. Let $(\rho_k)$ be a localizing sequence. Semi-static completeness and Theorem~\ref{T:S cont char} yield
\[
M^{\rho_k}_T = M_0 + V^k_T + (H^k\cdot S)_T
\]
for some $H^k\in L^2(S,\GG)$ and $V^k_T\in L^2(\Gcal_{\zeta(\T)})$. Since $\rho_k\to\infty$ and $\zeta(\T)\le T$ takes finitely many values, we have $\rho_k>\zeta(\T)$ for all sufficiently large $k$, say $k\ge k_0$. Thus, taking $\Gcal_{\zeta(\T)}$-conditional expectations and using that $S$ is constant on $\lc0,\zeta(\T)\rc$, we have
\[
V^k_T = M_{\zeta(\T)\wedge\rho_k} - M_0 = M_{\zeta(\T)} - M_0
\]
for all $k\ge k_0$. The right-hand side does not depend on $k$; denote it by $V_T$. Then \eqref{eq:JeulinYor_0} holds with $H$ given by
\[
H = H^{k_0}\bm 1_{\lc0,\rho_{k_0}\rc} + \sum_{k>k_0} H^k \bm 1_{\rc \rho_{k-1}, \rho_k\rc},
\]
as claimed.

In view of \eqref{eq:JeulinYor_-1}, \eqref{eq:JeulinYor_0}, and the continuity of~$S$, considering the jump process of $M$ yields
\begin{equation} \label{eq:JeulinYor_1}
X\bm 1_{\lc\tau\rc} = \frac{\Delta A}{Z_-} 1_{\lc0,\tau\rc} + \Delta V =  \left(\frac{\Delta A}{Z_-} + \Delta V\right)1_{\lc0,\tau\rc},
\end{equation}
where $\lc\tau\rc$ denotes the graph of $\tau$, and we use the convention $Y_{0-}=0$ for any process $Y$. Note that $\lc\tau\rc\subseteq\lc0\rc\,\cup\,\rc\sigma,T\rc$ due to the assumption that $\tau>\sigma$ on $\{0<\tau<\infty\}$. Also, $\sigma\ge\zeta(\T)$ since $S$ is constant on $\lc0,\zeta(\T)\rc$. Thus, multiplying both sides of~\eqref{eq:JeulinYor_1} by $\bm 1_{\lc0\rc\,\cup\,\rc\sigma,\infty\lc}$ and using that $\Delta V=0$ outside $\rc0,\zeta(\T)\rc$ by Corollary~\ref{cor:cont}, we obtain
\begin{equation} \label{eq:JeulinYor_2}
X\bm 1_{\lc\tau\rc}  = \frac{\Delta A}{Z_-}\bm 1_{\lc0\rc\,\cup\,\rc\sigma,\tau\rc}.
\end{equation}
Since $X>0$ on $\{\tau<\infty\}$ by assumption, this yields
\[
\tau=\inf\left\{t\in[0,T]\colon \frac{\Delta A}{Z_-}\bm 1_{\lc0\rc\,\cup\,\rc\sigma,T\rc} \ne 0  \right\},
\]
which is an $\FF$-stopping time. It follows that the right- and hence left-hand side of~\eqref{eq:JeulinYor_2} is $\FF$-adapted. Thus the process $X\bm 1_{\lc\tau,T\rc}$ with which we enlarge $\FF$ is already $\FF$-adapted, whence $\FF$ and $\GG$ coincide under $\QQ$, and $\QQ\in\ext\Mcal(\FF)$.
\end{proof}

A slight modification of the proof of Theorem~\ref{T:finiteljump} shows that in the absence of statically traded securities, neither the continuity assumption on $S$, nor the condition on the $\tau_i$, is needed.

\begin{cor}\label{cor:finiteljump}
Assume $\Psi=\emptyset$. Let $\GG$ be given by~\eqref{eq:GFH} with $\HH$ generated by finitely many nonnegative bounded single-jump processes $X_i\bm 1_{\lc\tau_i,T\rc}$, $i=1,\ldots,p$. Then
\[
\ext\Mcal(\GG) = \left\{\QQ\colon \text{$\FF$ and $\GG$ coincide under $\QQ$, and $\QQ\in\ext\Mcal(\FF)$}\right\}.
\]
\end{cor}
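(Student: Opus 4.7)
The plan is to follow the proof of Theorem~\ref{T:finiteljump} and pinpoint where its two extra hypotheses---continuity of $S$ and $\tau_i>\sigma$---are actually used, then show that both can be dropped once $\Psi=\emptyset$. The induction on $p$ reducing to the base case $p=1$ is a purely filtration-theoretic manipulation and carries over verbatim, so I concentrate on the case $p=1$.

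Fix $\QQ\in\ext\Mcal(\GG)$. Because $\Psi=\emptyset$, Theorem~\ref{T:ext} applied with the filtration $\GG$ gives the bare predictable representation property under $\QQ$: every $Y\in L^2(\Gcal_T,\QQ)$ is of the form $y+(K\cdot S)_T$. A localization argument exactly as in the proof of Theorem~\ref{T:finiteljump}, using that $X$ is bounded and that the compensator in~\eqref{eq:JeulinYor_-1} is locally bounded, then yields for the Jeulin--Yor $\GG$-martingale $M$ a representation
\[
M=M_0+H\cdot S
\]
for some $H\in L^2_{\loc}(S,\GG,\QQ)$. Crucially, no $V$-term appears here: with $\Psi=\emptyset$ the atomic-tree apparatus of Theorem~\ref{T:S cont char} is not needed, so at this stage the continuity of $S$ plays no role.

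Since $M$ is of finite variation, so is $H\cdot S$; decomposing $S=S^c+S^d$ into continuous and purely discontinuous martingale parts, the continuous local martingale $H\cdot S^c$ is of finite variation and hence vanishes, so $M-M_0=H\cdot S^d$ and $\Delta M=H\Delta S$. Comparing jumps with the explicit form of $M$ yields
\[
X\bm 1_{\lc\tau\rc}=\frac{\Delta A}{Z_-}\,\bm 1_{\lc 0,\tau\rc}+H\Delta S. \qquad (\star)
\]
In the proof of Theorem~\ref{T:finiteljump} the hypothesis $\tau>\sigma$ was used only to eliminate the $\Delta V$ term that appears in the representation when $\Psi\ne\emptyset$; since no such term is present now, that hypothesis is no longer required.

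The remaining and main obstacle is to conclude from~$(\star)$ that $\tau$ is an $\FF$-stopping time and that $X\bm 1_{\{\tau<\infty\}}\in\Fcal_\tau$, even though the extra term $H\Delta S$ on the right is a priori only $\GG$-optional. The plan here is to split $\tau$ into its $\FF$-accessible and $\FF$-totally-inaccessible parts: on the accessible part, $\Delta A_\tau\ne 0$ pins $\tau$ to a jump time of the $\FF$-predictable process $A$, so $\tau$ is $\FF$-predictable there; on the totally-inaccessible part, $\Delta A_\tau=0$ and $(\star)$ forces $H_\tau\Delta S_\tau=X>0$, so $\tau$ lies on the $\FF$-optional thin set $\{\Delta S\ne 0\}=\bigcup_n\lc T_n\rc$ with disjoint $\FF$-stopping times $T_n$. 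A further argument, applying the PRP in $\GG$ to each indicator $\bm 1_{\{\tau=T_n\}}$ and using the uniqueness of the stochastic-integral representation together with the fact that $S$ is also an $\FF$-martingale, would show that each $\{\tau=T_n\}$ is $\FF$-measurable. Then $\tau$ is an $\FF$-stopping time and, evaluating $(\star)$ at $t=\tau$, $X$ is $\Fcal_\tau$-measurable, so $\HH\subseteq\FF$ up to $\QQ$-nullsets, $\FF$ and $\GG$ coincide under $\QQ$, and Theorem~\ref{T:ext} applied in $\FF$ finally gives $\QQ\in\ext\Mcal(\FF)$.
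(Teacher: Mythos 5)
You identify the right structural simplifications: with $\Psi=\emptyset$, Theorem~\ref{T:ext} gives the bare PRP under any $\QQ\in\ext\Mcal(\GG)$, no atomic-tree term $V$ appears, and consequently neither continuity of $S$ nor $\tau_i>\sigma$ is needed. The induction to $p=1$, the localization, and the jump comparison are all consistent with the paper. (Two small points: one should take $H=0$ on $\rc\tau,T\rc$, which is harmless since $M$ is stopped at $\tau$, and then the $\bm 1_{\lc 0,\tau\rc}$ factor multiplies the $H\Delta S$ term as well; also the detour through $H\cdot S^c=0$ is unnecessary, since $\Delta(H\cdot S)=H\Delta S$ holds in general.)

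The genuine gap is in the final step, where you must pass from the $\GG$-adapted quantity $H\Delta S$ to something $\FF$-measurable. The paper resolves this in one line with the standard progressive-enlargement fact~\eqref{eq:A:H=J}: any $\GG$-predictable process $H$ agrees on $\lc 0,\tau\rc$ with some $\FF$-predictable process $J$. Replacing $H$ by $J$ turns the right-hand side of the jump identity into a function of $\FF$-adapted processes and the stochastic interval $\lc 0,\tau\rc$, after which the same argument as in Theorem~\ref{T:finiteljump} shows that $\tau$ is a.s.\ equal to an $\FF$-stopping time and $X\bm 1_{\lc\tau\rc}$ is $\FF$-adapted. Your proposed workaround --- decomposing $\tau$ into $\FF$-accessible and $\FF$-totally-inaccessible parts --- is conceptually premature (such a decomposition presupposes that $\tau$ is an $\FF$-stopping time, which is precisely what one is trying to prove), and the crucial concluding step is only asserted (``a further argument \ldots would show''), not carried out. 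In short, you correctly diagnosed the obstacle but missed the lemma the paper has ready to dispatch it, and the substitute route you sketch is both circular in its setup and incomplete.
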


\begin{proof}
Again the only non-trivial inclusion is ``$\subseteq$'', and as before it suffices to consider one single-jump process $X\bm 1_{\lc\tau,T\lc}$. Using that (dynamic) completeness holds under any $\QQ\in\ext\Mcal(\GG)$, a similar argument as the one leading to~\eqref{eq:JeulinYor_1} yields
\[
X\bm 1_{\lc\tau\rc} =  \left(\frac{\Delta A}{Z_-} + H \Delta S\right)1_{\lc0,\tau\rc}
\]
for some $H\in L^2(S,\GG)$. Let $J$ be an $\FF$-predictable process with $J\bm 1_{\lc0,\tau\rc}=H\bm 1_{\lc0,\tau\rc}$; see~\eqref{eq:A:H=J}. Replacing $H$ by $J$, one sees as before that $\tau$ is almost surely equal to an $\FF$-stopping time, and then that $X\bm 1_{\lc\tau\rc}$ is already $\FF$-adapted.
\end{proof}

\begin{rem}
Theorem~\ref{T:finiteljump} can be generalized, for example to progressive enlargements with countably many single-jump processes such that, for every $\omega$, only finitely many jumps can occur before $T$. However, some assumption on the enlargement is needed for the conclusion of the theorem to hold. Indeed, let $W$ be a standard Brownian motion under $\QQ$ generating the filtration $\GG$, define $S$ via $S_t = \int_0^t \sgn(W_s)dW_s$, and let $\FF$ be the filtration generated by $S$. Then $S$ is again a Brownian motion, and (dynamic) completeness holds with respect to both filtrations. Thus $\QQ|_{\Fcal_T}\in\ext\Mcal(\FF)$ and $\QQ\in\ext\Mcal(\GG)$, where we take $\Psi=\emptyset$ and $\Pcal(\GG)$ the set of all probability measures on $\Gcal_T$. Nonetheless, it is well-known that $\FF$ coincides with the filtration generated by $|W|$ and is strictly smaller than $\GG$; see Corollary~VI.2.2 in \cite{RY99}. We thank Monique Jeanblanc for pointing this out to us. \end{rem}

\section{Examples}\label{sect:ex}

In this section we provide examples, in discrete and in continuous time, where both sets $\Mcal(\FF)$ and $\Mcal(\GG)$ of calibrated martingales measures are compact (Examples~\ref{ex:fe_d} and~\ref{ex:fe_cts}). In those cases, robust pricing can be done over extreme measures, as in \eqref{eq:supexp}, and the results of Section~\ref{S:filter} can be used to compare pricing for agents with different sets of information. We also give examples where $\Mcal(\FF)$ is compact and $\Mcal(\GG)$ is empty, and where $\Mcal(\FF)$ is compact but $\Mcal(\GG)$ is not (Example~\ref{ex:nocpt}).

\begin{exa}[Discrete time]\label{ex:d}
Suppose $T\in\N$ and let $\Omega=[\underline s,\overline s]^T$ for some constants $\underline s<0<\overline s$. Let $S$ be the piecewise constant interpolation of the coordinate process, $S_t(\omega)=\omega(\lfloor t \rfloor)$, where $\lfloor t \rfloor$ denotes the integer part of $t$ and we set $\omega(0)=0$ by convention. Let $\FF$ be the filtration generated by $S$, and let $\Pcal$ be the set of all probability measures on $\Fcal_T$. Moreover, assume that the payoffs of the statically traded securities are continuous in $\omega$. In this setting Prokhorov's theorem implies that $\Pcal$ is weakly compact. Furthermore, the calibration and martingale restrictions become
\[
\EE_\QQ[\psi] = 0 \qquad\text{and}\qquad \EE_\QQ[ f(S_1,\ldots,S_s) (S_t-S_s) ] =0,
\]
for all integers $0\le s<t\le T$ and all continuous functions $f:\R^s\to\R$. Due to the boundedness of $\Omega$, these constraints are weakly closed. Thus $\Mcal(\FF)\subseteq\Pcal$ is weakly compact.
\end{exa}

\begin{exa}[Filtration enlargement in discrete time]\label{ex:fe_d}
We continue with the setting of Example~\ref{ex:d}. Let $\GG$ be a filtration obtained as the initial enlargement of $\FF$ with a random variable $L=L(S)$ that depends continuously on $S$, that is,
\[
\Gcal_t = \bigcap_{u>t} \Fcal_u \vee \sigma(L),\quad t\in[0,T].
\]
An example of such a random variable is $L(S)=\frac1T\sum_{t=1}^T|S_t|$. The set of martingale measures $\Mcal(\GG)$ then consists of all measures in $\Mcal(\FF)$ that satisfy the condition
\[
\EE_\QQ[f(S_1,\ldots,S_s,L(S)) (S_t-S_s) ] =0,
\]
for all integers $0\le s<t\le T$ and all bounded continuous functions $f:\R^{s+1}\to\R$. By boundedness of $\Omega$ and continuity of $L$, this constraint is weakly closed, and since $\Mcal(\FF)$ is weakly compact, then $\Mcal(\GG)$ is weakly compact as well.

Now, the by Krein-Milman theorem, both $\Mcal(\FF)$ and $\Mcal(\GG)$ can be recovered by their extreme measures, and robust pricing can be done over such measures, see \eqref{eq:supexp}.
The extreme points of $\Mcal(\FF)$ and $\Mcal(\GG)$ are related as in Corollary~\ref{cor:finiteljump}, and this allows us to appreciate the difference in the robust pricing of derivatives by an investor with or without additional information.
\end{exa}

\begin{exa}[Continuous time]\label{ex:c}

Fix $T>0$ and let $\Omega$ be the space $C_0([0,T],\R)$  of real-valued continuous functions on $[0,T]$ vanishing at zero, endowed with the topology of uniform convergence. The price process $S$ is taken to be the coordinate process on $\Omega$, and $\FF$ the right-continuous filtration it generates. We assume that the statically traded securities have payoffs $\psi_i$ that are continuous functions of $\omega$ and satisfy
\begin{equation}\label{eq:bdpsi}
|\psi_i(\omega)|\leq C\big(1+\sup_{t\le T}|\omega(t)|^\kappa\big)
\end{equation}
for some constants $C$ and $\kappa$. Fix $\overline\sigma>0$ and let $\Pcal$ denote the set of probability measures $\PP$ on $\Fcal_T$ such that $S$ is a semimartingale with quadratic variation given by
\begin{equation} \label{exA2:P_new}
[S,S]_t =\int_0^t \sigma_s^2 ds \qquad\text{with}\qquad \sigma_t^2 \le \overline\sigma^2 \quad\text{for all $t\le T$.}
\end{equation}
That is, $S$ has absolutely continuous quadratic variation, and the volatility is bounded by~$\overline\sigma$. This situation is discussed for instance in \cite{STZ11}.

\begin{lem} \label{L:M(F) cpct}
$\Mcal(\FF)$ is weakly compact.
\end{lem}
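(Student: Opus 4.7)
My plan is to apply Prokhorov's theorem: I will first establish tightness of $\Mcal(\FF)$ on $\Omega=C_0([0,T],\R)$, and then verify closedness under weak convergence. The engine for both steps is a uniform $L^p$ bound coming from the Burkholder--Davis--Gundy inequality applied to the continuous martingale $S$. Specifically, for any $\QQ\in\Mcal(\FF)$ and $p\ge 2$, the volatility bound $[S,S]_t-[S,S]_s\le\overline\sigma^2(t-s)$ yields
\[
\EE_\QQ[|S_t-S_s|^p] \le C_p\,\EE_\QQ\bigl[([S,S]_t-[S,S]_s)^{p/2}\bigr] \le C_p\,\overline\sigma^{p}(t-s)^{p/2}
\]
uniformly over $\QQ\in\Mcal(\FF)$; in particular $\sup_{\QQ\in\Mcal(\FF)}\EE_\QQ[\sup_{t\le T}|S_t|^p]<\infty$ for every $p\ge 1$.

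Tightness then follows from Kolmogorov's criterion upon picking $p>2$. For closedness, I would take a weakly convergent sequence $\QQ_n\to\QQ$ with $\QQ_n\in\Mcal(\FF)$ and check that the limit is again in $\Mcal(\FF)$. The $\FF$-martingale property of $S$ is characterized by $\EE_\QQ[(S_t-S_s)g(S_{t_1},\ldots,S_{t_k})]=0$ for all $t_1\le\cdots\le t_k\le s<t$ and bounded continuous $g$; this integrand is a continuous function on $\Omega$ of linear growth, and the uniform $L^p$ bounds above give uniform integrability under $(\QQ_n)$, so weak convergence passes to the limit. The same recipe, combined with the polynomial growth bound \eqref{eq:bdpsi}, yields $\EE_\QQ[\psi_i]=0$ and, by Fatou, $\EE_\QQ[\psi_i^2]<\infty$.

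The step I expect to be the main obstacle is verifying $\QQ\lll\Pcal$, for which it suffices to show that $\QQ$ itself belongs to $\Pcal$, i.e.\ that $[S,S]$ is absolutely continuous under $\QQ$ with density bounded by $\overline\sigma^2$. My proposed trick is to exploit the fact that $N_t:=S_t^2-\overline\sigma^2 t$ is a $\QQ_n$-supermartingale for every $n$, since $S^2-[S,S]$ is a $\QQ_n$-martingale and $[S,S]_t\le\overline\sigma^2 t$. Because $\omega\mapsto\omega(t)^2$ is continuous on $\Omega$ and uniformly integrable under $(\QQ_n)$ (from the $L^4$ bound above), the supermartingale property of $N$ passes to $\QQ$ by the same weak-limit argument. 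Since $S$ is a continuous $\QQ$-martingale, $S^2-[S,S]$ is a local $\QQ$-martingale, and consequently
\[
[S,S]_t-\overline\sigma^2 t = (S_t^2-[S,S]_t) - N_t
\]
is a continuous adapted process of bounded variation that is a local $\QQ$-supermartingale. Its Doob--Meyer decomposition forces the local-martingale part to be a continuous bounded-variation local martingale, hence constant, so the process itself is non-increasing. Since it starts at $0$, I obtain $[S,S]_t-[S,S]_s\le\overline\sigma^2(t-s)$ for all $s\le t$, giving $\QQ\in\Pcal$. Combined with the previous paragraph this yields $\QQ\in\Mcal(\FF)$, completing the proof of compactness.
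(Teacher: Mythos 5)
Your proof is correct and the tightness and closedness-of-martingale/calibration steps match the paper's proof in substance. The genuinely different piece is your verification that the weak limit $\QQ$ belongs to $\Pcal$. The paper takes a moment-theoretic route: it proves a separate combinatorial estimate (Lemma~\ref{L:DM^2 bound}) giving $\EE_{\QQ_k}\bigl[\bigl(\sum_i(S_{t_i}-S_{t_{i-1}})^2\bigr)^p\bigr]\le\overline\sigma^{2p}(t-s)^p\bigl(1+\tfrac{4^pp!}{m}\bigr)$ on equidistant grids, passes this through the weak limit, then lets $m\to\infty$ by Fatou to obtain $\EE_\QQ[([S,S]_t-[S,S]_s)^p]\le\overline\sigma^{2p}(t-s)^p$, and finally sends $p\to\infty$ to upgrade the $L^p$ bounds to the almost-sure pathwise Lipschitz bound. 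Your route instead notes that $N_t=S_t^2-\overline\sigma^2 t$ is an $\FF$-supermartingale under each $\QQ_n$ (since $S^2-[S,S]$ is a true $\QQ_n$-martingale and $[S,S]_t-\overline\sigma^2 t$ is non-increasing), passes the supermartingale inequality through the weak limit exactly as for the martingale property (the relevant integrand has quadratic growth, so the uniform $L^4$ bound suffices), and then subtracts the local $\QQ$-martingale $S^2-[S,S]$ to conclude that $[S,S]_t-\overline\sigma^2 t$ is a continuous, adapted, finite-variation local $\QQ$-supermartingale, hence non-increasing by uniqueness in the Doob--Meyer/special-semimartingale decomposition. Your approach is arguably cleaner: it bypasses Lemma~\ref{L:DM^2 bound} entirely and only needs uniform integrability at the $L^4$ level, rather than control of moments of all orders followed by an $L^p\to L^\infty$ limit. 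One small typo: the displayed identity should read $[S,S]_t-\overline\sigma^2 t = N_t-(S_t^2-[S,S]_t)$; as written, the right-hand side equals $\overline\sigma^2 t-[S,S]_t$. This does not affect the conclusion, since you only need that a supermartingale minus a local martingale is a local supermartingale.
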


\begin{proof}
We first show that $\Mcal(\FF)$ is weakly closed, so let $\QQ_k\in\Mcal(\FF)$ converge weakly to some probability measure $\QQ$. The BDG inequality and~\eqref{exA2:P_new} yield $\EE_{\QQ_k}[ \sup_{t\le T} |S_t|^p ] \le C_p \overline\sigma^pT^{p/2}$ for any $p\ge 1$, where $C_p$ is a constant that only depends on~$p$. Since the right-hand side is uniform in $k$, Theorem~3.5 in~\cite{Bi13} asserts that $\EE_{\QQ_k}[X]\to\EE_\QQ[X]$ holds for any continuous random variable $X$ that grows at most polynomially in $\sup_{t\le T}|\omega(t)|$. This immediately yields $\EE_\QQ[|S_t|]<\infty$ for all $t$, and in view of~\eqref{eq:bdpsi} also $\EE_\QQ[\psi_i^2]<\infty$ and $\EE_\QQ[\psi_i] = 0$. Moreover, one obtains $\EE_\QQ[(S_t-S_s)X]=0$ for any $\sigma(S_u:u\le s)$-measurable bounded continuous random variable $X$. A monotone class argument lets us drop continuity of $X$, showing that $S$ is a $\QQ$-martingale for the filtration $(\sigma(S_u\colon u\le t))_{t\in[0,T]}$. This is extended to the right-continuous modification $\FF$ by dominated convergence; we omit the details. Thus $S$ is a $\QQ$-martingale for the filtration $\FF$. It only remains to check that $\QQ\lll\Pcal$. Since $\QQ_k\lll\Pcal$ and hence $\QQ_k\in\Pcal$, Lemma~\ref{L:DM^2 bound} yields
\[
\EE_{\QQ_k}\Big[ \Big( \sum_{i=1}^m (S_{t_i}-S_{t_{i-1}})^2 \Big)^p \Big] \le \overline\sigma^{2p}(t-s)^p \left( 1 +  \frac{4^p p!}{m}\right)
\]
for any equidistant grid $0\le s=t_0<\cdots<t_m=t\le T$ and any $p\ge1$ with $p<m$. By the same weak convergence argument as above, this bound carries over to $\QQ$. Considering the grid points $t_i^m=s+i(t-s)/m$ and using Fatou's lemma, we therefore obtain
\[
\EE_\QQ\Big[ \Big( [S,S]_t-[S,S]_s \Big)^p \Big] \le \lim_{m\to\infty} \EE_\QQ\Big[ \Big( \sum_{i=1}^m (S_{t_i^m}-S_{t_{i-1}^m})^2 \Big)^p \Big] \le \overline\sigma^{2p}(t-s)^p
\]
for every $p\ge 1$. Taking $p$th roots of the left- and right-hand sides and sending $p$ to infinity finally yields $[S,S]_t-[S,S]_s\le \overline\sigma^2(t-s)$. This shows that $\QQ\in\Pcal$, and completes the proof that $\Mcal(\FF)$ is closed.

It remains to prove that $\Mcal(\FF)$ is tight. To this end, let $p>2$ and fix any $\varepsilon>0$. Then, the Markov and BDG inequalities together with~\eqref{exA2:P_new} yield
\[
\frac{1}{\delta}\QQ\Big(\sup_{t\leq s\leq t+\delta} |S_s-S_t|\geq\varepsilon\Big)
\le \frac{1}{\delta \varepsilon^p}\EE_\QQ\Big[ \sup_{t\leq s\leq t+\delta} |S_s-S_t|^p\Big]
\le  \frac{C_p\overline\sigma^p}{\varepsilon^p} \delta^{p/2-1}
\]
for any $\delta>0$ and any $t\in[0,T]$ (we set $S_s=S_T$ for $s>T$). By shrinking $\delta$, the right-hand side can be made arbitrarily small. Tightness now follows from Theorem~7.3 and the subsequent Corollary in \cite{Bi13}.
\end{proof}
\end{exa}

\begin{exa}[Filtration enlargement in continuous time]\label{ex:fe_cts}
We continue with the setting of Example~\ref{ex:c}. Consider a filtration $\GG$ obtained as the progressive enlargement of $\FF$ with a random time $\tau$ that depends continuously on $\omega\in C_0([0,T],\R)$. Then an element $\QQ\in\Mcal(\FF)$ lies in $\Mcal(\GG)$ if and only if the martingale condition
\[
\EE_\QQ[f(S_{t_1},\ldots,S_{t_k},\tau\wedge s)(S_t-S_s)]=0
\]
holds for all $0\le t_1<\cdots<t_k\le s\le t\le T$ and all bounded continuous functions $f:\R^k\times[0,T]\to\R$. By continuity of $\tau$, this is a closed condition. Hence $\Mcal(\GG)$ is a weakly closed subset of $\Mcal(\FF)$, thus weakly compact due to Lemma~\ref{L:M(F) cpct}. The relevant question, therefore, is whether $\Mcal(\GG)$ is nonempty.

To give a simple concrete example, suppose $\tau(\omega)=f(\omega(T))$ for some continuous function $f:\R\to[0,T]$ that is equal to zero on an interval $[-a,a]$ around the origin. Then, for any model $\QQ\in\Mcal(\FF)$ such that $|S_T|\le a$ almost surely, we have $\tau=0$ almost surely, hence $\FF$ and $\GG$ coincide under $\QQ$, and thus $\QQ\in\Mcal(\FF)$. Therefore, provided that the condition $|S_T|\le a$ is consistent with the calibration conditions $\EE_\QQ[\psi_i]=0$, $\Mcal(\GG)$ will be nonempty. It will however typically be significantly smaller than $\Mcal(\FF)$, which is advantageous to an informed agent computing robust super-hedging prices. Again, for both agents pricing can be done over extreme measures, which for $\tau$ satisfying Theorem~\ref{T:finiteljump} are related via~\eqref{eq:finiteljump}.
\end{exa}

\begin{exa}[Filtration enlargement and arbitrage or failure of compactness]\label{ex:nocpt}
There are natural enlargements under which $\Mcal(\GG)$ fails to be compact. For example, let $\GG$ be the progressive enlargement of $\FF$ with the hitting time $\tau=\sup\{t\in[0,T]\colon S_t=1\}$ from Example~\ref{ex:tau}, with $\tau=0$ if this set is empty. As explained in Example~\ref{ex:tau}, in order for $S$ to be a martingale for $\GG$ we must have $S<1$ almost surely. This is however not a closed condition. Indeed, if $\QQ_k$ is the law of Brownian motion on $[0,T]$ stopped the first time it hits $1-k^{-1}$, then $\QQ_k$ converges weakly to the law, $\QQ$ say, of Brownian motion stopped the first time it hits~$1$. But $S<1$ fails under $\QQ$, so $\QQ\notin\Mcal(\GG)$.

Furthermore, depending on the static claims, $\Mcal(\GG)=\emptyset$ can occur. For example, consider the case where there is only one static claim $\psi=(S_T-1)_+-\frac12$. The pricing condition  $\EE_\QQ[\psi]=0$ imposes $\QQ(S_T > 1)>0$, hence in this case $\Mcal(\GG)=\emptyset$. This situation is interpreted as existence of arbitrage for the informed agent.
\end{exa}

\appendix

\section{A technical lemma}\label{A:QV bound}

\begin{lem} \label{L:DM^2 bound}
Fix $\overline\sigma>0$ and let $M$ be a continuous martingale with $M_0=0$ and $[M,M]_t-[M,M]_s \le \overline\sigma^2 (t-s)$ for all $0\le s\le t\le T$. Then, for any equidistant grid $0\le s=t_0<\cdots<t_m=t\le T$ and any $p>0$ with $p<m$, we have
\[
\EE\Big[ \Big( \sum_{i=1}^m (M_{t_i}-M_{t_{i-1}})^2 \Big)^p \Big] \le \overline\sigma^{2p}(t-s)^p\left( 1 + \frac{4^p p!}{m} \right).
\]
\end{lem}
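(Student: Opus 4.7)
The plan is to expand $S_m := \sum_{i=1}^m \Delta_i^2$, with $\Delta_i := M_{t_i} - M_{t_{i-1}}$, to the $p$th power via the multinomial theorem and to bound each resulting term by iterated conditioning. The case $0 < p \le 1$ follows immediately from Jensen's inequality applied to $x \mapsto x^p$ together with the observation $\EE[S_m] = \EE[[M,M]_t - [M,M]_s] \le \overline\sigma^2(t-s)$, so I will focus on integer $p \ge 1$ (which is what is needed for the application in Lemma~\ref{L:M(F) cpct}).

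The key technical ingredient is the one-step moment bound
\[
\EE[\Delta_j^{2\beta}\mid\Fcal_{t_{j-1}}] \le (2\beta-1)!!\,(\overline\sigma^2\delta)^\beta, \qquad \delta := (t-s)/m,
\]
for each $j$ and integer $\beta \ge 1$, which I would prove by applying It\^o's formula to $t \mapsto (M_t - M_{t_{j-1}})^{2\beta}$, taking conditional expectations, and running an induction on $\beta$ that exploits the hypothesis $[M,M]_{t_j} - [M,M]_{t_{j-1}} \le \overline\sigma^2\delta$. Expanding $S_m^p$ multinomially and grouping indices by their support pattern gives
\[
\EE[S_m^p] = \sum_{k=1}^p \sum_{|I|=k} \sum_{\beta}\binom{p}{\beta} \EE\Big[\prod_{j\in I}\Delta_j^{2\beta_j}\Big],
\]
where $\beta = (\beta_j)_{j \in I}$ ranges over compositions of $p$ into $k$ positive parts. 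Iterated conditioning combined with the one-step bound yields $\EE[\prod_{j\in I}\Delta_j^{2\beta_j}] \le (\overline\sigma^2\delta)^p \prod_{j\in I}(2\beta_j-1)!!$, and the identity $(2\beta-1)!! = (2\beta)!/(2^\beta\beta!)$ together with $\binom{2\beta}{\beta} \le 4^\beta$ yields the uniform estimate $\binom{p}{\beta}\prod(2\beta_j-1)!! \le 2^p p!$.

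The final step is to treat the ``diagonal'' terms (support $k = p$, so all $\beta_j = 1$) separately from the rest. In the diagonal case the uniform bound tightens to exactly $p!$, so the diagonal contributes at most $p!\binom{m}{p}(\overline\sigma^2\delta)^p \le m^p(\overline\sigma^2\delta)^p = \overline\sigma^{2p}(t-s)^p$. For the off-diagonal terms ($k \le p-1$), the combinatorial estimate $\sum_{k=1}^{p-1}\binom{m}{k}\binom{p-1}{k-1} \le 2^p m^{p-1}$ (obtained using $\binom{m}{k}\le m^{p-1}/k!$ for $k \le p-1 < m$ and $\sum_{k\ge 1} 1/k! < 2$), combined with the uniform bound $2^p p!$, yields a contribution of at most $4^p p!\,\overline\sigma^{2p}(t-s)^p/m$. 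Summing the two produces the stated inequality. The main obstacle is arranging the constants precisely: the leading coefficient $1$ in front of $\overline\sigma^{2p}(t-s)^p$ comes from the diagonal and hinges on the sharpness of the one-step bound for $\beta = 1$ (where $(2\beta-1)!! = 1$), while obtaining the exact correction factor $4^p p!/m$ requires refusing to apply the looser uniform bound $2^p p!$ to the diagonal terms and instead exploiting the equality $\binom{p}{(1,\ldots,1)} = p!$.
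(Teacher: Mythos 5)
Your proof is correct and follows the same overall skeleton as the paper's: the one-step moment bound $\EE[\Delta_j^{2\beta}\mid\Fcal_{t_{j-1}}] \le (2\beta-1)!!\,(\overline\sigma^2\delta)^\beta$ via It\^o and induction, iterated conditioning to control mixed moments of the increments, and a multinomial expansion. Where you diverge is in the final combinatorial estimate. The paper isolates the ``correction''
\[
\sum_{k_1+\cdots+k_m=p}\binom{p}{k_1\,\cdots\,k_m}\Big(\prod_{i}(2k_i-1)!!-1\Big)\le 4^p\,p!\,m^{p-1}
\]
and proves it by a somewhat delicate induction on $p$ (shifting a unit from one coordinate, bounding the extra factor $(2l_j+1)$, etc.). You instead reorganize the multinomial sum by the \emph{support} of the index vector, use the identity $(2\beta-1)!!=(2\beta)!/(2^\beta\beta!)$ together with $\binom{2\beta}{\beta}\le 4^\beta$ to obtain the uniform per-term bound $\binom{p}{\beta}\prod(2\beta_j-1)!!\le 2^p p!$, and then count: $\binom{m}{p}$ diagonal terms contributing $p!\binom{m}{p}\le m^p$, and $\sum_{k\le p-1}\binom{m}{k}\binom{p-1}{k-1}\le 2^p m^{p-1}$ off-diagonal terms, yielding the same $m^p + 4^p p!\,m^{p-1}$. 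Both arguments land on the identical constant; yours is arguably more transparent about the origin of the $4^p p!$ factor (central binomial coefficients times composition counts), whereas the paper's induction is terser but less illuminating. You also correctly note that non-integer $p\in(0,1]$ is handled by Jensen and that only integer $p$ is used downstream in Lemma~\ref{L:M(F) cpct}; this matches the paper, whose multinomial argument likewise implicitly requires $p\in\N$.
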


\begin{proof}
The proof uses the double factorial defined by $n!!=n\times (n-2) \cdots 3\times 1$ for $n$ odd, and $(-1)!!=1$.
We claim that
\begin{equation} \label{eq:moment bound 1}
\EE[M_t^{2k}] \le (2k-1)!!\, \overline\sigma^{2k} t^k, \qquad 0\le t\le T,
\end{equation}
holds for any $k\ge 0$. We proceed by induction on $k$. For $k=0$, the statement is obviously true. Let now $k\ge1$ and assume~\eqref{eq:moment bound 1} is true for $k-1$. It\^o's lemma yields
\[
M_t^{2k} = 2k \int_0^t M_s^{2k-1}dM_s + k (2k-1) \int_0^t M_s^{2k-2} d[M,M]_s.
\]
The local martingale term is a true martingale due to the bound on $[M,M]$; we omit the argument here. Taking expectations and using the induction assumption as well as the bound on $[M,M]$, this yields
\begin{align*}
\EE[M_t^{2k}] &\le \overline\sigma^2 k (2k-1) \int_0^t \EE[ M_s^{2k-2} ] ds \\
&\le  \overline\sigma^{2k} k (2k-1) (2k-3)!! \int_0^t s^{k-1} ds \\
&=  \overline\sigma^{2k} t^k (2k-1)!!
\end{align*}
as required. Thus~\eqref{eq:moment bound 1} holds for all $k$ by induction.

Next, for ease of notation write $\Delta M_i = M_{t_i}-M_{t_{i-1}}$ and $h=t_i-t_{i-1}=(t-s)/m$. A conditioning argument in conjunction with~\eqref{eq:moment bound 1} yields, for any nonnegative integers $k_1,\ldots,k_m$,
\[
\EE[ \Delta M_1^{2k_1} \cdots \Delta M_m^{2k_m} ] \le (\overline\sigma^2 h)^{k_1+\cdots+k_m} \prod_{i=1}^m (2k_i - 1)!!
\]
Combining this with the multinomial theorem yields
\begin{align*}
\EE\Big[ \Big( \sum_{i=1}^m \Delta M_i^2 \Big)^p \Big] &= \sum_{k_1+\cdots+k_m=p}{p\choose k_1\, \cdots\, k_m} \EE[ \Delta M_1^{2k_1} \cdots \Delta M_m^{2k_m} ] \\
&\le \overline\sigma^{2p}h^p \sum_{k_1+\cdots+k_m=p}{p\choose k_1\, \cdots\, k_m} \prod_{i=1}^m (2k_i - 1)!! \\
&= \overline\sigma^{2p}h^p m^p + \overline\sigma^{2p}h^p \sum_{k_1+\cdots+k_m=p}{p\choose k_1\, \cdots\, k_m} \left( \prod_{i=1}^m (2k_i - 1)!! - 1\right) \\
&\le  \overline\sigma^{2p}h^p m^p + 4^p p!\,\overline\sigma^{2p}h^pm^{p-1},
\end{align*}
where the last line uses the combinatorial inequality~\eqref{eq:multinomial ineq} below. Since $mh=t-s$, the result follows.

It remains to prove the inequality
\begin{equation}\label{eq:multinomial ineq}
\sum_{k_1+\cdots+k_m=p}{p\choose k_1\, \cdots\, k_m} \left( \prod_{i=1}^m (2k_i - 1)!! - 1\right) \le 4^p p!\, m^{p-1}.
\end{equation}
We proceed by induction, noting first that \eqref{eq:multinomial ineq} holds for $p=1$ since the left-hand side is zero in this case. We now suppose \eqref{eq:multinomial ineq} holds for $p$ and prove it for $p+1$. Since any multi-index $(k_1,\ldots,k_m)$ summing to $p+1$ can be represented in at least one way as $(l_1,\ldots,l_{j-1},l_j+1,l_{j+1},\ldots,l_m)$ for some $j$ and some multi-index $(l_1,\ldots,l_m)$ summing to $p$, we have
\begin{align*}
&\sum_{k_1+\cdots+k_m=p+1}{p+1\choose k_1\, \cdots\, k_m} \left( \prod_{i=1}^m (2k_i - 1)!! - 1\right)  \\
&\qquad \le \sum_{j=1}^m \sum_{l_1+\cdots+l_m=p}{p\choose l_1\, \cdots\, l_m} \frac{p+1}{l_j+1} \left( (2l_j+1) \prod_{i=1}^m (2l_i - 1)!! - 1\right).
\end{align*}
This expression equals
\[
\sum_{j=1}^m \sum_{l_1+\cdots+l_m=p}{p\choose l_1\, \cdots\, l_m} \frac{p+1}{l_j+1} \left( (2l_j+1) \Big( \prod_{i=1}^m (2l_i - 1)!! - 1\Big) + 2l_j\right),
\]
which, since \eqref{eq:multinomial ineq} is assumed to hold for $p$, is bounded by
\[
2(p+1) 4^p p! m^p + 2(p+1) \sum_{j=1}^m l_j \sum_{l_1+\cdots+l_m=p}{p\choose l_1\, \cdots\, l_m} = 2(p+1)(4^p p! + p) m^p.
\]
The right-hand side is crudely bounded by $4^{p+1}(p+1)!m^p$, showing that \eqref{eq:multinomial ineq} holds for $p+1$ as desired.
\end{proof}

\section{Proof of Theorem~\ref{T:S cont char}} \label{app:proof}

In proving Theorem~\ref{T:S cont char} we treat sufficiency and necessity separately. Sufficiency is fairly straightforward, so we deal with this first. The last two statements of the theorem will follow in the course of the proof; see~\eqref{eq:X=T+HS} below regarding the direct sum decomposition of $L^2(\Fcal_T)$, and Corollary~\ref{C:S const on 0zeta(T)} for the constancy of $S$. Recall that $\QQ\in\Mcal(\FF)$ is fixed.

\begin{proof}[Proof of Theorem~\ref{T:S cont char}: sufficiency]
Let $\T$ be a full atomic tree satisfying \ref{T:S cont char:1}--\ref{T:S cont char:2}. We need to prove that any $X\in L^2(\Fcal_T)$ admits a semi-static representation.

To start with, we claim that any $X\in L^2(\Fcal_T)$ has a representation
\begin{equation} \label{eq:X=T+HS}
X = \EE[X\mid\sigma(\T)] + (H\cdot S)_T
\end{equation}
for some $H\in L^2(S)$. To prove this, let $A_1,\ldots,A_d$ denote the leaves of $\T$. Since $\T$ is full, the leaves form a partition of $\Omega$ (up to a nullset). Together with the assumption that $S$ is complete on $A_i\times[t(A_i),T]$ for each $i$, this yields
\[
X = \sum_{i=1}^d X\bm 1_{A_i} = \sum_{i=1}^d \left( x_i + ( H^i \cdot S)_T \right) \bm 1_{A_i}
\]
for some $x_i\in\R$ and some $H^i\in L^2(S)$ with $H^i=0$ on $\lc0,t(A_i)\rc$. Defining
\[
H=\sum_{i=1}^d H^i\bm 1_{A_i},
\]
we then have $H\in L^2(S)$ and $H=0$ on $\lc0,\zeta(\T)\rc$. Thus
\[
X = \sum_{i=1}^d x_i\bm 1_{A_i} + (H\cdot S)_T
\]
and, using~\eqref{eq:zeta(T)} and the optional stopping theorem,
\[
\EE[X\mid\sigma(\T)]=\EE[X\mid\Fcal_{\zeta(\T)}] = \sum_{i=1}^d x_i\bm 1_{A_i}.
\]
We deduce~\eqref{eq:X=T+HS}, as desired.

It now suffices to prove that any $\sigma(\T)$-measurable random variable $X$ (which is automatically bounded, hence square-integrable) admits a semi-static representation. In view of~\eqref{eq:X=T+HS}, we can find $H^i\in L^2(S)$ such that
\begin{equation} \label{eq:psi_i=EsTHiS}
\psi_i = \EE[\psi_i\mid\sigma(\T)] + (H^i\cdot S)_T, \qquad i=1,\ldots,n.
\end{equation}
Due to assumption~\ref{T:S cont char:2} and Remark~\ref{R:atomic tree}\ref{R:atomic tree:4}, the constant $1$ together with the random variables $\EE[\psi_i\mid\sigma(\T)]$ span $L^2(\sigma(\T))$. Thus there exist constants $a_0,\ldots,a_n$ such that
\[
X = a_0 + \sum_{i=1}^n a_i\EE[\psi_i\mid\sigma(\T)].
\]
In conjunction with~\eqref{eq:psi_i=EsTHiS} this yields
\[
a_0 + \sum_{i=1}^n a_i \psi_i = a_0 + \sum_{i=1}^n a_i \EE[\psi_i\mid\sigma(\T)] + \sum_{i=1}^n a_i (H^i\cdot S)_T = X + (H\cdot S)_T,
\]
where $H=\sum_{i=1}^n a_iH^i$ lies in $L^2(S)$. Thus $X$ admits a semi-static representation, as required. This completes the proof of sufficiency in Theorem~\ref{T:S cont char}.
\end{proof}

To prove the forward implication (necessity) of Theorem~\ref{T:S cont char} we need some preliminary results. These results, specifically Proposition~\ref{P:SM=spanM}, Lemmas~\ref{L:semimgFV} and~\ref{L:contMconst}, and Corollary~\ref{C:S const on 0zeta(T)} below, do not use the fact that $\QQ\in\Mcal(\FF)$. Indeed they are valid for any filtered probability space $(\Omega,\Fcal,\FF,\QQ)$ whose filtration $\FF$ is right-continuous.  We write $\Fcal_{0-}=\Fcal_0$ by convention.

\begin{prop} \label{P:SM=spanM}
Let $M=(M^1,\ldots,M^d)$ be a $d$-dimensional square-integrable weakly orthonormal martingale with $\EE[M_0]=0$; in particular, we assume that $\EE[M^i_TM^j_T]=\delta_{ij}$ (the Kronecker delta) for $i,j=1,\ldots,d$. Assume also that
\begin{equation} \label{L:SM=spanM:1}
\Scal(M) = \vspan\{M^1,\ldots,M^d\}.
\end{equation}
Then there exists an orthogonal matrix $Q\in O(d)$ and time points $0\le t_1<\cdots<t_m\le T$, such that the martingale $N=QM$ is of the form
\begin{equation}\label{eq:NiNiT}
N = \begin{pmatrix}N^{(1)} \\ \vdots \\ N^{(m)}\end{pmatrix} = \begin{pmatrix}N^{(1)}_T \bm 1_{\lc t_1,T\rc} \\ \vdots \\ N^{(m)}_T \bm 1_{\lc t_m,T\rc}\end{pmatrix},
\end{equation}
where each $N^{(k)}$ is a $d_k$-dimensional martingale for some $1\le d_k\le d$. Each martingale $N^{(k)}=(N^{(k),1},\ldots,N^{(k),d_k})$ satisfies
\begin{equation} \label{L:SNi=spanNi:1}
\Scal(N^{(k)}) = \vspan\{N^{(k),1},\ldots,N^{(k),d_k}\}.
\end{equation}
Moreover, for each $k$ there exist $d_k$ pairwise disjoint atoms $B^k_1,\ldots,B^k_{d_k}$ of $\Fcal_{t_k-}$ such that
\begin{equation} \label{eq:Ni_atoms}
\{ \QQ(N^{(k)}_T \ne 0 \mid\Fcal_{t_k-}) > 0\} = B^k_1 \cup \cdots \cup B^k_{d_k}.
\end{equation}
\end{prop}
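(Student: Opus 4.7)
The plan is to reduce $M$ to a piecewise-constant martingale with deterministic jump times, use orthogonality of the jump covariances to block-decompose $\R^d$ (producing $Q$), and then analyze the conditional-atomic structure within each one-jump block.

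First I would use the stable-span hypothesis at stopping times: the inclusion $M^\tau \in \vspan\{M\}$ gives a matrix $A(\tau) \in \R^{d\times d}$ with $M_\tau = A(\tau) M_T$ in $L^2$. Computing $A(\tau) = \EE[M_\tau M_T^T] = \EE[M_\tau M_\tau^T] = A(\tau) A(\tau)^T$ (using the tower property) shows $A(\tau)$ is symmetric and idempotent, and $\mathrm{rank}(A(\tau)) = \mathrm{tr}(A(\tau)) = \EE[|M_\tau|^2]$ is an integer in $\{0,\ldots,d\}$. For deterministic $\tau = t$, the map $t \mapsto \EE[|M_t|^2]$ is therefore integer-valued, nondecreasing, and right-continuous, hence a step function with finitely many jump times $t_1 < \cdots < t_m$. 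Between jumps the constancy of $\EE[|M|^2]$ promotes the submartingale $|M|^2$ to a martingale, which forces $M$ to be a.s.\ constant there. A separate argument (any nonzero continuous martingale has infinite-dimensional stable subspace) rules out a continuous part, so $M$ is purely discontinuous with jumps only at the $t_k$.

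Next I would show the jump directions sit orthogonally in $\R^d$. Setting $X_k = \Delta M_{t_k}$ and $R_k = \EE[X_k X_k^T]$, martingale-increment orthogonality gives $\EE[X_k X_l^T] = 0$ for $k \ne l$ and hence $I = \sum_k R_k$ (after reducing $M_0 = 0$). Each $R_k$ is PSD with $\mathrm{tr}(R_k) = d_k$, where $d_k$ is the integer jump of $\EE[|M_t|^2]$ at $t_k$. Cauchy--Schwarz yields $\mathrm{tr}(R_k^2) \ge d_k^2/\mathrm{rank}(R_k) \ge d_k$, and the identity
\[
d = \mathrm{tr}(I^2) = \sum_k \mathrm{tr}(R_k^2) + \sum_{k\ne l} \mathrm{tr}(R_k R_l),
\]
combined with non-negativity of the PSD cross terms $\mathrm{tr}(R_k R_l)$, forces equality throughout. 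Each $R_k$ is then an orthogonal projection onto a $d_k$-dimensional subspace $\Lambda_k \subseteq \R^d$, and $\mathrm{tr}(R_k R_l) = 0$ for $k \ne l$ implies the $\Lambda_k$ are pairwise orthogonal with $\bigoplus_k \Lambda_k = \R^d$. Concatenating orthonormal bases of $\Lambda_1,\ldots,\Lambda_m$ (in that order) into the rows of $Q^T$ produces the claimed $Q \in O(d)$: the $k$-th block of $N = QM$ has vanishing jump at every $t_l \ne t_k$ because $X_l \in \Lambda_l \perp \Lambda_k$, giving $N^{(k)} = N^{(k)}_T \bm 1_{\lc t_k, T\rc}$. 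The identity $\Scal(N^{(k)}) = \vspan\{N^{(k)}\}$ then follows because $\Scal(N^{(k)}) \subseteq \Scal(N) \cap \{\text{martingales jumping only at } t_k\}$, and by the orthogonal decomposition the right-hand side coincides with $\vspan\{N^{(k)}\}$.

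The main obstacle will be the atomic decomposition~\eqref{eq:Ni_atoms}. Reducing to the one-jump block $N^{(k)} = Y \bm 1_{\lc t_k, T\rc}$ with $Y = \Delta N^{(k)}_{t_k}$, the stable-span hypothesis translates to the pointwise condition: for every $\Fcal_{t_k-}$-measurable $f:\Omega\to\R^{d_k}$ there exists a constant $a \in \R^{d_k}$ with $f(\omega) - a \in \ker\Sigma(\omega)$ a.s., where $\Sigma(\omega) := \EE[YY^T \mid \Fcal_{t_k-}](\omega)$. I would extract the atoms iteratively: identify a direction $e \in \R^{d_k}$ in the conditional range of $\Sigma$, show that the $\Fcal_{t_k-}$-measurable set where $\Sigma e \ne 0$ must be a single atom (otherwise one could build an $\Fcal_{t_k-}$-measurable indicator $f$ whose projection onto $\mathrm{range}(\Sigma)$ is non-constant there, contradicting the condition), then restrict to the orthogonal complement of $e$ and recurse. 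Tracking dimensions via the rank-trace constraint $\EE[\mathrm{tr}(\Sigma)] = d_k$ should ensure that exactly $d_k$ pairwise disjoint atoms are produced and that their union is the conditional support $\{\QQ(N^{(k)}_T \ne 0 \mid \Fcal_{t_k-}) > 0\}$. This final bookkeeping is where the most careful measure-theoretic work will lie.
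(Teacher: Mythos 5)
Your Steps 1 and 2 are essentially correct, and the observation that $C_t := \EE[M_t M_T^\top]$ is a symmetric idempotent (hence a projection), so that $t\mapsto\EE[|M_t|^2]=\rk C_t$ is integer-valued, nondecreasing, and right-continuous, gives a genuinely slicker route to the jump times $t_1<\cdots<t_m$ than the paper's induction on $d$. The constancy of $M$ between jump times also follows cleanly since $\EE[|M_t-M_s|^2]=\EE[|M_t|^2]-\EE[|M_s|^2]$; the sentence about continuous martingales and infinite-dimensional stable subspaces is unnecessary. Two points need patching in Step 2, however. First, the Cauchy--Schwarz chain $\tr(R_k^2)\ge d_k^2/\rk(R_k)\ge d_k$ silently assumes $\rk R_k\le d_k$, which you haven't established; it is cleaner (and makes this detour unnecessary) to note that $R_k=\EE[X_k X_k^\top]=C_{t_k}-C_{t_k-}$, a difference of nested orthogonal projections, hence itself a projection of rank $d_k$, so $\tr(R_k^2)=d_k$ immediately. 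Second, the identity $\sum_k R_k=\id$ assumes $M_0=0$; what is given is only $\EE[M_0]=0$, so you must treat $t_1=0$ (with $X_1=\Delta M_0=M_0$ per the paper's convention) on the same footing as the other jumps rather than ``reducing $M_0=0$''.

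The real gap is in Step 3, the atomic decomposition. The iterative direction-extraction is not well specified: the ``conditional range of $\Sigma$'' is the random subspace $\mathrm{range}(\Sigma(\omega))$, and no fixed $e\in\R^{d_k}$ need lie in it for all (or even a.e.\ relevant) $\omega$. Concretely, with $d_k=2$, $B^k=B_1\cup B_2$ two atoms of $\QQ$-mass $1/2$ each, and $\Sigma=2e_1e_1^\top$ on $B_1$, $\Sigma=2e_2e_2^\top$ on $B_2$, the stable-span hypothesis holds, the budget $\EE[\tr\Sigma]=2$ is respected, yet the two conditional ranges are $\vspan\{e_1\}$ and $\vspan\{e_2\}$, whose intersection is $\{0\}$; there is no uniform direction $e$ to seed the recursion. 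If instead one allows $e$ to lie in the range only for \emph{some} $\omega$, then the claim ``$\{\Sigma e\ne 0\}$ is a single atom'' requires exactly the non-trivial fact that under the stable-span hypothesis the conditional ranges on distinct atoms are mutually orthogonal --- and this is not implied by the parenthetical ``(otherwise one could build an $\Fcal_{t_k-}$-measurable indicator $f$ whose projection onto $\mathrm{range}(\Sigma)$ is non-constant)'', since such an $f$ can often still be matched by a constant $a$ (one only needs $a$ to project correctly on each atom's kernel, not to equal $f$). What actually closes the argument is the paper's eigenvalue device: for any bounded $\Fcal_{t_k-}$-measurable $h$ supported on $B^k$, the stable-span identity $hY=AY$ for a fixed $d_k\times d_k$ matrix $A$ forces $h$ to be an eigenvalue of $A$ on $B^k$, so $h$ takes at most $d_k$ values; as $h$ was arbitrary, $B^k$ is a union of at most $d_k$ non-null atoms (padding with nullsets to get exactly $d_k$ pairwise disjoint ones). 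Your $\Sigma$-based reformulation is a valid restatement of the hypothesis, but the proposed recursion does not deliver \eqref{eq:Ni_atoms} without an additional argument that you have not supplied.
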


\begin{rem}
Of course, $d_1+\cdots+d_m=d$. Furthermore, note that some of the atoms $B^k_i$ may be nullsets.
\end{rem}

\begin{proof}
We first prove the existence of $Q\in O(d)$ and $0\le t_1<\cdots<t_m\le T$ such that $N=QM$ is of the form~\eqref{eq:NiNiT}. We proceed by induction on $d$. The case $d=0$ is vacuously true, so we need only prove the result under the assumption that it holds with $d$ replaced by any $r<d$.

For any fixed $t\in[0,T]$ and $i\in\{1,\ldots,d\}$, define a bounded predictable process $H=(H^1,\ldots,H^d)$ by $H^i=\bm 1_{\lc 0,t\rc}$ and $H^j=0$ for $j\ne i$. Then~\eqref{L:SM=spanM:1} implies
\begin{equation} \label{eq:MtCMT}
M^i_t = (H\cdot M)_T = \sum_{j=1}^d C^{ij}_t M^j_T
\end{equation}
for some deterministic constants $C^{ij}_t$. Let $C_t$ denote the matrix with elements $C^{ij}_t$. We claim that
\begin{equation} \label{eq:Cij_t_0}
\text{$C_t$ is symmetric and the map $t\mapsto C_t$ is c\`adl\`ag.}
\end{equation}
Indeed, orthonormality of $M$, \eqref{eq:MtCMT}, and the martingale property of $M$ yield
\begin{equation} \label{eq:Cij_t}
C^{ij}_t = \sum_{k=1}^d C^{ik}_t\, \EE[M^k_TM^j_T] = \EE[M^i_tM^j_T] = \EE[M^i_tM^j_t] = \EE[ [M^i,M^j]_t],
\end{equation}
showing that $C_t$ is symmetric. Moreover, the Kunita-Watanabe inequality followed by the Cauchy-Schwartz inequality and square-integrability of $M$ yield
\[
|[M^i,M^j]_t|\le [M^i,M^i]_t^{1/2}[M^j,M^j]_t^{1/2} \le [M^i,M^i]_T^{1/2}[M^j,M^j]_T^{1/2} \in L^1.
\]
Thus, since $[M^i,M^j]$ is c\`adl\`ag, \eqref{eq:Cij_t} and the dominated convergence theorem imply~\eqref{eq:Cij_t_0}.

Now, define
\[
t_m = \inf\{t \ge0 : \rk C_t = d\}.
\]
Since $C_T=\id$ due to~\eqref{eq:Cij_t}, we have $t_m\in[0,T]$. Since $C_t$ is c\`adl\`ag, we have $\rk C_{t_m}=d$. If $t_m=0$, then~\eqref{eq:MtCMT} yields $M_T = C_0^{-1}M_0$, which is $\Fcal_0$-measurable. Thus $M$ is constant, and~\eqref{eq:NiNiT} holds with $m=1$ and $Q$ the identity. If instead $t_m>0$, there exists some $s\in[0,t_m)$ such that $\rk C_t$ is constant on $[s,t_m)$. Let $r=\rk C_s<d$ be this constant value.

Since $C_s$ is symmetric, we have $C_s=Q_1^\top \Lambda Q_1$ for some $Q_1\in O(d)$ and some diagonal matrix $\Lambda=\Diag(\lambda_1,\ldots,\lambda_r,0,\ldots,0)$, where $\lambda_1,\ldots,\lambda_r$ are nonzero. Define a new orthonormal martingale $\widehat M$ and matrix map $\widehat C$ by
\[
\widehat M = Q_1 M \qquad \text{and} \qquad \widehat C=Q_1CQ_1^\top.
\]
Due to~\eqref{L:SM=spanM:1}, we have
\begin{equation} \label{eq:SMhat=spanMhat:1}
\Scal(\widehat M) = \vspan\{\widehat M^1,\ldots,\widehat M^d\}.
\end{equation}
Moreover, in view of~\eqref{eq:MtCMT},
\[
\widehat M_t = \widehat C_t \widehat M_T \qquad \text{for all $t\in[0,T]$}.
\]
We establish some properties of $\widehat C$. Since $\rk\widehat C_{t_m}=d$, we have $\widehat M_T=\widehat C_{t_m}^{-1}\widehat M_{t_m}$, which is $\Fcal_{t_m}$-measurable. Thus $\widehat M$ is constant on $[t_m,T]$, whence $\widehat C_t=\id$ there. Next, we have by construction that $\widehat C_s$ is diagonal with $\widehat C_s^{ii}=\lambda_i\ne0$ for $i=1,\ldots,r$. Thus $\widehat M^{i}_T=\lambda^{-1}_i\widehat M^{i}_s$ is $\Fcal_s$-measurable for $i=1,\ldots,r$, so that $\widehat M^{i}$ is constant on $[s,T]$. It follows that
\[
\widehat C^{ij}_t=\sum_{k=1}^d \widehat C^{ik}_t\, \EE[\widehat M^k_T\widehat M^j_T] =\EE[\widehat M^i_t\widehat M^j_T]=\EE[\widehat M^i_T\widehat M^j_T]=\delta_{ij}
\]
for all $t\in[s,T]$, $i\in\{1,\ldots,r\}$, and $j\in\{1,\ldots,d\}$. Since $\rk \widehat C_t=r$ for $t\in[s,t_m)$, this forces $\widehat C^{ij}_t=0$ for $i,j\in\{r+1,\ldots,d\}$ and $t\in[s,t_m)$. To summarize, we have shown that
\[
\widehat C_t = \begin{pmatrix} \id & 0 \\ 0 & \id\, \bm 1_{[t_m,T]}(t)\end{pmatrix} \in \S^{r+(d-r)} \quad\text{for}\quad t\in[s,T].
\]
Defining $\widehat M'=(\widehat M^1,\ldots,\widehat M^r)$ and $\widehat M''=(\widehat M^{r+1},\ldots,\widehat M^d)$, it follows that
\begin{equation} \label{eq:Mi1j}
\widehat M''=\widehat M''_T\bm 1_{\lc t_m,T\rc},
\end{equation}
and that $\widehat M'$ is constant on $[s,T]$. This immediately implies that $\widehat M'$ and $\widehat M''$ are strongly orthogonal. Thus, for any $H\in L^2(\widehat M')$, we have that $(H\cdot \widehat M')_T$ is orthogonal to $\widehat M^i_T$ for $i=r+1,\ldots,d$. Consequently in the representation
\[
(H\cdot \widehat M')_T = a_1 \widehat M^1_T + \cdots + a_d \widehat M^d_T,
\]
which exists by~\eqref{eq:SMhat=spanMhat:1}, we actually have $a_{r+1}=\cdots=a_d=0$. This proves the non-trivial inclusion in
\begin{equation} \label{L:SM=spanM:3}
\Scal(\widehat M') = \vspan\{\widehat M^1,\ldots,\widehat M^r\}.
\end{equation}
Since $\widehat M'$ itself is an $r$-dimensional weakly orthonormal martingale with $\EE[\widehat M'_0]=0$, we may now apply the induction assumption to get $Q_1'\in O(r)$ and time points $0\le t_1< \cdots < t_{m-1}\le T$ such that $N'=Q_1'\widehat M'=(N^1,\ldots,N^r)$ satisfies
\begin{equation}\label{eq:Ni1j}
N' = \begin{pmatrix}N^{(1)} \\ \vdots \\ N^{(m-1)}\end{pmatrix}, \qquad\text{where}\qquad N^{(k)}=N^{(k)}_T \bm 1_{\lc t_k,T\rc},\qquad k=1,\ldots,m-1.
\end{equation}
Since $\widehat M'$, and hence $N'$, is constant on $[s,T]$, we in fact have $t_{m-1}< t_m$. Defining the matrix
\[
Q = Q_2Q_1 \quad\text{where}\quad Q_2 = \begin{pmatrix} Q_1' & 0 \\ 0 & \id \end{pmatrix} \in O(d),
\]
it follows from \eqref{eq:Mi1j} and \eqref{eq:Ni1j} that $N=QM=(N^{(1)},\ldots,N^{(m-1)},\widehat M'')$ is of the desired form. This completes the proof of~\eqref{eq:NiNiT}.

Next, \eqref{L:SNi=spanNi:1} follows by the same argument that gave~\eqref{L:SM=spanM:3}, using the obvious fact that the martingales $N^{(k)}$ are mutually strongly orthogonal.

Finally, for each $k\in\{1,\dots,m\}$ we locate $d_k$ pairwise disjoint atoms $B^k_1,\ldots,B^k_{d_k}$ of $\Fcal_{t_k-}$ such that~\eqref{eq:Ni_atoms} holds (recall our convention $\Fcal_{0-}=\Fcal_0$). To this end, define
\[
B^k = \{ \QQ(N^{(k)}_T \ne 0 \mid\Fcal_{t_k-}) > 0\} \in \Fcal_{t_k-}.
\]
We need to decompose $B^k$ into $d_k$ atoms. Consider any bounded $\Fcal_{t_k-}$-measurable random variable $h$ vanishing outside $B^k$. Define the predictable process $H=h\bm 1_{\lc t_k,T\rc}$. Due to~\eqref{eq:NiNiT} and~\eqref{L:SNi=spanNi:1}, we then have
\[
hN^{(k),i}_T = (H\cdot N^{(k),i})_T = \sum_{j=1}^{d_k} a_{ij} N^{(k),j}_T, \qquad i=1,\ldots,d_k,
\]
for some constants $a_{ij}$. Note that this holds also when $t_k=0$, due to our convention regarding the time-zero value of stochastic integrals. Letting $A$ be the $d_k\times d_k$ matrix with elements $a_{ij}$, we write this in vector form as
\[
h N^{(k)}_T = A N^{(k)}_T.
\]
This implies that $h$ is an eigenvalue of $A$ on $B^k$. To see this, note that
\[
\{\text{$h$ is an eigenvalue of $A$}\} \supseteq \{N^{(k)}_T\ne 0\}.
\]
Taking $\Fcal_{t_k-}$-conditional probabilities and using that $h$ is $\Fcal_{t_k-}$-measurable, we get
\[
\bm 1_{\{\text{$h$ is an eigenvalue of $A$}\}} \ge \QQ(N^{(k)}_T \ne 0 \mid\Fcal_{t_k-}) > 0 \quad \text{on} \quad B^k,
\]
by definition of $B^k$. This shows that $h$ is an eigenvalue of $A$ on $B^k$. Since the $d_k\times d_k$ matrix $A$ can have at most $d_k$ distinct eigenvalues, the random variable $h$ can take at most $d_k$ different values. Since $h$ was arbitrary, we deduce the existence of a decomposition
\[
B^k = B^k_1\cup \cdots \cup B^k_{d_k}
\]
into (possibly trivial) atoms of $\Fcal_{t_k-}$, as required.
\end{proof}

\begin{rem}
The conclusions~\eqref{eq:NiNiT} and~\eqref{eq:Ni_atoms} of Proposition~\ref{P:SM=spanM} are not quite strong enough to imply~\eqref{L:SM=spanM:1}. More specifically, the decomposition~\eqref{eq:Ni_atoms} into atoms is not enough to imply \eqref{L:SNi=spanNi:1}; one also has to account for the interplay between the processes $N^{(k),i}$ on each atom $B^k_j$. This is captured by the relation
\[
\vspan\{N^{(k),1},\ldots,N^{(k),d_k}\} = \vspan\{ \bm 1_{B^k_j}N^{(k),i} : i,j = 1,\ldots,d_k\},
\]
which is a consequence of~\eqref{L:SNi=spanNi:1} and~\eqref{eq:Ni_atoms}. It is not hard to show that together with~\eqref{eq:NiNiT} and~\eqref{eq:Ni_atoms}, this actually implies~\eqref{L:SM=spanM:1}, yielding a converse of Proposition~\ref{P:SM=spanM}. As the current formulation of Proposition~\ref{P:SM=spanM} is sufficient for our purposes, we refrain from developing this line of reasoning further.
\end{rem}

\begin{lem} \label{L:semimgFV}
Let $X$ be a semimartingale with $X_t = f(t)$ for all $t<\tau$, where $f$ is a deterministic function and $\tau$ is a stopping time. Then $f$ is of finite variation on $[0,t^*]$ for any $t^*$ such that $\QQ(\tau\ge t^*)>0$.
\end{lem}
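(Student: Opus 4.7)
The plan is to reduce to the case $\tau \geq t^*$ almost surely via conditioning, construct a literally deterministic process that differs from $X^{t^*}$ by an explicit finite-variation correction, and then invoke Stricker's theorem to transport the semimartingale property to the trivial filtration, where local martingales are forced to be constant.

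First I would set $A = \{\tau \geq t^*\}$ and pass to the conditional probability $\QQ_A(\cdot) := \QQ(\cdot \mid A)$. Semimartingales are preserved under absolutely continuous changes of measure (via the Bichteler-Dellacherie characterization in terms of convergence in probability), so $X$ remains a semimartingale under $\QQ_A$, while now $\tau \geq t^*$ holds $\QQ_A$-almost surely. Consequently $X_t = f(t)$ $\QQ_A$-a.s. for every $t < t^*$, which together with the càdlàg property of $X$ forces $f$ to be càdlàg on $[0, t^*)$ and ensures that $f(t^*-) := \lim_{s \uparrow t^*} f(s)$ exists and is finite. I then introduce the literally deterministic process $Y_t := f(t \wedge t^*)$, with $f(t^*)$ redefined to $f(t^*-)$ so that $Y$ is càdlàg. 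Under $\QQ_A$ one has
\[
Y - X^{t^*} = \big(f(t^*-) - X_{t^*}\big)\, \bm 1_{[t^*, T]} \qquad \text{a.s.},
\]
and the right-hand side is a càdlàg $\FF$-adapted process with a single (possibly random) jump at the deterministic time $t^*$, hence of finite variation and a semimartingale. Adding this correction to $X^{t^*}$ shows that $Y$ is a $(\QQ_A, \FF)$-semimartingale.

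Finally, since $Y$ is literally deterministic it is adapted to the trivial filtration $\GG_t = \{\emptyset, \Omega\}$, and by Stricker's theorem $Y$ is also a $(\QQ_A, \GG)$-semimartingale. In $\GG$, every stopping time is a deterministic constant and every measurable random variable is constant, so any $\GG$-martingale starting at zero is identically zero; localization extends this to local martingales. The semimartingale decomposition $Y = Y_0 + N + V$ then forces $N \equiv 0$ and leaves $V$ a deterministic function of finite variation. Hence $t \mapsto f(t \wedge t^*)$ is of bounded variation on $[0, T]$, which yields the claim, since the possible discrepancy between the original $f(t^*)$ and $f(t^*-)$ costs only a single finite jump.

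The main obstacle I anticipate is the transition from the almost sure identity $X_t = f(t)$ on $[0, t^*)$ to a literally deterministic process suitable for Stricker's theorem; Stricker requires genuine $\GG$-adaptedness rather than adaptedness up to null sets. This is exactly why I isolate the finite-variation correction at $t^*$ and absorb the possible mismatch between $X_{t^*}$ and $f(t^*-)$ into an explicit single-jump semimartingale, rather than trying to argue via a modification on a $\QQ_A$-null set.
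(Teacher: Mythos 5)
Your proof is correct, but it takes a genuinely different route from the paper's. The paper argues directly by contradiction using the characterization of semimartingales as good integrators: assuming $f$ has infinite variation on $[0,t^*]$, it constructs bounded elementary predictable integrands $H^n = \widetilde H^n \bm 1_{\lc 0,\tau\rc}$ (sign functions along refining partitions, killed at $\tau$) whose stochastic integrals $(H^n\cdot X)_{t^*}$ diverge on the positive-probability event $\{\tau\ge t^*\}$; this contradicts the boundedness in probability of the family $\{(H\cdot X)_{t^*}: H \text{ elementary}, |H|\le 1\}$, which is a consequence of the bounded convergence theorem for stochastic integrals (equivalently, of the Bichteler--Dellacherie theorem). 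Your argument instead conditions on $A=\{\tau\ge t^*\}$, invokes the stability of the semimartingale property under the absolutely continuous change of measure $\QQ_A\ll\QQ$ (Girsanov--Lenglart), corrects $X^{t^*}$ by the explicit single-jump finite-variation process $(f(t^*-)-X_{t^*})\bm 1_{[t^*,T]}$ to obtain a literally deterministic semimartingale $Y$, and then applies Stricker's theorem to shrink to the trivial filtration, where all local martingales vanish and the decomposition $Y=Y_0+N+V$ collapses to $Y=Y_0+V$ with $V$ of finite variation. Both proofs ultimately rest on deep facts equivalent to or derived from the Bichteler--Dellacherie characterization, but they package this differently: the paper's proof is shorter, needs no change of measure, and dodges the bookkeeping around $t^*$ entirely, whereas your argument is more structural, reduces the question to the cleanest possible filtration at the cost of invoking Stricker and measure-change invariance, and as you correctly anticipated requires careful handling of the possible mismatch between $X_{t^*}$ and $f(t^*-)$ to achieve genuine (not merely a.s.) adaptedness to the trivial filtration. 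Your treatment of that edge case is correct, and the final observation that the redefinition of $f(t^*)$ costs only one finite jump properly closes the gap between variation of $Y$ on $[0,T]$ and variation of $f$ on $[0,t^*]$.
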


\begin{proof}
Suppose for contradiction that $f$ is not of finite variation on $[0,t^*]$. Then there exist partitions $0=t_0^n<\cdots<t_{N_n}^n=t^*$ indexed by $n$ such that $\sum_{i=1}^{N_n} |f(t^n_i) - f(t^n_{i-1})|\to\infty$ as $n\to\infty$. For each $n$, define the elementary predictable process $H^n=\widetilde H^n\bm 1_{\lc 0,\tau\rc}$, where $\widetilde H^n=\sum_{i=1}^{N_n}\sgn(f(t^n_i)-f(t^n_{i-1}))\bm 1_{\rc t^n_{i-1},t^n_i\rc}$. Then
\[
(H^n\cdot X)_{t^*} = \sum_{i=1}^{N_n} |f(\tau\wedge t^n_i) - f(\tau\wedge t^n_{i-1})|,
\]
whence $\QQ(\liminf_n (H^n\cdot X)_{t^*} = \infty) \ge \QQ(\tau\ge t^*) >0$. Since $X$ is a semimartingale and $|H^n|\le 1$, this gives the desired contradiction. Indeed, as a direct consequence of the bounded convergence theorem for stochastic integrals, the set
\[
\big\{ (H\cdot X)_{t^*} \colon \textrm{$H$ is elementary predictable with $|H|\le 1$}\big\}
\]
is bounded in probability.
\end{proof}

\begin{lem} \label{L:contMconst}
Let $M$ be a continuous local martingale, and let $B$ be an atom of $\Fcal_{t^*}$ or of $\Fcal_{t^*-}$ for some $t^*>0$. Then $M_t=M_0$ on $B$ for all $t<t^*$.
\end{lem}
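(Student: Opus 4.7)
The plan is to show that the quadratic variation $[M,M]$ vanishes on $B$ up to time $t^*$ and then invoke the fact that a continuous local martingale with vanishing quadratic variation is constant. First, since $\Fcal_t \subseteq \Fcal_{t^*-}$ (respectively $\Fcal_{t^*}$) for $t<t^*$, each $M_t$ is measurable with respect to the sigma-algebra of which $B$ is an atom, and hence equal to a deterministic constant $\tilde f(t)$ on $B$ up to nullsets; in particular, the $\Fcal_0$-measurable random variable $M_0$ is constant on $B$, and we may take $\tilde f(0)$ equal to that constant. A standard separability argument using a countable dense subset of $[0,t^*)$ together with path-continuity of $M$ then yields a continuous function $\tilde f\colon[0,t^*)\to\R$ such that $M_t=\tilde f(t)$ holds simultaneously for all $t<t^*$ on a subset of $B$ of full conditional measure.

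Next, I would pass to the conditional probability $\QQ_B=\QQ(\cdot\mid B)$. Since $\QQ_B\ll\QQ$, the process $M$ remains a semimartingale under $\QQ_B$, and by the previous step $M_t=\tilde f(t)$ for all $t<t^*$ holds $\QQ_B$-a.s. Applying Lemma~\ref{L:semimgFV} under $\QQ_B$, with $X=M$, $f=\tilde f$, and the constant stopping time $\tau\equiv t^*_\varepsilon$ for arbitrary $t^*_\varepsilon<t^*$, one concludes that $\tilde f$ is of finite variation on $[0,t^*_\varepsilon]$, and letting $t^*_\varepsilon\uparrow t^*$ yields finite variation on all of $[0,t^*)$.

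Because $\tilde f$ is continuous and of finite variation, its classical quadratic variation vanishes; that is, $\sum_i(\tilde f(t_i)-\tilde f(t_{i-1}))^2\to 0$ along any sequence of partitions of $[0,t]$ with mesh tending to zero, for every $t<t^*$. Since $M_t=\tilde f(t)$ pathwise on $B$, and since $[M,M]$ is the uniform-on-compacts-in-probability limit of the same sums computed from $M$, we deduce that $[M,M]_t=0$ on $B$ for every $t<t^*$. Finally, the stopping time $\tau_0=\inf\{t\colon[M,M]_t>0\}$ satisfies $\tau_0\geq t^*$ on $B$, and the stopped process $M^{\tau_0}$ is a continuous local martingale with vanishing quadratic variation, hence identically equal to $M_0$. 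Consequently $M_t=M^{\tau_0}_t=M_0$ on $B$ for all $t<t^*$, as desired.

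The main obstacle I anticipate is the rigorous application of Lemma~\ref{L:semimgFV} under the changed measure $\QQ_B$: one must first invoke the fact that $M$ remains a semimartingale under $\QQ_B$ (a standard consequence of $\QQ_B\ll\QQ$), and then verify the hypotheses of the lemma for the constant stopping time $\tau\equiv t^*_\varepsilon$ under $\QQ_B$. Everything else follows by routine manipulations once vanishing of $[M,M]$ on $B$ has been established.
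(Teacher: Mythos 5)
Your proof is correct, and it takes a genuinely different route from the paper. The paper works entirely under the original measure $\QQ$: it introduces the stopping time $\tau=\inf\{t\colon \QQ(B\mid\Fcal_t)=0\}$ and shows that each $B_t=\{\tau>t\}$ is an atom of $\Fcal_t$, so that $M_t\bm 1_{B_t}=f(t)\bm 1_{B_t}$; it then applies Lemma~\ref{L:semimgFV} directly with this $\tau$ and concludes that $M^{\tau\wedge t^*}$ is a continuous local martingale of finite variation, hence constant. Your proof instead passes to the conditional measure $\QQ_B$, under which $M$ is $\QQ_B$-a.s.\ equal to the deterministic continuous path $\tilde f$ on $[0,t^*)$; this lets you invoke Lemma~\ref{L:semimgFV} with a deterministic time, and you close the argument via vanishing quadratic variation rather than via the finite-variation characterization. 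The two final steps are mathematically equivalent (a continuous local martingale of finite variation and one with zero bracket are both constant), so the essential difference is conditional-measure-change versus the construction of the atom family $(B_t)$. Your approach is perhaps conceptually more direct, at the modest cost of invoking preservation of the semimartingale property under the absolutely continuous change $\QQ_B\ll\QQ$; the paper's route avoids any change of measure and produces in passing the slightly stronger statement that $M$ is constant on $B_t\supseteq B$ up to time $t$, though neither proof needs that extra information. Both are complete and correct.
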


\begin{proof}
Consider the stopping time $\tau=\inf\{t\colon \QQ(B\mid\Fcal_t)=0\}$ as well as the events $B_t=\{\tau>t\}=\{\QQ(B\mid\Fcal_t)>0\}\supseteq B$ for $t<t^*$. We have $\tau=\infty$ on $B$, and we may suppose that $\QQ(B)>0$. Now, suppose for contradiction that $B_t = B_1\cup B_2$ for two disjoint non-nullsets $B_1,B_2\in\Fcal_t$. Then $B\subseteq B_1$ (possibly after relabeling) since $B$ is an atom, and thus $\QQ(B\mid\Fcal_t)=0$ on $B_2$, a contradiction. Hence $B_t$ is an atom of $\Fcal_t$, which implies $M_t\bm 1_{B_t} = f(t)\bm 1_{B_t}$ for some $f(t)\in\R$. Thus $M_t=f(t)$ for all $t<\tau\wedge t^*$, so Lemma~\ref{L:semimgFV} yields that $f$ is of finite variation on $[0,t^*]$. Thus $M^{\tau\wedge t^*}$ is a continuous local martingale of finite variation, and therefore constant. This completes the proof.
\end{proof}

\begin{cor} \label{C:S const on 0zeta(T)}
Let $\T$ be a full atomic tree and $M$ a continuous local martingale. Then $M$ is constant on $\lc0,\zeta(\T)\rc$.
\end{cor}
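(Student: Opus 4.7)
The plan is to reduce everything to Lemma~\ref{L:contMconst} by decomposing along the leaves of $\T$. Since $\T$ is full, its leaves $\{A_1,\ldots,A_d\}$ form a partition of $\Omega$ (up to a $\QQ$-nullset), and by the definition of $\zeta(\T)$ in~\eqref{eq:zeta(T)} we have $\zeta(\T)=t(A_i)$ on $A_i$. Consequently, proving that $M$ is constant on $\lc 0,\zeta(\T)\rc$ amounts to showing, for each leaf $A_i$, that $M_t=M_0$ on $A_i$ for every $t\le t(A_i)$.

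Fix a leaf $A=A_i$. By Definition~\ref{D:atomic tree}\ref{T:ext:1}, $A$ is a non-null atom of $\Fcal_{t(A)}$. If $t(A)=0$ there is nothing to prove on $A$. Otherwise, Lemma~\ref{L:contMconst} applied with $B=A$ and $t^*=t(A)$ yields $M_t=M_0$ on $A$ for all $t<t(A)$. Path-continuity of $M$ then extends the identity to $t=t(A)$ by passing to the limit $t\uparrow t(A)$.

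Patching together the finitely many leaves, we obtain $M_t(\omega)=M_0(\omega)$ for every $(\omega,t)\in\lc0,\zeta(\T)\rc$ outside a nullset, which is the claim. There is no substantive obstacle: the corollary is essentially an immediate packaging of Lemma~\ref{L:contMconst} with the partition-by-leaves property that comes for free from the fullness of $\T$ and the atomicity clause in Definition~\ref{D:atomic tree}. The only small point worth flagging is that Lemma~\ref{L:contMconst} gives the equality strictly before $t(A)$, and one must invoke continuity of $M$ to close the endpoint; this is the only place pathwise continuity (as opposed to, say, mere right-continuity) is actually used.
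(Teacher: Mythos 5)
Your proof is correct and follows essentially the same route as the paper: apply Lemma~\ref{L:contMconst} leaf by leaf to get constancy on $A\times[0,t(A))$, then use the partition-by-leaves (fullness) and path-continuity of $M$ to close the endpoint and cover $\lc 0,\zeta(\T)\rc$. Your remark about handling the trivial case $t(A)=0$ separately (since Lemma~\ref{L:contMconst} assumes $t^*>0$) is a small point the paper glosses over but is a reasonable thing to note.
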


\begin{proof}
Lemma~\ref{L:contMconst} implies that $M$ is constant on $A\times[0,t(A))$ for each leaf $A\in\T$. Thus $M$ is constant on $\lc0,\zeta(\T)\lc$, and the result follows by continuity of $M$.
\end{proof}

The following result is the key step toward proving the forward implication of Theorem~\ref{T:S cont char}. This is where the required full atomic tree is constructed. Once this has been done, it is straightforward to complete the proof of the theorem.

\begin{lem} \label{L:psi_rep}
Assume $S$ is continuous and semi-static completeness holds. Then there exists a full atomic tree $\T$ such that each $\psi_i$, $i=1,\ldots,n$, admits a representation
\[
\psi_i = \EE[\psi_i\mid\sigma(\T)] + (H^i\cdot S)_T
\]
for some $H^i\in L^2(S)$.
\end{lem}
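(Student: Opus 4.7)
The plan is to build the atomic tree $\T$ from the atomic data produced by Proposition~\ref{P:SM=spanM}, applied to the non-hedgeable parts of the $\psi_i$'s, and then to read off the desired representation.

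First, for each $i$ let $H^i \cdot S$ denote the $\Hcal^2$-orthogonal projection of the square-integrable martingale $\EE[\psi_i \mid \Fcal_\cdot]$ onto $\Scal(S)$, and set $V^i_t = \EE[\psi_i \mid \Fcal_t] - (H^i \cdot S)_t$. Each $V^i$ is then a square-integrable martingale with $V^i_0=0$ that is weakly, hence strongly, orthogonal to $S$. Mimicking the argument sketched at the end of Section~\ref{Sect:JY}, any $X \in \Scal(V^1,\ldots,V^n)$ is orthogonal to $\Scal(S)$, and by semi-static completeness can be written $X = a_0 + \sum_i a_i \psi_i + (K\cdot S)_T$; projecting onto $\Scal(S)^\perp$ kills the last term and replaces each $\psi_i$ by $V^i_T$, while $\EE[X]=0$ forces $a_0 = 0$. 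Combined with the reverse inclusion (immediate from the convention $(H\cdot V^i)_0 = H_0 V^i_0$), this gives
\[
\Scal(V^1,\ldots,V^n) = \vspan\{V^1,\ldots,V^n\}.
\]

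Next, Gram-Schmidt orthonormalize $V^1_T,\ldots,V^n_T$ to obtain a weakly orthonormal $d$-dimensional martingale $M = (M^1,\ldots,M^d)$ spanning the same subspace of $\Hcal^2$; since $\Scal(M) = \vspan\{M^1,\ldots,M^d\}$, Proposition~\ref{P:SM=spanM} applies and yields times $0 \le t_1 < \cdots < t_m \le T$, a rotation $N = QM$ with $N^{(k)} = N^{(k)}_T \bm 1_{\lc t_k,T\rc}$, and pairwise disjoint atoms $B^k_1,\ldots,B^k_{d_k}$ of $\Fcal_{t_k-}$ supporting $N^{(k)}_T$. The $B^k_j$'s nest naturally: since $B^{k'}_{j'}\in\Fcal_{t_{k'}-} \supseteq \Fcal_{t_k}$ for $k'>k$, atomicity forces $B^{k'}_{j'}$ to sit, up to nullsets, inside one $B^k_j$ or inside $\Omega \setminus \bigcup_j B^k_j$. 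Iteratively splitting, for each $k=1,\ldots,m$, every current leaf $A$ that meets some non-null $B^k_j$'s into those $B^k_j$'s together with $A\setminus\bigcup_j B^k_j$ yields a finite collection $\T$ satisfying Definition~\ref{D:atomic tree}; the adjoined complementary children guarantee fullness.

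For the representation, note that $V^i_T$ is by construction a linear combination of the $N^{(k),j}_T$'s; at any leaf of $\T$, the $N^{(k'),j'}_T$ with $k'$ larger than the leaf's depth vanish (since the leaf is disjoint from the corresponding $B^{k'}_{j'}$), while the earlier ones are $\Fcal_{t_{k'}}$-measurable with $t_{k'}\le\zeta(\T)$. Hence $V^i_T$ is $\Fcal_{\zeta(\T)} = \sigma(\T)$-measurable. Moreover, since $S$ is continuous, Corollary~\ref{C:S const on 0zeta(T)} gives that $S$ is constant on $\lc 0, \zeta(\T)\rc$, so $S_0=0$ and the integral convention force $(H^i\cdot S)_{\zeta(\T)} = 0$. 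Taking $\sigma(\T)$-conditional expectation in $\psi_i = (H^i\cdot S)_T + V^i_T$ then produces $\EE[\psi_i\mid\sigma(\T)] = V^i_T$, which rearranges to the claimed identity. The main obstacle is the tree-construction step: assembling the raw atoms $B^k_j$ into a full atomic tree in the sense of Definition~\ref{D:atomic tree} requires carefully verifying that each non-leaf node is an atom of the $\sigma$-algebra immediately preceding its children's appearance, a fact that rests on a delicate interplay between the atomic structure of the $V^i$'s and the continuity of $S$.
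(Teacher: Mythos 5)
Your setup — defining the non-hedgeable martingales $V^i$, establishing $\Scal(V^1,\ldots,V^n)=\vspan\{V^1,\ldots,V^n\}$, orthonormalizing to get $M$, and invoking Proposition~\ref{P:SM=spanM} — matches the paper exactly, and the final deduction (once $V^i_T\in L^2(\sigma(\T))$ is known, use Corollary~\ref{C:S const on 0zeta(T)} and optional stopping) is also correct. The gap is in the tree construction, and you partly sense it but misdiagnose what is actually missing.

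Your construction adjoins the atoms $B^k_j\in\Fcal_{t_k-}$ (and their complements) as nodes. This cannot work, for two reasons. First, $N^{(k)}_T$ is $\Fcal_{t_k}$-measurable, while $B^k_j$ is only an atom of $\Fcal_{t_k-}$; so $N^{(k)}_T$ need not be constant on $B^k_j$, and if your leaves are $B^k_j$'s (hence $t(A)<t_k$ on them), then $\Fcal_{\zeta(\T)}$ does not capture the $\Fcal_{t_k}$-information that $N^{(k)}_T$ carries. Consequently $V^i_T$ is \emph{not} $\sigma(\T)$-measurable for the tree you build, and the concluding measurability claim fails. Second, and more fundamentally, the splitting you describe (a leaf $A$ into the non-null $B^k_j$'s it meets plus $A\setminus\bigcup_j B^k_j$) silently assumes a clean nesting that must itself be proved. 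The paper's proof of Lemma~\ref{L:psi_rep} establishes the much stronger fact that each non-null $B^k_j$ \emph{is} an existing leaf $A$ up to nullsets (equation~\eqref{eq:A=Bk1}), so no ``complementary child'' ever arises. Proving this identity is where semi-static completeness and continuity of $S$ enter decisively: the paper expands $\bm 1_{B^k_1}$ semi-statically, takes $\Fcal_{t_k-}$-conditional expectations, uses Lemma~\ref{L:contMconst} to kill $(H\cdot S)_{t_k}$ on $B^k_1$, and compares constants to force $\QQ(B^k_1)=\QQ(A)$. Only after this does the paper refine $A=B^k_j$ into \emph{finitely many} non-null atoms $A_{ij}$ of $\Fcal_{t_k}$ — finiteness being a consequence of \eqref{eq:L2SN1Nd} — and these $A_{ij}$'s, not the $B^k_j$'s, become the new leaves. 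Fullness holds precisely because $A=B^k_j$ is an atom of $\Fcal_{t_k-}=\Fcal_{t(A_{ij})-}$. Your proposal skips both the identification $B^k_j=A$ and the passage from $\Fcal_{t_k-}$-atoms to $\Fcal_{t_k}$-atoms, which are the two essential steps of the construction.
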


\begin{proof}
If dynamic completeness holds, the result is clearly true with $\T=\{\Omega\}$. We therefore suppose that dynamic completeness fails. For each $i=1,\ldots,n$, let $H^i\cdot S$ be the orthogonal projection of the martingale $\EE[\psi_i\mid\Fcal_t]$ onto $\Scal(S)$ and define the martingale $V=(V^1,\ldots,V^n)$ by
\[
V^i_T = \psi_i - (H^i\cdot S)_T, \qquad i=1,\ldots,n.
\]
Suppose we can find a full atomic tree $\T$ such that
\begin{equation} \label{eq:ViTsTmb}
\text{$V^i_T$ is $\sigma(\T)$-measurable for $i=1,\ldots,n$.}
\end{equation}
Then, since $S$ is constant on $\lc0,\zeta(\T)\rc$ by Corollary~\ref{C:S const on 0zeta(T)}, we have $\EE[(H^i\cdot S)_T\mid\Fcal_{\zeta(\T)}]=0$. Since also $\sigma(\T)=\Fcal_{\zeta(\T)}$ up to nullsets, we deduce
\[
\psi_i - (H^i\cdot S)_T = \EE[\psi_i \mid\sigma(\T)],
\]
which is the required conclusion.

We thus only need to find a full atomic tree $\T$ such that~\eqref{eq:ViTsTmb} holds. To this end, note that each $V^i$ is weakly orthogonal to $\Scal(S)$, and hence also strongly orthogonal; see e.g.~Theorem~VIII.49 in \cite{DM80}. Together with semi-static completeness and the fact that $\EE[V^i_T]=0$, this yields
\[
\vspan\{V^1,\ldots,V^n\} = \Scal(S)^\perp \supseteq \Scal(V),
\]
where $\Scal(S)^\perp$ denotes the weak orthogonal complement of $\Scal(S)$. Since $\vspan\{V^1,\ldots,V^n\}\subseteq\Scal(V)$ holds trivially, we actually have equality, and this suggests that Proposition~\ref{P:SM=spanM} should be used. To prepare for this, choose a (weakly) orthonormal martingale $M=(M^1,\ldots,M^d)$ with $\vspan\{M^1,\ldots,M^d\}=\vspan\{V^1,\ldots,V^n\}$. Here $d=\dim\vspan\{V^1,\ldots,V^n\}$, and we have $d\ge1$ since we assumed that dynamic completeness does not hold. The martingale $M$ inherits the property
\[
\Scal(M) = \vspan\{M^1,\ldots,M^d\}.
\]

Proposition~\ref{P:SM=spanM} now yields $Q\in O(d)$ and $0\leq t_1<\cdots<t_m\le T$, $m\ge1$, such that the martingale $N=QM$ satisfies \eqref{eq:NiNiT}--\eqref{eq:Ni_atoms} for some atoms $B^k_1,\ldots,B^k_{d_k}$ of $\Fcal_{t_k-}$, with $d_1+\cdots+d_m=d$. Since $V$, $M$, and $N$ are related by (deterministic) invertible linear transformations, \eqref{eq:ViTsTmb} is equivalent to
\begin{equation} \label{eq:NiTsTmb}
\text{$N^i_T$ is $\sigma(\T)$-measurable for $i=1,\ldots,d$,}
\end{equation}
and semi-static completeness means that we have
\begin{equation} \label{eq:L2SN1Nd}
L^2(\Fcal_T)= \vspan\{1,N^1,\ldots,N^d\}\oplus\Scal(S).
\end{equation}

We now construct $\T$ inductively. Set $k=1$ and $\T=\{\Omega\}$. With $N^{(0)}=0$, it is clear that the pair $(k,\T)$ satisfies the following induction hypothesis:
\begin{equation} \label{eq:ind hyp key lemma}
\begin{array}{l}
\text{$\T$ is full,}\\
\text{$\Fcal_{t(A)}\subseteq\Fcal_{t_k-}$ for all $A\in\T$,}\\
\text{$N^{(l)}$ is $\sigma(\T)$-measurable for $l<k$.}
\end{array}
\end{equation}

Assume now that the pair $(k,\T)$ satisfies~\eqref{eq:ind hyp key lemma}. The induction step proceeds as follows. First, the case $k=1$, $t_1=0$ requires special treatment. In this case, we simply re-define $\T$ to consist of those events among the $\Fcal_0$-measurable atoms $B^1_1,\ldots,B^1_{d_1},\Omega\setminus(B^1_1\cup\ldots\cup B^1_{d_1})$ that are non-null. If $m=1$, we are done: \eqref{eq:NiTsTmb} holds. Otherwise, set $k=2$, note that \eqref{eq:ind hyp key lemma} is satisfied for the new pair $(\T,k)$, and proceed with the induction.

Next, consider the case $t_k>0$. Equation \eqref{eq:Ni_atoms} says that $\{\QQ(N^{(k)}_T\ne0\mid\Fcal_{t_k-})>0\}=B^k_1\cup\cdots\cup B^k_{d_k}$. Consider $B^k_1$. Either $\QQ(B^k_1)=0$, in which case we ignore it, or $\QQ(B^k_1)>0$. In the latter case, since $\T$ is full, we can find a leaf $A$ with $\QQ(A\cap B^k_1)>0$. We now show that then, in fact,
\begin{equation} \label{eq:A=Bk1}
\text{$A=B^k_1$ up to a nullset.}
\end{equation}
To this end, first observe that $B^k_1\subseteq A$ since $B^k_1$ is an atom and $A\in\Fcal_{t(A)}\subseteq\Fcal_{t_k}$ by~\eqref{eq:ind hyp key lemma}. Next, using~\eqref{eq:L2SN1Nd} and taking $\Fcal_{t_k-}$-conditional expectations while keeping in mind~\eqref{eq:NiNiT}, we obtain
\begin{equation} \label{eq:1Bk1etc}
\bm 1_{B^k_1} = \QQ(B^k_1) + (H\cdot S)_{t_k} + \sum_{l<k} a_l^\top N^{(l)}_T
\end{equation}
for some $H\in L^2(S)$ and some $a_l\in\R^{d_l}$, $l<k$. Since $A$ is a leaf of $\sigma(\T)$, and using the induction hypothesis~\eqref{eq:ind hyp key lemma}, we find
\[
\sum_{l<k} a_l^\top N^{(l)}_T = c \quad \text{on}\quad A
\]
for some constant $c$. Moreover, Lemma~\ref{L:contMconst} implies that $(H\cdot S)_{t_k}=0$ on $B^k_1$. Inspecting~\eqref{eq:1Bk1etc} on the event $B^k_1$ we thus obtain
\[
\QQ(B^k_1) + c = 1.
\]
Next, another application of Lemma~\ref{L:contMconst} yields
\[
\bm 1_A(H\cdot S)_{t_k} = \bm 1_A(H\bm 1_{\rc t(A),t_k\rc}\cdot S)_{t_k} = (K\cdot S)_{t_k},
\]
where $K=H\bm 1_A \bm 1_{\rc t(A),t_k\rc}\in L^2(S)$. Thus, multiplying both sides of~\eqref{eq:1Bk1etc} by $\bm 1_A$ we get
\[
\bm 1_{B^k_1} = \bm 1_A\left( \QQ(B^k_1) + c \right) + (K\cdot S)_{t_k} = \bm 1_A + (K\cdot S)_{t_k}.
\]
Taking expectations yields $\QQ(B^k_1)=\QQ(A)$. Together with the fact that $B^k_1\subseteq A$, this proves~\eqref{eq:A=Bk1}.

Repeating this for each $B^k_i$, we identify events $A_1,\ldots,A_p$ that are leaves of $\T$ and atoms of $\Fcal_{t_k-}$ and satisfy
\begin{equation} \label{eq:Nkn0A1Ap}
\{\QQ(N^{(k)}_T\ne0\mid\Fcal_{t_k-})>0\}=A_1\cup\cdots\cup A_p.
\end{equation}
On this set we have $S_t=S_0$ for $t\le t_k$ due to Lemma~\ref{L:contMconst}. Together with~\eqref{eq:L2SN1Nd} and in view of~\eqref{eq:NiNiT}, this implies that the linear space
\[
\{X\in L^2(\Fcal_{t_k})\colon X=0 \text{ outside } A_1\cup\cdots\cup A_p\}
\]
is spanned by $\{N^{(l),1},\ldots,N^{(l),d_l}\colon l=1,\ldots,k\}$ together with $\bm 1_{A_1\cup\cdots\cup A_p}$. In particular, it is finite-dimensional. Thus each set $A_i$ can be decomposed into finitely many non-null atoms of $\Fcal_{t_k}$, which we denote by $A_{ij}$, $j=1,\ldots,n_i$, that satisfy
\[
\sigma(N^{(k)}) \subseteq \sigma(A_{ij}\colon i=1,\ldots,p,\, j=1,\ldots, n_i)
\]
up to nullsets. Moreover, since $N^{(k)}$ is a martingale and not identically zero on $A_i$ due to~\eqref{eq:Nkn0A1Ap}, and since $t_k>0$, we have $n_i\ge 2$ for each~$i$. Define the atomic tree
\[
\T'=\T\cup\{A_{ij}\colon i=1,\ldots,p,\, j=1,\ldots, n_i\}.
\]
The above observations together with the induction hypothesis~\eqref{eq:ind hyp key lemma} show that $\T'$ is again full, $N^{(1)},\ldots,N^{(k)}$ are $\sigma(\T')$-measurable, and $t(A)<t_{k+1}$ for all $A\in\T'$ (setting $t_{m+1}=\infty$). Now replace $\T$ by $\T'$. If $k=m$, we are done: \eqref{eq:NiTsTmb} holds. Otherwise, we replace $k$ by $k+1$, observe that~\eqref{eq:ind hyp key lemma} is satisfied for the new pair $(k,\T)$, and iterate. The procedure ends after $m$ steps. The proof is complete.
\end{proof}

We can now complete the proof of Theorem~\ref{T:S cont char}.

\begin{proof}[Proof of Theorem~\ref{T:S cont char}: necessity]
Let $\T$ be the full atomic tree given by Lemma~\ref{L:psi_rep}. We first prove~\ref{T:S cont char:1}. Let $A$ be any leaf of $\T$, and consider an arbitrary $X\in L^2(\Fcal_T)$. By semi-static completeness,
\[
X = a_0 + \sum_{i=1}^n a_i\psi_i + (H\cdot S)_T
\]
for some constants $a_0,\ldots,a_n$ and some $H\in L^2(S)$. By Lemma~\ref{L:psi_rep} we have $H^1,\ldots,H^n$ in $L^2(S)$ such that
\begin{equation} \label{eq:XxKSA_0}
X = a_0 + \sum_{i=1}^n a_i\EE[\psi_i\mid\sigma(\T)] + (K\cdot S)_T,
\end{equation}
where $K=H+H^1+\cdots+H^n$. Since any $\sigma(\T)$-measurable random variable is constant on $A$, we have
\begin{equation} \label{eq:XxKSA}
X = x + (K\cdot S)_T \quad\text{on}\quad A
\end{equation}
for some $x\in\R$. Finally, Lemma~\ref{L:contMconst} shows that $S$ is constant on $A\times[0,t(A)]$. We may thus replace $K$ by $K\bm 1_{\rc t(A),T\rc}$ without invalidating~\eqref{eq:XxKSA}, and conclude that $S$ is complete on $A\times[t(A),T]$. This proves~\ref{T:S cont char:1}.

We now prove~\ref{T:S cont char:2}. Again, let $A$ be any leaf of $\T$. With $X=\bm 1_A$, \eqref{eq:XxKSA_0} yields
\[
\bm 1_A = a_0 + \sum_{i=1}^n a_i\EE[\psi_i\mid\sigma(\T)] + (K\cdot S)_T,
\]
where, by Corollary~\ref{C:S const on 0zeta(T)}, we have $(K\cdot S)_{\zeta(\T)}=0$. The optional stopping theorem and the fact that $\sigma(\T)=\Fcal_{\zeta(\T)}$ up to nullsets then yield
\[
\bm 1_A = a_0 + \sum_{i=1}^n a_i\EE[\psi_i\mid\sigma(\T)].
\]
We conclude that $\{\EE[\psi_i\mid\sigma(\T)] \colon i=1,\ldots,n\}$ together with the constant $1$ span the $(\dim\T)$-dimensional space $L^2(\sigma(\T))$, and since $\EE[\psi_i]=0$ for all $i$ the former set must contain $\dim\T - 1$ linearly independent elements. This proves~\ref{T:S cont char:2}.
\end{proof}

\section{A Jeulin-Yor theorem}

In this section we state and prove a slight generalization of the classical Jeulin-Yor theorem from the theory of progressive enlargement of filtrations; see \cite{JY78a} and \cite{GZ08}, among others. This result is needed in Section~\ref{S:filter}.

Recall that we work on a given filtered measurable space $(\Omega,\Fcal,\FF)$ whose filtration $\FF$ is right-continuous. Let $\tau$ be a random time and $X$ a nonnegative bounded random variable such that $\tau=\infty$ on $\{X=0\}$. Denote by $\HH$ the filtration generated by the single-jump process $X\bm 1_{\lc \tau,T\rc}$. Define $\GG$ as the progressive enlargement of $\FF$ with $\HH$; see~\eqref{eq:GFH}. Then in particular,
\begin{equation}\label{eq.FcaltGetc}
\Fcal_t\cap\{\tau>t\} = \Gcal_t\cap\{\tau>t\} \quad\text{for all $t\in[0,T]$},
\end{equation}
see Lemma~2.5 in~\cite{KLP13}. By (the proof of) Lemma~1 in \cite{JY78a} this implies that for any $\GG$-predictable process $H$ there exists an $\FF$-predictable process $J$ such that
\begin{equation}\label{eq:A:H=J}
J\bm 1_{\lc0,\tau\rc}=H\bm 1_{\lc0,\tau\rc}.
\end{equation}
Next, fix any probability measure $\QQ$ on $\Gcal_T$. Let $Z$ denote the right-continuous supermartingale associated with~$\tau$ by Az\'ema \cite{Az72} via
\begin{equation}\label{eq:A:Azema}
Z_t = \QQ(\tau>t\mid\Fcal_t),
\end{equation}
and let $A$ denote the dual predictable projection of the bounded process $X\bm 1_{\lc\tau,\infty\lc}$; see Theorem~12 in Appendix~I of~\cite{DM80}. Note that the usual conditions are not assumed and not needed here. By Lemma~A.10 in \cite{GZ08}, whose proof still goes through in our setting, we have $Z_->0$ except on a $dA$-nullset.

\begin{lem} \label{L:JeulinYor}
The process $M$ given by
\begin{equation*}\label{eq:L:JeulinYor}
M_t = X \bm 1_{\{\tau\leq t\}}-\int_0^{t\wedge\tau}\frac1{Z_{s-}}dA_s
\end{equation*}
is a $\GG$-martingale.
\end{lem}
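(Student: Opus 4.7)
I would verify the martingale property directly: that $M$ is $\GG$-adapted and integrable, and that $\EE[(M_t-M_s)\bm 1_B]=0$ for every $0\le s<t\le T$ and every $B\in\Gcal_s$. Adaptedness is immediate, and integrability will follow from Step~1 below. The whole strategy rests on two classical ingredients: (i) the defining property of the dual predictable projection $A$ of $X\bm 1_{\lc\tau,\infty\lc}$, namely $\EE[\int J\, d(X\bm 1_{\lc\tau,\infty\lc})] = \EE[\int J\, dA]$ for any bounded $\FF$-predictable $J$; and (ii) the identity ${}^p(\bm 1_{\{u\le\tau\}}) = Z_{u-}$ for the $\FF$-predictable projection, which together with the fact recalled just above the lemma that $\{Z_-=0\}$ is $dA$-null gives $\EE[\int K \bm 1_{\{u\le\tau\}} dA_u] = \EE[\int K Z_{u-} dA_u]$ for bounded $\FF$-predictable $K$.

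\textbf{Step 1: integrability.} The first term of $M$ is bounded by $\|X\|_\infty$. Writing the second term as $\int_{[0,\infty)} \bm 1_{\{u\le\tau\}} Z_{u-}^{-1} dA_u$ and applying ingredient (ii) gives $\EE[\int_0^\tau Z_{u-}^{-1} dA_u] = \EE[A_\infty] \le \|X\|_\infty$, so $M$ is integrable.

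\textbf{Step 2: reduction.} Fix $s<t$ and $B\in\Gcal_s$. On $\{\tau\le s\}$ both $X\bm 1_{\{\tau\le t\}}$ and $\int_0^{t\wedge\tau} Z_{u-}^{-1}dA_u$ have already been frozen, so $M_t=M_s$ there. Thus only $B\cap\{\tau>s\}$ contributes, and by \eqref{eq.FcaltGetc} we may write $B\cap\{\tau>s\}=F\cap\{\tau>s\}$ for some $F\in\Fcal_s$. On $\{\tau>s\}$ we have
\[
M_t - M_s = X\bm 1_{\{s<\tau\le t\}} - \int_{(s,t\wedge\tau]}\frac{1}{Z_{u-}}dA_u.
\]

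\textbf{Step 3: matching the two pieces.} For the jump part, apply ingredient (i) with $J=\bm 1_F\bm 1_{\rc s,t\rc}$ (which is $\FF$-predictable because $F\in\Fcal_s$):
\[
\EE\bigl[\bm 1_F\, X\bm 1_{\{s<\tau\le t\}}\bigr] = \EE\bigl[\bm 1_F(A_t - A_s)\bigr].
\]
For the compensator part, rewrite
\[
\bm 1_{\{\tau>s\}}\int_{(s,t\wedge\tau]}\frac{1}{Z_{u-}}dA_u = \int_{(s,t]} \bm 1_{\{u\le\tau\}}\frac{1}{Z_{u-}} dA_u,
\]
and apply ingredient (ii) with $K=\bm 1_F\bm 1_{\rc s,t\rc} Z_{u-}^{-1}$, which is $\FF$-predictable and well-defined up to a $dA$-null set by the assumption on $Z_-$:
\[
\EE\Bigl[\bm 1_F\bm 1_{\{\tau>s\}}\int_{(s,t\wedge\tau]}\frac{1}{Z_{u-}}dA_u\Bigr] = \EE\Bigl[\int_{(s,t]}\bm 1_F\, Z_{u-}\cdot\frac{1}{Z_{u-}}dA_u\Bigr] = \EE\bigl[\bm 1_F(A_t-A_s)\bigr].
\]
The two terms cancel, yielding $\EE[(M_t-M_s)\bm 1_B]=0$ and completing the proof.

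\textbf{Main obstacle.} The only delicate point is the predictable projection identity ${}^p(\bm 1_{\{u\le\tau\}})=Z_{u-}$ combined with the possible vanishing of $Z_-$; this is handled by the fact, recalled just before the lemma, that $\{Z_-=0\}$ is a $dA$-nullset, which makes $Z_{u-}^{-1}$ meaningful inside the integrals and renders the cancellation in Step~3 clean. Since the usual conditions are not imposed on $\FF$, one should cite the version of these results valid in this setting (as in the references before the lemma), but no new argument is needed beyond the classical Jeulin-Yor computation adapted to the factor $X$.
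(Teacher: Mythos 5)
Your proof is correct and follows essentially the same route as the paper's. The paper verifies $\EE[\int_0^\infty H_t\,dM_t]=0$ for an arbitrary bounded $\GG$-predictable $H$, using the $\FF$-predictable replacement $J$ from \eqref{eq:A:H=J}; you specialize to the generating test processes $H=\bm 1_B\bm 1_{\rc s,t\rc}$ with $B\in\Gcal_s$, but the two underlying ingredients --- the dual predictable projection of $X\bm 1_{\lc\tau,\infty\lc}$ and the identity ${}^p(\bm 1_{\lc0,\tau\rc})=Z_-$ combined with $\{Z_-=0\}$ being $dA$-null --- are exactly the same, so the arguments coincide up to this cosmetic difference.
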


\begin{proof}
We follow the proof of the Jeulin-Yor theorem given by \cite{GZ08}. Define $N=X\bm 1_{\lc\tau,\infty\lc}$, let $H$ be any bounded $\GG$-predictable process, and let $J$ be an $\FF$-predictable process satisfying~\eqref{eq:A:H=J}. Note that $Z_-$ coincides with the predictable projection of $\bm 1_{\lc0,\tau\rc}$ by the same argument as in the proof of Theorem~1.1 in \cite{GZ08}. Using also the definition of $A$, we then obtain
\begin{align*}
\EE\left[ \int_0^\infty H_t dN_t \right] &= \EE\left[ H_\tau X\bm 1_{\{\tau<\infty\}} \right] = \EE\left[ J_\tau X\bm 1_{\{\tau<\infty\}} \right] = \EE\left[ \int_0^\infty J_t dN_t \right] = \EE\left[ \int_0^\infty J_t dA_t \right] \\
&= \EE\left[ \int_0^\infty J_t Z_{t-}\frac{dA_t}{Z_{t-}}  \right] = \EE\left[ \int_0^\infty J_t \bm 1_{\lc0,\tau\rc}\frac{dA_t}{Z_{t-}} \right] = \EE\left[ \int_0^\infty H_t \bm 1_{\lc0,\tau\rc}\frac{dA_t}{Z_{t-}} \right].
\end{align*}
This proves the lemma.
\end{proof}

\bibliography{Acciaio_Larsson}{}
\bibliographystyle{alpha}

\end{document}